\setlist[enumerate]{label=\emph{(\roman*)}}
\newtheorem{theorem}{Theorem}[section]
\newtheorem{lemma}[theorem]{Lemma}
\theoremstyle{definition}
\newtheorem{remark}[theorem]{Remark}
\numberwithin{equation}{section}
\newcommand{\R}{\mathbb{R}}
\def\E{\dot{H}^{1}\times L^{2}}
\def\pshbb1#1#2{\left(#1,#2\right)_{\dot H^1_{{{\ell}}_1}}}
\def\RR{\mathbb{R}}
\def\E{\mathcal{H}}
\def\px{\partial_{x}}
\def\pt{\partial_{t}}
\def \vp{\varphi_{1}}
\def \vpp{\varphi_{2}}
\def \d {\,\mathrm{d}}
\begin{document}

\parindent=0pt

\title[conditional stability for 1D NLKG equation]{Conditional stability of multi-solitons for the 1D NLKG equation with double power nonlinearity}
\author{XU YUAN}

\address{CMLS, \'Ecole polytechnique, CNRS, Institut Polytechnique de Paris, F-91128 Palaiseau Cedex, France.}

\email{xu.yuan@polytechnique.edu}
\begin{abstract}
We consider the one-dimensional nonlinear Klein-Gordon equation with a double power focusing-defocusing nonlinearity
\begin{equation*}
\partial_{t}^{2}u-\partial_{x}^{2}u+u-|u|^{p-1}u+|u|^{q-1}u=0,\quad \mbox{on}\ [0,\infty)\times \RR,
\end{equation*}
where $1<q<p<\infty$. The main result
states the stability in the energy space $H^{1}(\RR)\times L^{2}(\RR)$ of the sums of decoupled solitary waves with different speeds, up to the natural instabilities.
The proof is inspired by the techniques developed for the generalized Korteweg-de Vries
equation and the nonlinear Schr\"odinger equation in a similar context by Martel, Merle and Tsai~\cite{MMTgkdv,MMTSchor}.
However, the adaptation of this strategy to a wave-type equation
requires the introduction of a new energy functional adapted to the Lorentz transform.
\end{abstract}
\maketitle
\section{Introduction}
\subsection{Main result}
We consider the one-dimensional nonlinear Klein-Gordon equation (NLKG) with a double power nonlinearity
\begin{equation}\label{NLKG}
\left\{ \begin{aligned}
&\partial_t^2 u - \partial_{x}^{2} u+u - |u|^{p-1}u+|u|^{q-1}u = 0, \quad (t,x)\in [0,\infty)\times \RR,\\
& u_{|t=0} = u_0\in H^1,\quad 
\partial_t u_{|t=0} = u_1\in L^2,
\end{aligned}\right.
\end{equation}
where $1<q<p<\infty$. This equation also rewrites as a first order system in time for the function $\vec{u}=(u_{1},u_{2})$,
\begin{equation}\label{NLKGvec}
\left\{
\begin{aligned}
&\pt u_{1}=u_{2}\\
&\pt u_{2}=\px ^{2} u_{1}-u_{1}+f(u_{1}),
\end{aligned}\right.
\end{equation}
where 
\begin{equation}\label{nonlin}
f(u_{1})=|u_{1}|^{p-1}u_{1}-|u_{1}|^{q-1}u_{1}.
\end{equation}
We recall that the Cauchy problem for equation~\eqref{NLKGvec}
is locally well-posed in the energy space $H^{1}\times L^{2}$. See \emph{e.g.}~\cite{GVKG}. Denote
$F(u_{1})=\frac{1}{p+1}|u_{1}|^{p+1}-\frac{1}{q+1}|u_{1}|^{q+1}$.
For any solution $\vec{u}=\left(u_{1},u_{2}\right)$ of \eqref{NLKGvec} in $H^{1}\times L^{2}$, the energy $E(\vec{u})$ and momentum $\mathcal{I}(\vec{u})$ are conserved, where
\begin{equation*}
E(\vec{u})=\int_{\RR}\left\{(\px u_{1})^{2}+u_{1}^{2}+u_{2}^{2}-2F(u_{1})\right\}\d x,\quad 
\mathcal{I}(\vec{u})=2\int_{\RR}(\px u_{1})u_{2}\d x.
\end{equation*}
Denote by $Q$ the \emph{ground state}, which is the unique positive even solution in $H^1$ of the equation
\begin{equation}\label{eq:Q}
Q''-Q+f(Q)=0\quad \mbox{on}\ \ \RR.
\end{equation}
The existence and properties of this solution are studied in~\cite[Section 6]{BL} (see also Remark 1.4 and Section~\ref{S:2.1}).
The ground state generates the standing wave solution $\vec{Q}=(Q,0)$ of~\eqref{NLKGvec}. 
Moreover, using the Lorentz transformation on $\vec Q$, one obtains traveling solitary waves or \emph{solitons}: for $\ell\in \RR$, with
$-1<\ell<1$, let 
\begin{equation*}
Q_{\ell}(x)=Q\left(\frac{x}{\sqrt{1-\ell^{2}}}\right),\quad 
\vec{Q}_{\ell}=\left(\begin{array}{c} Q_{\ell} \\
-\ell\px Q_{\ell}
\end{array}\right),
\end{equation*}
then $\vec{u}(t,x)=\vec{Q}_{\ell}(x-\ell t)$ is a solution of~\eqref{NLKGvec}.

It is well-known (see precise statements and references in~\S\ref{S:2.1}) that the operator
\begin{equation*}
\mathcal{L}=-\partial_{x}^{2}+1-f'(Q)
\end{equation*}
appearing after linearization of equation~\eqref{NLKGvec} around $\vec{Q}=(Q,0)$, has a unique negative 
eigenvalue $-\nu_{0}^{2}$ $( \nu_{0}>0)$, with corresponding smooth even 
eigenfunction~$Y$. 
Set
\begin{equation*}
\vec{Y}^{+}=\left(\begin{array}{c}
Y\\ \nu_{0}Y
\end{array}\right)\quad \mbox{and}\quad 
\vec{Z}^{+}=\left(\begin{array}{c}
\nu_{0}Y\\ Y
\end{array}\right).
\end{equation*}
From explicit computations, the function $\vec{u}^{+}(t,x)=\exp(\nu_{0}t)\vec{Y}^{+}(x)$
is solution of the linearized system
\begin{equation}\label{NLKGlin}
\left\{
\begin{aligned}
&\pt u_{1}=u_{2}\\
&\pt u_{2}=-\mathcal{L}u_{1}.
\end{aligned}\right.
\end{equation}
Since $\nu_{0}>0$, the solution $\vec{u}^{+}$ illustrates the (one-dimensional) exponential instability of the solitary wave $\vec{Q}$ in positive time. An equivalent formulation of instability is obtained by observing that for any solution $\vec{u}$ of~\eqref{NLKGlin}, it holds
\begin{equation*}
\frac{\d }{\d t}a^{+}=\nu_{0}a^{+}\quad \mbox{where}\ \ a^{+}(t)=\left(\vec{u}(t),\vec{Z}^{+}\right)_{L^{2}}.
\end{equation*}
More generally, for $-1<\ell<1$, set
\begin{equation*}
Y_{\ell}=Y\left(\frac{x}{\sqrt{1-\ell^{2}}}\right)\quad \mbox{and}\quad 
\vec{Z}^{+}_{\ell}=\left(
\begin{array}{c}(\ell\partial_{x}Y_{\ell}+ \frac{{\nu_{0}}}{\sqrt{1-\ell^{2}}}Y_{\ell})e^{ \frac{\ell{\nu_{0}}}{\sqrt{1-\ell^{2}}}x}\\
Y_{\ell}e^{ \frac{ \ell{\nu_{0}}}{\sqrt{1-\ell^{2}}}x}
\end{array}\right).
\end{equation*}
The main purpose of this article is to study the conditional stability of multi-solitons with different speeds for~\eqref{NLKG}.
More precisely, the main result is the following.
\begin{theorem}\label{main:theo}
Let $N\ge 2$. For all $n\in \{1,\ldots, N\}$, let $\sigma_{n}=\pm 1$, $\ell_{n}\in (-1,1)$ with
$-1<\ell_{1}<\ell_{2}<\cdots<\ell_{N}<1$. There exist $L_{0}>0$, $C_{0}>0$, $\gamma_{0}>0$ and $\delta_{0}>0$ such that the following is true.
Let $\vec{\varepsilon}\in H^{1}\times L^{2}$ and $y_{1}^{0}<\cdots<y_{N}^{0}$ be such that there exist $L>L_{0}$ and $0<\delta<\delta_{0}$
with
\begin{equation*}
\|\vec{\varepsilon}\|_{H^{1}\times L^{2}}<\delta\quad \mbox{and}\quad y_{n+1}^{0}-y_{n}^{0}>L \quad \mbox{for all $n=1,\ldots,N-1$}.
\end{equation*}
Then, there exist $h^{+}_{1},\ldots,h_{N}^{+}\in \mathbb{R}$ satisfying
\begin{equation*}
\sum_{n=1}^{N}|h^{+}_{n}|\le C_{0} \big(\delta+e^{-\gamma_{0}L}\big),
\end{equation*}
such that the solution $\vec{u}=(u_{1},u_{2})$ of~\eqref{NLKGvec} with initial data
\begin{equation*}
\vec{u}_{0}=\sum_{n=1}^{N}\big(\sigma_{n}\vec{Q}_{\ell_{n}}+h^{+}_{n}\vec{Z}^{+}_{\ell_{n}}
\big)(\cdot-y_{n}^{0})+\vec{\varepsilon}
\end{equation*}
is globally defined in $H^{1}\times L^{2}$ for $t\ge 0$ and, for all $t\ge 0$,
\begin{equation*}
\bigg\|\vec{u}(t)-\sum_{n=1}^{N}\sigma_{n}\vec{Q}_{\ell_{n}}(\cdot-y_{n}(t))\bigg\|_{H^{1}\times L^{2}}\le C_{0}\big(\delta+e^{-\gamma_{0}L}\big),
\end{equation*}
where $y_{1}(t),\ldots,y_{N}(t)$ are $C^{1}$ functions 
satisfying, for all $n=1,\ldots, N$, $t\ge 0$,
\begin{equation*}
|y_{n}(0)-y_{n}^{0}|\le C_{0}\big(\delta+e^{-\gamma_{0}L}\big),\quad |\dot{y}_{n}(t)-\ell_{n}|\le C^{2}_{0}\big(\delta+e^{-\gamma_{0}L}\big).
\end{equation*}
\end{theorem}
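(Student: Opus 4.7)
The overall plan is to combine soliton modulation with a Lorentz-adapted Lyapunov functional, and to close the argument on the $N$ unstable directions by a shooting/Brouwer argument, following the general framework of Martel--Merle--Tsai adapted to the wave-type setting. On a maximal bootstrap interval $[0,T^{\ast})$ on which $\vec u(t)$ stays close to a separated $N$-soliton configuration, the implicit function theorem produces $C^{1}$ functions $y_{1}(t)<\cdots<y_{N}(t)$ so that the remainder
\[
\vec\varepsilon(t,x)=\vec u(t,x)-\sum_{n=1}^{N}\sigma_{n}\vec Q_{\ell_{n}}(x-y_{n}(t))
\]
satisfies suitable orthogonality conditions against $\px\vec Q_{\ell_n}(\cdot-y_n(t))$ that decouple the translation mode of each soliton; differentiating yields $|\dot y_n-\ell_n|\lesssim \|\vec\varepsilon\|_{\E}+e^{-\gamma_0 L}$. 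Writing $\vec Z^{-}_{\ell_n}$ for the decaying companion of $\vec Z^{+}_{\ell_n}$ (the eigenfunction for $-\nu_0$) and $a_n^{\pm}(t)=\psl{\vec\varepsilon(t)}{\vec Z^{\pm}_{\ell_n}(\cdot-y_n(t))}$, the linearized equation and the eigenrelation at the Lorentz-boosted soliton yield
\[
\dot a_n^{\pm}=\pm\nu_n\,a_n^{\pm}+O\bigl(\|\vec\varepsilon\|_{\E}^{2}+e^{-\gamma_0 L}\bigr),\qquad \nu_n:=\nu_0/\sqrt{1-\ell_n^{2}}>0,
\]
and the initial values $a_n^{+}(0)$ depend affinely and surjectively on the free parameters $(h_1^{+},\ldots,h_N^{+})$.

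The heart of the argument is the construction of a coercive, almost-conserved energy. Let $\Phi_n(t,x)$ be smooth cutoffs forming a partition of unity that travel at the intermediate speeds $(\ell_{n-1}+\ell_n)/2$ and $(\ell_n+\ell_{n+1})/2$ at the left/right boundaries of the $n$-th soliton region; define
\[
\mathcal F(t)=\sum_{n=1}^{N}\int_{\RR}\Phi_n\bigl\{(\px u_{1})^{2}+u_{1}^{2}+u_{2}^{2}-2F(u_{1})\bigr\}\d x+2\sum_{n=1}^{N}\ell_n\int_{\RR}\Phi_n\,(\px u_{1})u_{2}\,\d x.
\]
The key Lorentz ingredient is the localized momentum with coefficient $\ell_n$: this reproduces the Lorentz pullback of the Hamiltonian at the $n$-th soliton, and the associated quadratic form around $\sigma_n\vec Q_{\ell_n}$ inherits from the coercivity of $\mathcal L$ on $\{Y,Q'\}^{\perp}$ a coercivity bound on $\{\vec Z^{+}_{\ell_n},\vec Z^{-}_{\ell_n},\px\vec Q_{\ell_n}\}^{\perp}$. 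The time derivative $\dot{\mathcal F}$ is supported in the gap regions between moving solitons, where $\vec u$ is exponentially small; hence $\mathcal F(t)\le \mathcal F(0)+C e^{-\gamma_0 L}$, and together with the coercivity this gives the bootstrap bound
\[
\|\vec\varepsilon(t)\|_{\E}^{2}\lesssim \delta^{2}+e^{-\gamma_0 L}+\sum_{n=1}^{N}\bigl((a_n^{+}(t))^{2}+(a_n^{-}(t))^{2}\bigr).
\]

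The stable components verify $|a_n^{-}(t)|\lesssim \delta+e^{-\gamma_0 L}$ directly by forward Gronwall applied to the ODE for $a_n^-$. For the unstable components, I would define $T^{\ast}$ as the exit time of the constraint $\sum_n (a_n^{+}(t))^{2}\le K(\delta+e^{-\gamma_0 L})^{2}$ and consider the map $(h_n^{+})\mapsto (a_n^{+}(T^{\ast}))/(\delta+e^{-\gamma_0 L})$ on the unit ball of $\RR^{N}$. The outgoing character of the unstable ODE at the exit time, together with the bootstrap from the previous paragraph, makes this map continuous and outward-pointing on the boundary, so a Brouwer-type no-retraction argument produces a choice of $(h_n^{+})$ for which $T^{\ast}=+\infty$, yielding the full conclusion.

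The main obstacle is the construction of $\mathcal F$. In scalar Hamiltonian models such as NLS or KdV, the localized energy and momentum admit natural localizations, but the wave system \eqref{NLKGvec} entangles $u_{1}$, $u_{2}$ and $\px u_{1}$ through the momentum $\mathcal I$, and localizing $\mathcal I$ generically produces cross terms $(\pt\Phi_n)(\px u_{1})u_{2}$ and $(\px\Phi_n)u_{2}^{2}$ that destroy monotonicity under generic weights. Using the Lorentz factor $\ell_n$ as both the transport speed of $\Phi_n$ and the coefficient of the localized momentum is precisely what combines these cross terms into a sign-favorable quadratic form supported in the exponentially small inter-soliton region. Setting up this functional so that it is coercive on the full orthogonal subspace after modulation and almost-conserved uniformly in the soliton configuration is where most of the technical work will lie.
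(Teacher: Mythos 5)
Your overall architecture (modulation, ODEs for the unstable/stable modes, a localized Lorentz-adapted functional giving coercivity, and a Brouwer no-retraction argument to choose the $N$ parameters) matches the paper's. However, there is a genuine gap at the heart of the argument, namely the claimed almost-conservation of your functional $\mathcal F$. You assert that $\dot{\mathcal F}$ is supported in the inter-soliton regions ``where $\vec u$ is exponentially small''; this is false. Only the soliton part of $\vec u$ is exponentially small there: the remainder $\vec\varphi$ is merely $O(\delta)$ in $H^1\times L^2$ with no spatial localization, so the transition terms in $\dot{\mathcal F}$ are quadratic in $\vec\varphi$ of size $\sim \|\vec\varphi\|_{\E}^{2}/(\text{cutoff width})$, and with fixed-width cutoffs their time integral over $[0,\infty)$ diverges. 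Moreover, your specific choice (partition of unity $\Phi_n$ moving at the intermediate speeds $\beta_n$, but momentum coefficient equal to the soliton speed $\ell_n$ on the support of $\Phi_n$) does not produce a signed quadratic form in the transition regions: computing $\frac{\d}{\d t}\big(2\int \chi (\px u_1)u_2\big)$ with $\pt\chi=-\beta\px\chi$ leaves, per unit $\px\chi\ge 0$, a term $+u_1^{2}$ with the wrong sign (besides mismatched cross terms), and this is exactly a non-small quadratic contribution of $\varphi_1$ in the transition zone. Note also your internal inconsistency: you first transport $\Phi_n$ at the intermediate speeds, then say you use $\ell_n$ ``as both the transport speed and the momentum coefficient''; neither choice alone closes the sign.

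What is needed (and what the paper does) is a specific combination: write the weights as telescoping steps $\chi_n$ moving at speed $\beta_n$, and pair each localized momentum increment $\mathcal I_n$ with $\beta_n E_n$ (the same $\beta_n$ as the cutoff speed), with coefficients $c_n$ as in \eqref{def:ck} chosen so that, after resummation, the functional is still locally equivalent to $\tilde c_n\lr{E+\ell_n$-momentum$}$ around each soliton, preserving coercivity. With this pairing the dangerous quadratic terms combine into $-2\int(\px u_1+\beta_n u_2)^2\px\chi_n\le 0$, the remaining nonlinear flux is $\int(u_1f(u_1)-2F(u_1))\px\chi_n$, which is $\le 0$ in the transition region only because $u_1$ is small there and the double power nonlinearity is defocusing at small amplitude (condition (iii) of Remark 1.3 — this structural hypothesis plays no role in your sketch but is decisive), and the leftover non-signed terms are handled by the additional correction $F_n$ in \eqref{def:Fn} together with cutoffs of slowly expanding width $(t+a)^{\alpha}$, $\frac12<\alpha<\frac47$, so that their time integral is $O(L^{1-2\alpha})\sup_s\|\vec\varphi(s)\|_{\E}^{2}$ (Lemma \ref{le:virial}). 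Without these three ingredients your monotonicity step fails, and the bootstrap cannot be closed. Two minor points: the growth rate of the unstable modes is $\alpha_n=\nu_0(1-\ell_n^2)^{1/2}$ (eigenvalue of $\mathcal H_{\ell}J$ in \eqref{idenZ}), not $\nu_0/\sqrt{1-\ell_n^2}$; and prescribing $a_n^+(0)$ from $(h_n^+)$ is not a purely affine matter since the modulation parameters move too — the paper handles this with an implicit function theorem (Lemma \ref{chooini}).
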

\begin{remark}
As remarked before, each soliton solution $\vec{u}(t,x)=\pm \vec{Q}_{\ell_{n}}(x-\ell_n t)$ of equation~\eqref{NLKGvec} has exactly one exponential instability direction in positive time.
Recall that such instability has led to the construction of co-dimension one center-stable manifolds for initial data close to a single soliton for the NLKG or the wave equations in~\cite{KNS,NSEMS,NS}. 
In the case of a perturbation of the sum of $N$ decoupled solitons $\sum_{n=1}^{N} \sigma_n Q_{\ell_{n}}(\cdot-y_{n})$, this justifies the need of adjusting the $N$ free parameters $(h^{+}_{n})_{n\in \{1,\ldots,N\}}$ in the choice of the initial data to obtain global existence and stability
in Theorem~\ref{main:theo}.
Such difficulty does not appear when the solitons are stable, like for the mass subcritical generalized Korteweg-de Vries (gKdV) and nonlinear Schr\"odinger equation (NLS) discussed in the next remark.
\end{remark}
\begin{remark}
Historically, multi-soliton solutions were first studied extensively for integrable equations, like the Korteweg-de Vries equation and the one-dimensional cubic Schr\"odinger equation.
In the non-integrable cases, for dispersive or wave equations, the first result concerning stability and asymptotic stability of multi-soliton solutions was given by Perelman~\cite{GP1} for some NLS equations, extending the single soliton case treated by Buslaev and Perelman~\cite{BP}. 
The present work is inspired by~\cite{MMTgkdv,MMTSchor} proving the stability of the sums of decoupled solitons for the mass subcritical gKdV equations and some NLS equations. See also~\cite{RSS} and~\cite{BGS,CP,LCW,MTX} for other related models.

Such stability results are closely related to the existence of \emph{asymptotic pure multi-solitons}, 
established in the case of the NLKG equation by C\^ote and Mu\~noz~\cite{CMkg}.
For gKdV, NLS, and the energy-critical wave equation for both stable and unstable solitons, we refer to~\cite{Com,CMM,Ma,MMnls,MMwave,Xwave}. 
\end{remark}
\begin{remark}
The double power nonlinearity \eqref{nonlin} is a typical choice for dispersive or wave equations, but 
Theorem~\ref{main:theo} can be extended to any general real-valued $C^{1,\alpha}$ function $f$
(with $\alpha>0$)
satisfying the following:
\begin{enumerate}
\item[$\rm{(i)}$] The function $f$ is odd and $f(0)=f'(0)=0$.
\item[$\rm{(ii)}$] There exists a smallest $s_{0}>0$ such that $F(s_{0})-\frac{1}{2}s_{0}^{2}=0$, and $f(s_{0})-s_{0}>0$.
\item[$\rm{(iii)}$] There exists $r_{0}>0$ such that for all $s\in (-r_{0},r_{0})$, $sf(s)-2F(s)\le0$.
\end{enumerate}
Here, $F(s)=\int_{0}^{s}f(\sigma)\d \sigma$ for $s\in \RR$.
Conditions (i)-(ii) are related to the necessary and sufficient condition for existence of a unique standing wave in~\cite[Theorem~5]{BL}. Note that for $f$ defined in~\eqref{nonlin},
the existence of a smallest $s_0>0$ such that $F(s_{0})-\frac{1}{2}s_{0}^{2}=0$ is clear
and then
\begin{align*}
f(s_0)-s_0 = s_0^{p} - s_0^{q} - s_0 
&=\frac1{s_0} \left((p+1) F(s_0) + \frac{p-q}{q+1} s_0^{q+1} - s_0^2\right)\\
&=\frac 1{s_0} \left(\frac{p-q}{q+1} s_0^{q+1} + \frac{p-1}2 s_0^2\right)>0,
\end{align*}
which justifies the existence of the standing wave $Q$ for \eqref{NLKG} (see also \cite[Example~2, p. 318]{BL}).

Condition (iii) ensures that the nonlinear effect is
defocusing for small values of $u_1$, which is decisive in our proof (more precisely, in the proof of the monotonicity result Lemma~\ref{le:virial}).
Recall that this issue is already present in~\cite{MMTSchor,GP1} for the NLS equation, where a weaker but similar restriction is imposed on the nonlinearity.
\end{remark}

To our knowledge, this article 
proves the first statement of (conditional) stability of the sums of decoupled solitons for a wave-type equation.
Observe that the Lorentz transform involved in the propagation of solitons has quite different properties than the Galilean transform for the NLS equation or the natural propagation phenomenon related to the solitons of the gKdV equation.
Compared to the energy method developed in~\cite{MMTSchor}, we use a similar functional based on localized versions of the momentum and the energy
around each soliton (respectively defined in \eqref{def:In} and \eqref{def:En}), but a specific combination of these quantities has to be used in the wave case (see \eqref{def:comE} and the definition of the coefficients $c_{n}$ in \eqref{def:ck}); moreover,
the introduction of an additional lower order term $F_{n}$ (see~\eqref{def:Fn}) was necessary to remove diverging terms.
See also Remark~\ref{re:cn}.

\smallskip

This paper is organized as follows.
Section~\ref{S:2} introduces technical tools involved in a dynamical approach to the $N$-soliton problem for~\eqref{NLKG}: estimates of the nonlinear interactions between solitons,
decomposition by modulation and parameter estimates. Energy estimates and monotonicity properties are proved in Section~\ref{Se:3}. Finally, 
Theorem~\ref{main:theo} is proved in Section~\ref{S:4}.
\subsection{Notation}
We denote $(\cdot,\cdot)_{L^{2}}$ the $L^{2}$ scalar product for real-valued functions $u,v\in L^{2}$,
\begin{equation*}
\left(u,v\right)_{L^{2}}:=\int_{\RR}u(x)v(x)\d x.
\end{equation*}
For
\begin{equation*}
\vec{u}=\left(\begin{array}{c}u_{1} \\u_{2}\end{array}\right),\quad \vec{v}=\left(\begin{array}{c}v_{1} \\v_{2}\end{array}\right),
\end{equation*}
denote
\begin{equation*}
\big(\vec{u},\vec{v})_{L^{2}}:=\sum_{k=1,2}\big(u_{k},v_{k})_{L^{2}},
\quad 
\|\vec{u}\|_{\E}^{2}:=\|u_{1}\|_{H^{1}}^{2}+\|u_{2}\|_{L^{2}}^{2}.
\end{equation*}
For $f\in L^{2}$ and $\ell\in (-1,1)$, set
\begin{equation*}
f_{\ell}(x)=f\left(x_{\ell}\right),\quad \mbox{where}\quad x_{\ell}=\frac{x}{\sqrt{1-\ell^{2}}}.
\end{equation*}

\subsection*{Acknowledgements}
The author would like to thank his advisor, Professor Yvan Martel, for his generous help,
encouragement, and guidance related to this work. The author is also grateful to the
anonymous referees for careful reading and useful suggestions, which led to an improved version of this article.

\section{Preliminaries}\label{S:2}

\subsection{Spectral theory}\label{S:2.1}
It is well-known that the unique positive even solution $Q$ of~\eqref{eq:Q} is smooth and satisfies the following estimate on $\R$,
\begin{equation}\label{Qdec}
|Q^{(\alpha)}(x)|\lesssim e^{-|x|} \quad \mbox{for any $\alpha\in \mathbb{N}$}.
\end{equation} 

In this section, we recall the spectral properties of the linearized operator around~$Q_{\ell}$.
First, for $-1<\ell<1$, let 
\begin{equation*}
 \mathcal{L}_{\ell}=-(1-\ell^{2})\partial_{x}^{2}+1-f'(Q_{\ell}).
\end{equation*}
 We recall the following standard spectral properties for $\mathcal{L}$ and $\mathcal{L}_{\ell}$ (see $e.g.$~\cite[Lemma~1 and Corollary 1]{CMkg}).
\begin{lemma}
\emph{(i) Spectral properties}.
The unbounded operator $\mathcal{L}$ on $L^{2}$ with
domain $H^{2}$ is self-adjoint, its continuous spectrum is $[1,+\infty)$,
its kernel is spanned by $Q'$ and it has a unique negative 
eigenvalue $-\nu_{0}^{2}$ $( \nu_{0}>0)$, with corresponding smooth even 
eigenfunction $Y$. Moreover, on $\RR$,
\begin{equation}\label{est:decayY}
\big|Y^{(\alpha)}(x)\big|\lesssim e^{-\sqrt{1+\nu^{2}_{0}}|x|}\quad \mbox{for any $\alpha\in \mathbb{N}$}.
\end{equation}

\emph{(ii) Coercivity property of $\mathcal{L}$.}
There exists $\nu>0$ such that, for all $v\in H^{1}$,
\begin{equation*}
\big(\mathcal{L}v,v\big)_{L^{2}}\ge \nu \|v\|_{H^{1}}^{2}-\nu^{-1}\big((v,Q')_{L^{2}}^{2}+(v,Y)_{L^{2}}^{2}\big).
\end{equation*}

\emph{(iii) Coercivity property of $\mathcal{L}_{\ell}$.}
Let $\ell\in (-1,1)$.
There exists $\nu>0$ such that, for all $v\in H^{1}$,
\begin{equation*}
\big(\mathcal{L}_{\ell}v,v\big)_{L^{2}}\ge \nu \|v\|_{H^{1}}^{2}-\nu^{-1}\big((v,\partial_{x}Q_{\ell})_{L^{2}}^{2}+(v,Y_{\ell})_{L^{2}}^{2}\big).
\end{equation*}
\end{lemma}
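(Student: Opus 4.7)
The plan is to address the three items in turn, noting that the lemma is stated as a standard collection of facts (and cited from \cite{CMkg}), so I would essentially reconstruct well-known arguments.

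For part (i), I would first establish self-adjointness of $\mathcal{L}=-\partial_x^2+1-f'(Q)$ on $H^{2}$ by viewing $-f'(Q)$ as a symmetric bounded multiplication operator (it is smooth and decays exponentially because $Q$ does, by \eqref{Qdec}). The continuous spectrum $[1,+\infty)$ then follows from Weyl's theorem, since $f'(Q)$ is a relatively compact perturbation of $-\partial_x^2+1$ whose spectrum is $[1,+\infty)$. The kernel contains $Q'$: differentiate $Q''-Q+f(Q)=0$ to get $\mathcal{L}Q'=0$. Since $\mathcal{L}v=0$ is a second-order ODE with at most one independent $L^2$-solution (the other one grows exponentially by the asymptotic behavior of the potential at infinity), the kernel is exactly $\R Q'$. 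The existence and uniqueness of a unique simple negative eigenvalue $-\nu_0^2$ is the delicate point; I would argue via Sturm oscillation theory, using that $Q'$ (the kernel element) has exactly one sign change at $0$, which forces exactly one eigenvalue below $0$. Simplicity follows from the second-order ODE structure, and the evenness of $Y$ from the fact that $\mathcal{L}$ commutes with the parity reflection and the eigenspace is one-dimensional. The pointwise decay \eqref{est:decayY} is then an ODE argument: $Y$ solves $-Y''+(1+\nu_0^2-f'(Q))Y=0$, and since $f'(Q(x))\to 0$ exponentially, $Y$ behaves like the decaying solution of $-Y''+(1+\nu_0^2)Y=0$, namely $e^{-\sqrt{1+\nu_0^2}|x|}$, with derivative estimates following from bootstrapping the equation.

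For part (ii), the classical approach is to decompose $v=aQ'+bY+w$ with $w$ orthogonal to both $Q'$ and $Y$ in $L^{2}$. Expanding $(\mathcal{L}v,v)_{L^{2}}$ and using $\mathcal{L}Q'=0$, $\mathcal{L}Y=-\nu_0^2 Y$, together with spectral gap (the infimum of the spectrum on the orthogonal complement of $\ker\mathcal{L}\oplus \mathrm{span}(Y)$ is some $\nu_1>0$), yields a lower bound of the form $(\mathcal{L}v,v)_{L^{2}}\ge c\|w\|_{L^{2}}^{2}-C(a^{2}+b^{2})$. Expressing $a,b$ in terms of $(v,Q')_{L^{2}}$ and $(v,Y)_{L^{2}}$ gives an $L^{2}$-coercivity. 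The upgrade from $L^{2}$- to $H^{1}$-coercivity is then straightforward: writing $(\mathcal{L}v,v)_{L^{2}}=\|v'\|_{L^{2}}^{2}+\|v\|_{L^{2}}^{2}-\int f'(Q)v^{2}$, the potential term $\int f'(Q)v^{2}$ is controlled in $L^{2}$ by the coercivity just obtained (up to the two scalar products), and a convex combination of this identity with the $L^{2}$-bound yields the desired $H^{1}$-inequality.

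For part (iii), I would reduce to (ii) by the Lorentz change of variables $y=x/\sqrt{1-\ell^{2}}$, $\tilde{v}(y)=v(\sqrt{1-\ell^{2}}\,y)$. A direct computation gives
\begin{equation*}
\big(\mathcal{L}_{\ell}v,v\big)_{L^{2}}=\sqrt{1-\ell^{2}}\,\big(\mathcal{L}\tilde{v},\tilde{v}\big)_{L^{2}},
\end{equation*}
and the inner products $(v,\partial_{x}Q_{\ell})_{L^{2}}$, $(v,Y_{\ell})_{L^{2}}$ are scalar multiples of $(\tilde{v},Q')_{L^{2}}$, $(\tilde{v},Y)_{L^{2}}$ with harmless $\sqrt{1-\ell^{2}}$ factors. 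Similarly $\|v\|_{H^{1}}^{2}$ and $\|\tilde{v}\|_{H^{1}}^{2}$ are comparable uniformly for $\ell$ bounded away from $\pm 1$. Applying (ii) to $\tilde{v}$ and translating back gives the result, with a constant $\nu$ depending on $\ell$ (which is fine since $\ell$ is fixed in the statement).

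The main obstacle among these steps is the uniqueness of the negative eigenvalue and kernel in (i): while standard for pure-power nonlinearities, for the double power $f(u)=|u|^{p-1}u-|u|^{q-1}u$ one must invoke the Sturm oscillation argument carefully together with the general properties of the ground state from \cite{BL}. Since the precise statement is already established in \cite[Lemma~1 and Corollary~1]{CMkg}, I would ultimately invoke that reference for (i) and concentrate the self-contained argument on (ii)--(iii).
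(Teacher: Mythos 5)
Your proposal is correct, but note that the paper itself gives no proof of this lemma: it is stated as a recollection of standard facts and simply referred to \cite[Lemma~1 and Corollary~1]{CMkg}, which is also what you fall back on for the delicate part of (i). Your reconstruction of the standard arguments is sound and is essentially what the cited reference does: Kato--Rellich/Weyl for self-adjointness and the essential spectrum $[1,+\infty)$; $\mathcal{L}Q'=0$ by differentiating \eqref{eq:Q} together with the Wronskian argument ruling out a second $L^{2}$ kernel element; Sturm oscillation (using that $Q'$ has a single zero, since $Q$ is positive, even and decreasing on $(0,\infty)$) to get exactly one simple negative eigenvalue, with $Y$ even by parity and one-dimensionality of the eigenspace, and the decay \eqref{est:decayY} from the ODE $-Y''+(1+\nu_{0}^{2}-f'(Q))Y=0$ with exponentially decaying potential; for (ii), the orthogonal decomposition $v=aQ'+bY+w$ (legitimate since $Q'\perp Y$ by parity), the spectral gap on $\{Q',Y\}^{\perp}$ furnished by (i), and the convex-combination trick to upgrade $L^{2}$- to $H^{1}$-coercivity; for (iii), the scaling $y=x/\sqrt{1-\ell^{2}}$, under which $(\mathcal{L}_{\ell}v,v)_{L^{2}}=\sqrt{1-\ell^{2}}\,(\mathcal{L}\tilde v,\tilde v)_{L^{2}}$ and the orthogonality functionals transform by harmless constants, the $\ell$-dependence of $\nu$ being acceptable since $\ell$ is fixed. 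The only thing your write-up buys beyond the paper is a self-contained sketch of (ii)--(iii); what the paper's choice buys is brevity, consistent with its treatment of this lemma as known background. No genuine gap.
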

\smallskip

Second, we define
\begin{equation*}
\mathcal{H}_{\ell}=\left(\begin{array}{cc}-\partial_{x}^{2}+1-f'(Q_{\ell}) & -\ell\partial_{x} \\\ell\partial_{x} & 1\end{array}\right),\quad 
J=\left(\begin{array}{cc}0 & 1 \\ -1 & 0\end{array}\right),
\end{equation*}
and
\begin{equation*}
\vec{Z}^{0}_{\ell}=\left(
\begin{array}{c}\partial_{x}Q_{\ell}\\ -\ell\partial^{2}_{x}Q_{\ell}
\end{array}\right),\quad \vec{Z}^{\pm}_{\ell}=\left(
\begin{array}{c} \Upsilon_{\ell,1}^\pm\\ \Upsilon_{\ell,2}^\pm
\end{array}\right)
\end{equation*}
where
\[
\Upsilon_{\ell,1}^\pm = \bigg(\ell\partial_{x}Y_{\ell}\pm \frac{{\nu_{0}}}{\sqrt{1-\ell^{2}}}Y_{\ell}\bigg)e^{\pm \frac{\ell{\nu_{0}}}{\sqrt{1-\ell^{2}}}x}
\quad \mbox{and}\quad 
\Upsilon_{\ell,2}^\pm = Y_{\ell}e^{\pm \frac{ \ell{\nu_{0}}}{\sqrt{1-\ell^{2}}}x}.
\]
The functions $\vec{Z}_{\ell}^{\pm}$ are eigenfunctions of the linearized matrix operator $\mathcal{H}_{\ell}J$ and are explicitly determined by solving the eigenvalue problem $\mathcal{H}_{\ell}J\vec{Z}=\lambda \vec{Z}$.

\begin{lemma}[\cite{CMkg}] Let $\ell\in (-1,1)$.

\emph{(i) Decay properties of $\Upsilon_{\ell,1}^\pm$ and $\Upsilon_{\ell,2}^\pm$.}
It holds, on $\R$, for $k=1,2$,
\begin{equation}\label{est:decayUpsilon}
\bigg|\frac{\d ^\alpha}{\d x^{\alpha}}\Upsilon_{\ell,k}^\pm(x)\bigg|
\lesssim e^{-|x|}\quad \mbox{for any $\alpha\in \mathbb{N}$}.
\end{equation}
\emph{(ii) Properties of $\mathcal{H}_{\ell}$ and $\mathcal{H}_{\ell}J$.}
It holds
\begin{equation}\label{idenZ}
\mathcal{H}_{\ell}\vec{Z}^{0}_{\ell}=0,\quad \big(\vec{Z}^{0}_{\ell},\vec{Z}_{\ell}^{\pm}\big)_{L^{2}}=0
\quad \mbox{and}\quad \mathcal{H}_{\ell}J\big(\vec{Z}^{\pm}_{\ell}\big)=\mp {\nu_{0}}(1-\ell^{2})^{\frac{1}{2}}\vec{Z}_{\ell}^{\pm}.
\end{equation}

\emph{(iii) Coercivity property of $\mathcal{H}_{\ell}$.}
There exists $\nu>0$ such that, for all $\vec{v}\in H^{1}\times L^{2}$,
\begin{equation}\label{coer:H}
\big(\mathcal{H}_{\ell}\vec{v},\vec{v}\big)_{L^{2}}\ge \nu \|\vec{v}\|_{\E}^{2}-\nu^{-1}\big((\vec{v},\vec{Z}^{0}_{\ell})_{L^{2}}^{2}+(\vec{v},\vec{Z}_{\ell}^{+})_{L^{2}}^{2}+(\vec{v},\vec{Z}_{\ell}^{-})_{L^{2}}^{2}\big).
\end{equation}
\end{lemma}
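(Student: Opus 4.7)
The plan is to handle the three parts in order: (i) and (ii) by explicit computation, (iii) by an algebraic identity reducing to the coercivity of $\mathcal{L}_{\ell}$ plus a standard compactness/contradiction argument to convert the two scalar orthogonality conditions into the three vector ones. For (i), I substitute the definitions of $\Upsilon_{\ell,k}^{\pm}$ and use the chain-rule decay $|\px^{\alpha}Y_{\ell}(x)| \lesssim e^{-\sqrt{1+\nu_{0}^{2}}|x|/\sqrt{1-\ell^{2}}}$ inherited from~\eqref{est:decayY}. The Leibniz rule reduces the estimate to verifying $(\sqrt{1+\nu_{0}^{2}} - |\ell|\nu_{0})^{2} \ge 1 - \ell^{2}$, which follows from the AM--GM bound $2|\ell|\nu_{0}\sqrt{1+\nu_{0}^{2}} \le \ell^{2}(1+\nu_{0}^{2}) + \nu_{0}^{2}$; this handles both signs of $x$ and both choices of $\pm$ simultaneously.

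For (ii), $\mathcal{H}_{\ell}\vec{Z}_{\ell}^{0} = 0$ is a direct calculation whose only nontrivial ingredient is the $x$-derivative of the rescaled ground-state equation $(1-\ell^{2})\px^{2}Q_{\ell} = Q_{\ell} - f(Q_{\ell})$. For the eigenvalue identity $\mathcal{H}_{\ell}J\vec{Z}_{\ell}^{\pm} = \mp\nu_{0}\sqrt{1-\ell^{2}}\,\vec{Z}_{\ell}^{\pm}$, I compute $J\vec{Z}_{\ell}^{\pm} = (\Upsilon_{\ell,2}^{\pm}, -\Upsilon_{\ell,1}^{\pm})^{\top}$, expand both rows of $\mathcal{H}_{\ell}J\vec{Z}_{\ell}^{\pm}$, and use the rescaled eigen-equation $(1-\ell^{2})\px^{2}Y_{\ell} = (1+\nu_{0}^{2})Y_{\ell} - f'(Q_{\ell})Y_{\ell}$ (inherited from $\mathcal{L}Y = -\nu_{0}^{2}Y$); the specific coefficient $\ell$ in front of $\px Y_{\ell}$ in $\Upsilon_{\ell,1}^{\pm}$ is exactly what makes the $\px Y_{\ell}$ contributions cancel, leaving the claimed eigenvalue. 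Self-adjointness of $\mathcal{H}_{\ell}$ then yields the orthogonality in one line:
\begin{equation*}
\mp\nu_{0}\sqrt{1-\ell^{2}}\,(\vec{Z}_{\ell}^{\pm}, \vec{Z}_{\ell}^{0})_{L^{2}} = (\mathcal{H}_{\ell}J\vec{Z}_{\ell}^{\pm}, \vec{Z}_{\ell}^{0})_{L^{2}} = (J\vec{Z}_{\ell}^{\pm}, \mathcal{H}_{\ell}\vec{Z}_{\ell}^{0})_{L^{2}} = 0.
\end{equation*}

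For (iii), integration by parts and completion of the square give the key identity
\begin{equation*}
(\mathcal{H}_{\ell}\vec{v}, \vec{v})_{L^{2}} = (\mathcal{L}_{\ell}v_{1}, v_{1})_{L^{2}} + \|v_{2} + \ell\px v_{1}\|_{L^{2}}^{2},
\end{equation*}
which combined with the coercivity of $\mathcal{L}_{\ell}$ from the preceding lemma and the bound $\|v_{2}\|_{L^{2}}^{2} \le 2\|v_{2}+\ell\px v_{1}\|_{L^{2}}^{2} + 2\|v_{1}\|_{H^{1}}^{2}$ already yields $(\mathcal{H}_{\ell}\vec{v}, \vec{v})_{L^{2}} \gtrsim \|\vec{v}\|_{\E}^{2} - C\bigl((v_{1}, \px Q_{\ell})_{L^{2}}^{2} + (v_{1}, Y_{\ell})_{L^{2}}^{2}\bigr)$. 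The main obstacle is to trade the two scalar orthogonality terms for the three vector orthogonality terms appearing in~\eqref{coer:H}. I close this by the standard compactness/contradiction argument: assuming $\vec{v}_{n}$ with $\|\vec{v}_{n}\|_{\E} = 1$, $(\vec{v}_{n}, \vec{Z}_{\ell}^{0})_{L^{2}} = (\vec{v}_{n}, \vec{Z}_{\ell}^{\pm})_{L^{2}} = 0$, and $(\mathcal{H}_{\ell}\vec{v}_{n}, \vec{v}_{n})_{L^{2}} \to 0$, I extract a weak limit $\vec{v}_{\infty}$ in $H^{1}\times L^{2}$. Using (ii), any element of $\ker\mathcal{H}_{\ell}$ must satisfy $v_{2} = -\ell\px v_{1}$ (from the second row) and $\mathcal{L}_{\ell}v_{1} = 0$, so $\ker\mathcal{H}_{\ell} = \mathrm{span}(\vec{Z}_{\ell}^{0})$ from the classical identity $\ker\mathcal{L}_{\ell} = \mathrm{span}(\px Q_{\ell})$. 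The three orthogonality conditions in the limit then eliminate both the kernel and the unique negative eigendirection of $\mathcal{H}_{\ell}$ (controlled by the projections onto $\vec{Z}_{\ell}^{\pm}$), while Rellich compactness absorbs the exponentially decaying potential $f'(Q_{\ell})$, producing the required contradiction with $\|\vec{v}_{n}\|_{\E} = 1$.
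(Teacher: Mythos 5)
Your parts (i) and (ii) are fine. For (i) you reproduce essentially the argument the paper itself gives: reduce to the exponent inequality $\sqrt{1+\nu_{0}^{2}}-|\ell|\nu_{0}\ge\sqrt{1-\ell^{2}}$, which you get by AM--GM and the paper gets by the equivalent algebraic identity $(\sqrt{1+\nu_0^2}-|\ell|\nu_0)^2=(|\ell|\sqrt{1+\nu_0^2}-\nu_0)^2+1-\ell^2$. For (ii), where the paper simply cites \cite{CMkg}, your direct computations are correct: the second row of $\mathcal{H}_{\ell}J\vec{Z}^{\pm}_{\ell}$ gives the eigenvalue immediately, the first row closes using $(1-\ell^{2})\px^{2}Y_{\ell}=(1+\nu_{0}^{2})Y_{\ell}-f'(Q_{\ell})Y_{\ell}$, and the one-line orthogonality via symmetry of $\mathcal{H}_{\ell}$ and $\mathcal{H}_{\ell}\vec{Z}^{0}_{\ell}=0$ is valid since $\nu_{0}(1-\ell^{2})^{1/2}\neq 0$. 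Your identity $(\mathcal{H}_{\ell}\vec{v},\vec{v})_{L^{2}}=(\mathcal{L}_{\ell}v_{1},v_{1})_{L^{2}}+\|v_{2}+\ell\px v_{1}\|_{L^{2}}^{2}$ and the determination $\ker\mathcal{H}_{\ell}=\mathrm{span}(\vec{Z}^{0}_{\ell})$ are also correct.

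The genuine gap is in (iii), at exactly the point that carries all the difficulty (and which the paper does not reprove but attributes to Lemma 3 and Proposition 2 of \cite{CMkg}). Your compactness argument correctly reduces the claim to the positivity statement: if $\vec{v}\neq 0$ and $(\vec{v},\vec{Z}^{0}_{\ell})_{L^{2}}=(\vec{v},\vec{Z}^{\pm}_{\ell})_{L^{2}}=0$, then $(\mathcal{H}_{\ell}\vec{v},\vec{v})_{L^{2}}>0$. But you then assert that the orthogonality conditions ``eliminate the unique negative eigendirection of $\mathcal{H}_{\ell}$ (controlled by the projections onto $\vec{Z}^{\pm}_{\ell}$),'' and this is precisely what is not automatic: $\vec{Z}^{\pm}_{\ell}$ are eigenfunctions of $\mathcal{H}_{\ell}J$, not of $\mathcal{H}_{\ell}$, so orthogonality to them does not by itself remove the negative direction of the quadratic form. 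Worse, the standard single-direction criterion degenerates here: since $\mathcal{H}_{\ell}(J\vec{Z}^{\mp}_{\ell})=\pm\nu_{0}(1-\ell^{2})^{1/2}\vec{Z}^{\mp}_{\ell}$, one has $(\mathcal{H}_{\ell}^{-1}\vec{Z}^{\mp}_{\ell},\vec{Z}^{\mp}_{\ell})_{L^{2}}$ proportional to $(J\vec{Z}^{\mp}_{\ell},\vec{Z}^{\mp}_{\ell})_{L^{2}}=0$ by antisymmetry of $J$, so the usual ``$(\mathcal{H}^{-1}Z,Z)<0$'' test gives no information and one must genuinely exploit both directions $\vec{Z}^{+}_{\ell},\vec{Z}^{-}_{\ell}$ together (e.g.\ via the pairing $(\vec{Z}^{+}_{\ell},J\vec{Z}^{-}_{\ell})_{L^{2}}\neq 0$ and the associated invariant-subspace decomposition, as in \cite{CMkg}). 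Without this step your contradiction argument only shows that the weak limit $\vec{v}_{\infty}\neq 0$ satisfies the three orthogonality conditions and $(\mathcal{H}_{\ell}\vec{v}_{\infty},\vec{v}_{\infty})_{L^{2}}\le 0$, which is not yet a contradiction. A second, more minor, omission: even granted coercivity on the orthogonal subspace, the stated inequality~\eqref{coer:H} is the penalized version valid for all $\vec{v}$, and the passage from the former to the latter (decomposing $\vec{v}$ and using boundedness of the form) should be spelled out, though that part is routine.
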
 
\begin{proof}
We prove (i) for $\alpha=0$; the estimates for $\alpha\geq 1$ are proved similarly.
By \eqref{est:decayY}, we have, for $k=1,2$, on $\R$,
\[
|\Upsilon_{\ell,k}^\pm(x)|\lesssim e^{- \frac{\sqrt{1+\nu_0^2}- |\ell| \nu_{0}} {\sqrt{1-\ell^{2}}}|x|}.
\]
But an elementary computation yields
\[
\left(\sqrt{1+\nu_0^2}- |\ell| \nu_{0}\right)^2
=\left(|\ell| \sqrt{1+\nu_0^2}- \nu_{0}\right)^2 + 1-\ell^2,
\]
and so $\sqrt{1+\nu_0^2}- |\ell| \nu_{0} \geq \sqrt{1-\ell^{2}}$, 
which implies $|\Upsilon_{\ell,k}^\pm(x)|\lesssim e^{-|x|}$.

For (ii) and (iii), we refer to Proposition 1 and the proofs of Lemma 2, Lemma 3 and Proposition 2 in~\cite{CMkg}.
\end{proof}

\subsection{Decomposition of the solution around multi-solitary waves}

We recall general results on solutions of~\eqref{NLKGvec} that are close to the sum of $N\ge 2$ decoupled solitary waves.
For any $n\in \{1,\ldots, N\}$, let $\sigma_{n}=\pm 1$ and $\ell_{n}\in (-1,1)$ satisfy
\[-1<\ell_{1}<\ell_{2}<\cdots<\ell_{N}<1.\]
Let
$t\mapsto y_{n}(t)\in \RR$ be $C^{1}$ functions such that 
\begin{equation}\label{dis:y}
y_{n+1}-y_{n}\gg 1\quad \mbox{for any $n=1,\ldots N-1$}.
\end{equation}
For $n\in \{1,\ldots,N\}$, define
\begin{equation*}
Q_{n}=\sigma_{n}Q_{\ell_{n}}(\cdot-y_{n}),\quad \vec{Q}_{n}=\left(
\begin{array}{c}
Q_{n}\\
-\ell_{n}\partial_{x}Q_{n}
\end{array}\right)
\end{equation*}
and
\begin{equation*}
\vec{Z}_{n}^{0}=\sigma_{n}\vec{Z}_{\ell_{n}}^{0}(\cdot-y_{n}),\quad \vec{Z}_{n}^{\pm}=\vec{Z}_{\ell_{n}}^{\pm}(\cdot-y_{n})=\left(\begin{array}{c}
\Upsilon_{n,1}^{\pm}\\
\Upsilon_{n,2}^{\pm}
\end{array}\right).
\end{equation*}

We recall a decomposition result for solutions of~\eqref{NLKGvec}.
\begin{lemma}\label{le:decom}
There exist $L_{0}>0$ and $0<\delta_{0}\ll 1$ such that 
if $\vec{u}=(u_{1},u_{2})$ is a solution of~\eqref{NLKGvec} on $[0,T_{0}]$, where $T_{0}>0$, such that for all $t\in [0,T_{0}]$
\begin{equation}\label{estQ0}
\inf_{z_{n+1}-z_{n}>L_{0}}\|\vec{u}(t)-\sum_{n=1}^{N}\sigma_{n}\vec{Q}_{\ell_{n}}(\cdot-z_{n})\|_{\E} < \delta_{0},
\end{equation}
then there exist $C^{1}$ functions $y_{1}(t),\ldots,y_{N}(t)$ on $[0,T_{0}]$ such that, $\vec{\varphi}$ being defined by
\begin{equation}\label{def:psi}
\vec{\varphi}=\left(
\begin{array}{c} \varphi_{1}\\ \varphi_{2}
\end{array}\right),\quad \vec{u}=\sum_{n=1}^{N}\vec{Q}_{n}+\vec{\varphi},
\end{equation}
it satisfies
\begin{equation}\label{phiorth}
\big(\vec{\varphi},\vec{Z}_{n}^{0}\big)_{L^{2}}=0,\quad \mbox{for $n=1,\ldots,N$,}
\end{equation}
and
\begin{equation}\label{init}
\|\vec{\varphi}\|_{\E}\lesssim \delta_{0},\quad y_{n+1}-y_{n}\ge \frac{3}{4}L_{0}\quad \mbox{for $n=1,\ldots,N-1$}.
\end{equation}
\end{lemma}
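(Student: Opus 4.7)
The plan is to solve the $N$ orthogonality conditions \eqref{phiorth} by a single application of the implicit function theorem pointwise in $t$, and then upgrade to $C^1$ regularity using the equation. For $\vec u\in H^1\times L^2$ fixed and satisfying \eqref{estQ0}, I would introduce the map $\Phi=(\Phi_1,\ldots,\Phi_N)\colon \mathcal U\to\RR^N$ defined on $\mathcal U=\{\vec y\in\RR^N:y_{n+1}-y_n>\tfrac34 L_0\}$ by
\[
\Phi_n(\vec y;\vec u)=\Bigl(\vec u-\sum_{m=1}^N\sigma_m\vec Q_{\ell_m}(\cdot-y_m),\ \sigma_n\vec Z^0_{\ell_n}(\cdot-y_n)\Bigr)_{L^2},
\]
so that \eqref{phiorth} for $\vec\varphi=\vec u-\sum_m\vec Q_m$ is equivalent to $\Phi(\vec y;\vec u)=0$. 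At the reference point $\vec z$ provided by \eqref{estQ0}, the Cauchy--Schwarz inequality combined with the smallness assumption and \eqref{est:decayUpsilon} yields $|\Phi_n(\vec z;\vec u)|\lesssim\delta_0$.

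The next step is to compute the Jacobian $\partial_{\vec y}\Phi$ at $\vec y=\vec z$ and at the unperturbed profile $\vec u_\star=\sum_m\sigma_m\vec Q_{\ell_m}(\cdot-z_m)$. Observing that $\partial_x\vec Q_{\ell_n}=\vec Z^0_{\ell_n}$ directly from the definition, the diagonal entries reduce to $\partial_{y_n}\Phi_n\big|_{(\vec z,\vec u_\star)}=\|\vec Z^0_{\ell_n}\|_{L^2}^2>0$, a strictly positive constant depending only on $\ell_n$. The off-diagonal entries $\partial_{y_m}\Phi_n$ with $m\ne n$ are $L^2$ pairings of functions centered at points separated by at least $L_0$, with exponential decay supplied by \eqref{Qdec} and \eqref{est:decayUpsilon}, hence bounded by $e^{-\gamma L_0}$ for some universal $\gamma>0$. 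Perturbing $\vec u$ in the $\E$-norm by $\delta_0$ and $\vec y$ on a bounded set modifies $\partial_{\vec y}\Phi$ by a further $O(\delta_0+e^{-\gamma L_0})$. Choosing $L_0$ large and $\delta_0$ small then makes $\partial_{\vec y}\Phi$ a diagonally dominant, boundedly invertible matrix on a uniform neighborhood of $(\vec z,\vec u)$.

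The quantitative implicit function theorem now produces, for each $t\in[0,T_0]$, a unique $\vec y(t)\in\mathcal U$ solving $\Phi(\vec y(t);\vec u(t))=0$ and depending smoothly on $\vec u$; composition with the continuous flow $t\mapsto\vec u(t)\in\E$ yields continuous $y_n(\cdot)$, and the bounds in \eqref{init} follow from the triangle inequality applied to \eqref{estQ0}. To obtain $C^1$ regularity, I would differentiate the identity $\Phi(\vec y(t);\vec u(t))=0$ in $t$, substitute $\partial_t\vec u$ by the right-hand side of \eqref{NLKGvec}, and invert $\partial_{\vec y}\Phi$ to solve the resulting linear system for $\dot{\vec y}(t)$; the smoothness and rapid decay of $\vec Z^0_n$ make the right-hand side continuous in $t$ (pairings of $H^{-1}$ objects with smooth test functions), hence $\dot y_n\in C([0,T_0])$.

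The main technical obstacle will be making the Jacobian estimates uniform across all admissible data: one must track precisely how the off-diagonal exponential smallness depends on $L_0$ using the explicit decay constants in \eqref{Qdec}--\eqref{est:decayUpsilon}, ensure that the inverse of $\partial_{\vec y}\Phi$ remains uniformly bounded as $\delta_0\to 0$ and $L_0\to\infty$, and verify that the domain of validity of the implicit function theorem is large enough to accommodate every configuration allowed by \eqref{estQ0}. Once this bookkeeping is carried out, the rest of the proof is essentially routine.
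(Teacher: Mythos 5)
Your proposal is correct and is essentially the argument the paper has in mind: the paper omits the proof, citing the standard implicit-function-theorem modulation argument (Lemma 5 and Appendix B of~\cite{CMkg}), which is exactly what you carry out — orthogonality map $\Phi$, diagonal dominance of the Jacobian via $\vec Z^0_{\ell_n}=\partial_x\vec Q_{\ell_n}$ and exponential decay, uniform IFT, and $C^1$ regularity by differentiating $\Phi(\vec y(t);\vec u(t))=0$ along the flow. The only point to be careful about (standard in the cited references) is that uniqueness from the IFT is local near each reference configuration, so the pointwise-in-$t$ selections must be glued by a continuity/uniqueness argument in a tube around the multi-soliton rather than invoking uniqueness on all of $\mathcal U$.
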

\begin{proof}
The proof of the decomposition lemma relies on a standard argument based on the Implicit function Theorem (see $\emph{e.g.}$ Lemma 5 and Appendix B in~\cite{CMkg})
and we omit it.
\end{proof}
Set
\begin{equation}\label{def:UG}
\vec{U}=\left(
\begin{array}{c} U_{1}\\ U_{2}
\end{array}\right)=\sum_{n=1}^{N}\vec{Q}_{n}
\quad \mbox{and}\quad 
G=f(U_{1})-\sum_{n=1}^{N}f(Q_{n}).
\end{equation}
\begin{lemma}[Equation of $\vec{\varphi}$]
The function $\vec{\varphi}$ satisfies
\begin{equation}\label{syst_e}\left\{\begin{aligned}
\partial_{t}\varphi_{1} & = \varphi_{2} + {\rm Mod}_{1},\\
\partial_{t}\varphi_{2} & 
= \px^{2} \vp-\vp +f\left(U_{1}+\varphi_{1}\right) 
- f\left(U_{1}\right) + G + {\rm Mod}_{2},
\end{aligned}\right.\end{equation} 
where
\begin{equation}\label{def:mod}
{\rm Mod}_{1}=\sum_{n=1}^{N}(\dot{y}_{n}-\ell_{n})\partial_{x}Q_{n},\quad 
{\rm Mod}_{2}=-\sum_{n=1}^{N}(\dot{y}_{n}-\ell_{n})\ell_{n}\partial_{x}^{2}Q_{n}.
\end{equation}
\end{lemma}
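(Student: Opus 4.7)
The plan is to simply differentiate the decomposition $\vec{u}=\vec{U}+\vec{\varphi}$ in time, use the fact that $\vec{u}$ solves the system~\eqref{NLKGvec}, and check that, after accounting for the time-dependent translation parameters $y_n(t)$ and the interaction term $G$, each $\vec{Q}_n$ satisfies the evolution equation up to the modulation terms ${\rm Mod}_1, {\rm Mod}_2$.

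First, I would compute $\partial_{t}\vec{Q}_{n}$. Since $Q_{n}(t,x)=\sigma_{n}Q_{\ell_{n}}(x-y_{n}(t))$, the chain rule gives $\partial_{t}Q_{n}=-\dot{y}_{n}\partial_{x}Q_{n}$, and therefore
\[
\partial_{t}\vec{Q}_{n}=\left(\begin{array}{c}-\dot{y}_{n}\partial_{x}Q_{n}\\ \ell_{n}\dot{y}_{n}\partial_{x}^{2}Q_{n}\end{array}\right).
\]
For the first line of~\eqref{syst_e}, I would write $\partial_{t}\varphi_{1}=\partial_{t}u_{1}-\partial_{t}U_{1}=u_{2}-\partial_{t}U_{1}=\varphi_{2}+U_{2}-\partial_{t}U_{1}$. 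Substituting $U_{2}=-\sum_{n}\ell_{n}\partial_{x}Q_{n}$ and $\partial_{t}U_{1}=-\sum_{n}\dot{y}_{n}\partial_{x}Q_{n}$ immediately produces $\varphi_{2}+\sum_{n}(\dot{y}_{n}-\ell_{n})\partial_{x}Q_{n}=\varphi_{2}+{\rm Mod}_{1}$, as required.

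For the second line, the key algebraic input is the elliptic identity satisfied by each Lorentz-boosted profile. Applying the change of variables $y=x/\sqrt{1-\ell_{n}^{2}}$ to the equation $Q''-Q+f(Q)=0$ from~\eqref{eq:Q}, and using that $f$ is odd together with $Q_{n}=\sigma_{n}Q_{\ell_{n}}(\cdot-y_{n})$, one obtains the pointwise identity
\[
\partial_{x}^{2}Q_{n}-Q_{n}+f(Q_{n})=\ell_{n}^{2}\,\partial_{x}^{2}Q_{n}.
\]
Summing in $n$ yields $\partial_{x}^{2}U_{1}-U_{1}=\sum_{n}\ell_{n}^{2}\partial_{x}^{2}Q_{n}-\sum_{n}f(Q_{n})$. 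Then starting from $\partial_{t}\varphi_{2}=\partial_{t}u_{2}-\partial_{t}U_{2}$, using the second NLKG equation and $\partial_{t}U_{2}=\sum_{n}\ell_{n}\dot{y}_{n}\partial_{x}^{2}Q_{n}$, I would regroup terms as
\[
\partial_{t}\varphi_{2}=\partial_{x}^{2}\varphi_{1}-\varphi_{1}+\bigl[f(U_{1}+\varphi_{1})-f(U_{1})\bigr]+\Bigl[f(U_{1})-\sum_{n}f(Q_{n})\Bigr]+\sum_{n}\ell_{n}(\ell_{n}-\dot{y}_{n})\partial_{x}^{2}Q_{n},
\]
and recognize the bracketed pieces as $G$ (by the definition~\eqref{def:UG}) and ${\rm Mod}_{2}$ (by the definition~\eqref{def:mod}) respectively.

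There is no real obstacle here: the statement is essentially a bookkeeping identity that records how much each term in $\vec{u}$ fails to solve the linearized evolution once the soliton centers are allowed to drift and the nonlinearity is expanded around the superposition. The only mildly subtle point is the sign tracking due to the parameters $\sigma_{n}=\pm 1$, which is handled by the oddness of $f$, and the bookkeeping of the two different quantities $\dot{y}_{n}$ (coming from time differentiation of $Q_{n}$) versus $\ell_{n}$ (coming from the static profile identity), whose difference produces the modulation terms.
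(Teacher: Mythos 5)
Your proposal is correct and follows essentially the same route as the paper: differentiate the decomposition in time, use the system \eqref{NLKGvec}, and invoke the boosted elliptic identity $-(1-\ell_{n}^{2})\partial_{x}^{2}Q_{n}+Q_{n}-f(Q_{n})=0$ (which you rederive via the change of variables and oddness of $f$) to regroup terms into $G$, ${\rm Mod}_{1}$ and ${\rm Mod}_{2}$. The bookkeeping of $\dot{y}_{n}$ versus $\ell_{n}$ and of the signs $\sigma_{n}$ is handled exactly as in the paper.
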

\begin{proof}
First, from the definition of $\vec{\varphi}=(\varphi_{1},\varphi_{2})$ in~\eqref{def:psi},
\begin{equation*}
\pt \vp=\pt u_{1}-\pt U_{1}=\vpp +U_{2}-\pt U_{1}=\vpp 
+\sum_{n=1}^{N}(\dot{y}_{n}-\ell_{n})\px Q_{n}.
\end{equation*}
Second, using~\eqref{NLKGvec},
\begin{equation*}
\pt \vpp=\pt u_{2}-\pt U_{2}=\px^{2}u_{1}-u_{1}+f(u_{1})-\sum_{n=1}^{N}\dot{y}_{n}\big(\ell_{n}\px^{2}Q_{n}\big).
\end{equation*}
We observe from~\eqref{def:psi} and $-(1-\ell^{2}_{n})\px^{2}Q_{n}+Q_{n}-f(Q_{n})=0$,
\begin{equation*}
\px^{2}u_{1}-u_{1}+f(u_{1})=\px ^{2}\vp-\vp +f(U_{1}+\vp)-\sum_{n=1}^{N}f(Q_{n})
+\sum_{n=1}^{N}\ell_{n}^{2}\px ^{2}Q_{n}.
\end{equation*}
Therefore, from the definition of $G$ and $\rm{Mod}_{2}$, we obtain~\eqref{syst_e}.
\end{proof}

For future reference, we present in Lemmas~\ref{le:point} and \ref{tech:le} some technical results.
First, we give several pointwise estimates concerning the nonlinearity $f$.
\begin{lemma}\label{le:point}
For any $(s_{n})_{n=1,\ldots,N}\in {\mathbb{R}}^{N}$, the following estimates hold.
\begin{enumerate}
\item For $1<q<p\le 3$, 
\begin{equation}\label{est:tay3}
\left|f'\left(\sum_{n=1}^{N}s_{n}\right)-\sum_{n=1}^{N}f'(s_{n})\right|\lesssim 
\sum_{n\ne n'}|s_{n}|^{\frac{q-1}{2}}|s_{n'}|^{\frac{q-1}{2}}+\sum_{n\ne n'}|s_{n}|^{\frac{p-1}{2}}|s_{n'}|^{\frac{p-1}{2}}.
\end{equation}
\item For $1<q\le 3<p<\infty$, 
\begin{equation}\label{est:tay4}
\left|f'\left(\sum_{n=1}^{N}s_{n}\right)-\sum_{n=1}^{N}f'(s_{n})\right|\lesssim 
\sum_{n\ne n'}|s_{n}|^{\frac{q-1}{2}}|s_{n'}|^{\frac{q-1}{2}}+\sum_{n\ne n'}|s_{n}|^{p-2}|s_{n'}|.
\end{equation}
\item For $3<q<p<\infty$, 
\begin{equation}\label{est:tay5}
\left|f'\left(\sum_{n=1}^{N}s_{n}\right)-\sum_{n=1}^{N}f'(s_{n})\right|\lesssim 
\sum_{n\ne n'}|s_{n}|^{q-2}|s_{n'}|+\sum_{n\ne n'}|s_{n}|^{p-2}|s_{n'}|.
\end{equation}
\item For $1<q<p<\infty$, 
\begin{equation}\label{est:tay2}
\left|f\left(\sum_{n=1}^{N}s_{n}\right)-\sum_{n=1}^{N}f(s_{n})\right|\lesssim \sum_{n\ne n'}|s_{n}|^{q-1}|s_{n'}|+\sum_{n\ne n'}|s_{n}|^{p-1}|s_{n'}|,
\end{equation}
\begin{equation}\label{est:tay1}
\left|F\left(\sum_{n=1}^{N}s_{n}\right)-\sum_{n=1}^{N}F(s_{n})\right|\lesssim \sum_{n\ne n'}|s_{n}|^{q}|s_{n'}|+\sum_{n\ne n'}|s_{n}|^{p}|s_{n'}|.
\end{equation}
\item For $1<q<p<\infty$,
\begin{equation}\label{est:tay6}
\left|f(s_1+s_2)-f(s_1)-f'(s_1)s_2\right|\lesssim |s_2|^2+|s_2|^q+|s_2|^p,
\end{equation}
\begin{equation}\label{est:tay7}
\left|F(s_1+s_2)-F(s_1)-f(s_1)s_2-\frac{1}{2}f'(s_1)s_2^2\right|
\lesssim |s_2|^3 + |s_2|^{q+1} + |s_2|^{p+1}.
\end{equation}
\end{enumerate}
\end{lemma}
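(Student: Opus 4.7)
The plan is to reduce every inequality to pointwise bounds on the homogeneous building blocks $s\mapsto|s|^{\alpha}$ and $s\mapsto|s|^{\alpha}s$ for the exponents $\alpha\in\{q-1,p-1,q,p,q+1,p+1\}$ that appear in $f'$, $f$ and $F$, and then to propagate them into sums. The starting point is, for $N=2$ and $\alpha>0$, the two-variable estimate
\[
\big||a+b|^{\alpha}-|a|^{\alpha}-|b|^{\alpha}\big|
\lesssim
\begin{cases}
|a|^{\alpha/2}|b|^{\alpha/2}, & 0<\alpha\le 2,\\
|a||b|^{\alpha-1}+|a|^{\alpha-1}|b|, & \alpha\ge 2,
\end{cases}
\]
together with its analogue $\big||a+b|^{\alpha}(a+b)-|a|^{\alpha}a-|b|^{\alpha}b\big|$, which obeys the same right-hand side up to relabeling $\alpha\mapsto\alpha+1$. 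To prove these I would assume without loss of generality $|a|\le|b|$: for $0<\alpha\le 1$ the subadditivity $\big||x|^{\alpha}-|y|^{\alpha}\big|\le|x-y|^{\alpha}$ bounds the difference by $|a|^{\alpha}\le|a|^{\alpha/2}|b|^{\alpha/2}$; for $\alpha\ge 1$ the mean value theorem gives $\big||a+b|^{\alpha}-|b|^{\alpha}\big|\lesssim|a|(|a|+|b|)^{\alpha-1}\lesssim|a||b|^{\alpha-1}$, which is then absorbed into $|a|^{\alpha/2}|b|^{\alpha/2}$ via $(|a|/|b|)^{1-\alpha/2}\le 1$ when $\alpha\le 2$ and kept as is when $\alpha\ge 2$.

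Next I would pass from $N=2$ to general $N$ by induction: writing $\sum_{n=1}^{N}s_{n}=s_{N}+\sum_{n=1}^{N-1}s_{n}$ after relabeling so that $|s_{N}|=\max_{n}|s_{n}|$, applying the two-variable bound to the pair $(s_{N},\sum_{n<N}s_{n})$, and expanding $|\sum_{n<N}s_{n}|^{\beta}$ either by convexity ($\beta\ge 1$) or by subadditivity ($\beta\le 1$), produces the double sums $\sum_{n\ne n'}|s_{n}|^{\cdot}|s_{n'}|^{\cdot}$ of the statement. Specializing this mechanism to $f'(s)=p|s|^{p-1}-q|s|^{q-1}$, to $f$, and to $F$, and separating the $p$-part from the $q$-part yields items~(1)--(4); the three sub-regimes in (1)--(3) correspond exactly to whether each of $p-1$ and $q-1$ lies below or above the critical threshold $\alpha=2$.

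Finally, for item~(5) I would use Taylor's formula with integral remainder along the segment from $s_{1}$ to $s_{1}+s_{2}$. Combined with the pointwise growth $|f''(s)|\lesssim 1+|s|^{q-2}+|s|^{p-2}$ at points where $f''$ exists, this yields
\[
\big|f(s_{1}+s_{2})-f(s_{1})-f'(s_{1})s_{2}\big|
\lesssim s_{2}^{2}\int_{0}^{1}\big(1+|s_{1}+\tau s_{2}|^{q-2}+|s_{1}+\tau s_{2}|^{p-2}\big)\d\tau,
\]
and one concludes \eqref{est:tay6} by splitting into the cases $|s_{1}|\le|s_{2}|$ versus $|s_{1}|\ge|s_{2}|$ and, in the former, bounding $|s_{1}+\tau s_{2}|^{p-2}s_{2}^{2}\lesssim|s_{2}|^{p}$ and similarly with $q$ in place of $p$. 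The third-order Taylor expansion of $F$ giving \eqref{est:tay7} is handled identically via $F'''=f''$. The main technical obstacle is the subcritical regime $1<q<2$, where $f''$ is singular at the origin and the naive pointwise bound on $f''$ fails to be integrable inside the remainder; there one must instead invoke the H\"older continuity $|f'(x)-f'(y)|\lesssim|x-y|^{q-1}+|x-y|^{p-1}$ of $f'$ and route the Taylor remainder through this bound in order to reach \eqref{est:tay6}.
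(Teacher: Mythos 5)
For items (i)--(iv) your argument is correct, and it takes a mildly different route from the paper: you prove a clean two-variable estimate for $\big||a+b|^{\alpha}-|a|^{\alpha}-|b|^{\alpha}\big|$ (with the dichotomy at $\alpha=2$, via subadditivity for $\alpha\le 1$ and the mean value theorem for $\alpha\ge 1$) and then induct on $N$, expanding $|\sum_{n<N}s_n|^{\beta}$ by subadditivity or convexity. The paper instead works in one step: it isolates the term $s_m$ of maximal modulus, splits off $\sum_{n\ne m}|s_n|^{p-1}$ by the triangle inequality, and treats $\big||\sum_n s_n|^{p-1}-|s_m|^{p-1}\big|$ by the case analysis $|s_m|\le 2\sum_{n\ne m}|s_n|$ versus the opposite, using $|(1+\mu)^{p-1}-1|\lesssim|\mu|$ for $|\mu|\le\frac12$ and the observation $p-2\le\frac{p-1}{2}$ when $p\le 3$. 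Both are elementary and yield the same trichotomy of exponents; your version buys a reusable two-variable lemma and a more systematic passage to general $N$, the paper's version avoids induction. Either is acceptable.

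For item (v) there is a gap in your write-up, though it is partly inherited from the statement itself (the paper only says the proof ``follows from the same technique''). Your Taylor-with-integral-remainder bound gives $s_2^2\int_0^1|s_1+\tau s_2|^{p-2}\,\d\tau$, and you only close the case $|s_1|\le|s_2|$; in the complementary case $|s_1|\ge|s_2|$ this term is of size $|s_1|^{p-2}s_2^2$, which is \emph{not} controlled by $|s_2|^2+|s_2|^q+|s_2|^p$ when $p>2$ and $|s_1|$ is large (take $p>2$, $s_2=1$, $s_1\to\infty$: the left-hand side of \eqref{est:tay6} grows like $|s_1|^{p-2}$). So \eqref{est:tay6}--\eqref{est:tay7}, read literally for all reals, are false for exponents above $2$, and no proof can avoid using a bound on $|s_1|$; in the paper they are applied only with $s_1=U_1$, which is uniformly bounded, so the correct fix is to prove (and state) them with the implicit constant depending on an a priori bound for $|s_1|$, which your latter case then closes trivially. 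Relatedly, your claimed global H\"older bound $|f'(x)-f'(y)|\lesssim|x-y|^{q-1}+|x-y|^{p-1}$ holds for the $|s|^{q-1}$-part when $q\le 2$ (which is exactly where you need it, and this is the right way to handle the singularity of $f''$ at $0$), but it fails globally for the $|s|^{p-1}$-part when $p>2$; restrict that part of the claim to the low exponent, or to bounded sets, and your argument for (v) is complete.
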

\begin{proof}

Proof of (i)-(ii)-(iii).
Recall that $f'(s)=p|s|^{p-1}-q |s|^{q-1}$ for $p,q$ such that $1<q<p<\infty$.
Let $(s_{n})_{n=1,\ldots,N}\in \mathbb{R}^{N}$ and let $m\in \{1,\ldots,N\}$ be such that 
\begin{equation*}
|s_{m}|=\max_{n=1,\ldots,N}|s_{n}|.
\end{equation*}
By the triangle inequality,
\[
 \bigg| \Big|\sum_{n=1}^{N}s_{n}\Big|^{p-1}-\sum_{n=1}^N |s_{n}|^{p-1}\bigg|
 \lesssim 
\sum_{n\neq m} |s_n|^{p-1} + \bigg| \Big|\sum_{n=1}^{N}s_{n}\Big|^{p-1}-|s_{m}|^{p-1}\bigg|.
\]
First, by the definition of $m$, we observe that 
\[
\sum_{n\neq m} |s_n|^{p-1} \lesssim 
\begin{cases}
|s_{m}|^{\frac{p-1}{2}} \sum_{n\ne m}|s_{n}|^{\frac{p-1}{2}} & \mbox{if $1<p\le 3$}\\
|s_{m}|^{p-2} \sum_{n\ne m}|s_{n}| & \mbox{if $3<p$}.
\end{cases}
\]
Second, 
if $|s_{m}|\le 2 \sum\limits_{n\ne m}|s_{n}| $, then
\begin{equation*}
\bigg| \Big|\sum_{n=1}^{N}s_{n}\Big|^{p-1}-|s_{m}|^{p-1}\bigg|
\lesssim |s_{m}|^{p-1}
\lesssim 
\begin{cases}
|s_{m}|^{\frac{p-1}{2}} \sum_{n\ne m}|s_{n}|^{\frac{p-1}{2}} & \mbox{if $1<p\le 3$}\\
|s_{m}|^{p-2} \sum_{n\ne m}|s_{n}| & \mbox{if $3<p$}.
\end{cases}
\end{equation*}
Third, if $|s_{m}|> 2 \sum_{n\ne m}|s_{n}|$ then by the inequality $|(1+\mu)^{p-1} -1|\lesssim |\mu|$
for any $|\mu|\leq \frac 12$, we have
\begin{equation*}
 \bigg| \Big|\sum_{n=1}^{N}s_{n}\Big|^{p-1}-|s_{m}|^{p-1}\bigg|
=|s_{m}|^{p-1}\bigg|\bigg(1+\frac1 {s_{m}}{\sum_{n\ne m}s_{n}}\bigg)^{p-1}-1\bigg|
\lesssim|s_{m}|^{p-2} \sum_{n\ne m} |s_{n}|.
 \end{equation*}
Moreover, note that if $1\leq p\leq 3$ then $p-2\leq \frac{p-1}{2}$ and thus by the definition of $m$,
\[
|s_{m}|^{p-2} \sum_{n\ne m} |s_{n}| \lesssim |s_{m}|^{\frac{p-1}{2}} \sum_{n\neq m} |s_{n}|^{\frac{p-1}{2}}.
\]
Proceeding similarly for the term $|s|^{q-1}$ in $f'(s)$, we obtain (i), (ii) and (iii).

The proofs of (iv) and (v) follow from the same technique. 
\end{proof}
Second, using the pointwise estimates of Lemma~\ref{le:point}, we derive estimates related to the equation of $\vec{\varphi}$ and the nonlinear interaction term $G$ defined in \eqref{def:UG}.
Denote 
\begin{equation*}
q^{*}=\begin{cases}
 \frac{q-1}{2} & \mbox{for}\ 1<q\le 3\\
 q-2 & \mbox{for}\ 3<q<\infty
\end{cases}
 \quad\mbox{and}\quad
p^{*}= \begin{cases}
 \frac{p-1}{2} & \mbox{for}\ 1<p\le 3\\
 p-2 & \mbox{for}\ 3<p<\infty
\end{cases}
\end{equation*}
and observe that $0<q^*<p^*$.
Fix
\begin{equation}\label{def:gam0}
\gamma_{0}
=\frac{1}{8} \min\left(q^{*},1\right)>0\quad \mbox{and let}\ \ \theta=\sum_{n=1}^{N-1}e^{-4\gamma_{0}(y_{n+1}-y_{n})}.
\end{equation}
The introduction of $\gamma_0$ and $\theta$ will allow us to keep track of the exponent of 
exponential interactions between solitons (for instance, see the next lemma).
We will not keep track of this dependence in multiplicative constants.

\begin{lemma}\label{tech:le}
Assume~\eqref{dis:y}, we have
\begin{equation}\label{tech1}
\sum_{n\ne n'}\int_{\R}\left(\left|Q_{n}Q_{n'}\right|+\left|\px Q_{n}\px Q_{n'}\right|+\left|\px^{2} Q_{n}\px^{2} Q_{n'}\right|\right)\d x\lesssim \theta,
\end{equation}
\begin{equation}\label{tech4}
\sum_{n\ne n'}\int_{\RR}\left(\left|\px Q_{n}\Upsilon_{n',1}\right|+\left|\partial_{x}^{2}Q_{n}\Upsilon_{n',2}\right|\right)\d x\lesssim \theta,
\end{equation}
\begin{equation}\label{tech2}
\int_{\R}\bigg|F(U_{1})-\sum_{n=1}^{N}F(Q_{n})\bigg|\d x\lesssim \theta,
\end{equation}
\begin{equation}\label{tech3}
\|G\|_{L^{2}}+\sum_{n\ne n'}\|f'(Q_{n})\Upsilon_{n',2}\|_{L^{2}}+\|f'(U_{1})-\sum_{n=1}^{N}f'(Q_{n})\|_{L^{2}}\lesssim \theta.
\end{equation}
\end{lemma}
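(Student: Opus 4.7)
The plan is to reduce all four estimates to a single analytic input: an integral of a product of two exponentially decaying functions centered at distant points $y_n \ne y_{n'}$ decays exponentially in the gap $|y_n-y_{n'}|$. The pointwise bounds \eqref{Qdec}, \eqref{est:decayY}, and \eqref{est:decayUpsilon}, combined with the fact that the Lorentz rescaling $x\mapsto x/\sqrt{1-\ell_n^2}$ only strengthens decay (since $\sqrt{1-\ell_n^2}\le 1$), furnish estimates of the form $|\cdot|\lesssim e^{-|x-y_n|}$ for $Q_n$, its derivatives, and $\Upsilon_{n,k}^{\pm}$. The basic integration lemma I would record first is
\[
\int_{\R}e^{-\alpha|x-y_n|}e^{-\beta|x-y_{n'}|}\,\d x \;\lesssim\; (1+|y_n-y_{n'}|)\,e^{-\min(\alpha,\beta)\,|y_n-y_{n'}|},
\]
valid for $\alpha,\beta>0$. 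Combining with the ordering $|y_{n'}-y_n|\ge y_{n+1}-y_n$ whenever $n'>n$, and exploiting that $4\gamma_0\le \tfrac12<1$ so that the polynomial factor can be absorbed into the exponential, each such integral is $\lesssim e^{-4\gamma_0(y_{n+1}-y_n)}\le\theta$.

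Items (1) and (2) follow immediately from this lemma with $\alpha,\beta\ge 1$. For (3), I would apply the pointwise estimate \eqref{est:tay1} with $s_n = Q_n(x)$, producing integrands of the form $|Q_n|^{q}|Q_{n'}|$ and $|Q_n|^{p}|Q_{n'}|$, each with effective decay rate $\min(q,1)=1$ (resp.\ $\min(p,1)=1$), hence bounded by $\theta$. For the three contributions in (4), I would proceed term by term: for $\|G\|_{L^2}$ I would bound $|G|^2$ pointwise via \eqref{est:tay2}, yielding cross terms such as $|Q_n|^{2(q-1)}|Q_{n'}|^2$ whose integral decays at rate $\ge 2$; for $\|f'(Q_n)\Upsilon_{n',2}\|_{L^2}$ I would use $|f'(Q_n)|\lesssim|Q_n|^{q-1}+|Q_n|^{p-1}$ together with the decay of $\Upsilon_{n',2}$; and for the last term I would invoke \eqref{est:tay3}, \eqref{est:tay4}, or \eqref{est:tay5} according to the three regimes $p\le 3$, $q\le 3<p$, and $3<q$, producing symmetric products that again reduce to the integration lemma.

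The one place where the specific definition $\gamma_0=\tfrac18\min(q^*,1)$ is genuinely used — and the step I expect to require the most care — is verifying the $L^2$-decay rate for the critical term in (4): in the regime $1<q\le 3$, the product $|Q_n|^{(q-1)/2}|Q_{n'}|^{(q-1)/2}$ produces an $L^2$ decay rate of $(q-1)/2$, which must be compared against $4\gamma_0=(q-1)/4$. This is the borderline case and dictates the choice of $\gamma_0$; every other term (higher derivatives, higher powers of $p$, or the regime $q>3$) has strictly larger decay rate and is harmless. Once this one case-by-case check is done, all the remaining estimates are mechanical consequences of the integration lemma together with the pointwise bounds of Lemma~\ref{le:point}.
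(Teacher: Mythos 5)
Your proposal is correct and follows essentially the same route as the paper: pointwise exponential bounds for $Q_{n}$, its derivatives and $\Upsilon_{n',k}^{\pm}$, the pointwise estimates of Lemma~\ref{le:point}, and a two-bump interaction integral, with the choice $\gamma_{0}=\frac18\min(q^{*},1)$ supplying exactly the margin you identify for the $f'$-difference term in the regime $1<q\le 3$. The only cosmetic differences are that the paper evaluates the interaction integrals by splitting the domain at the midpoint between $y_{n}$ and $y_{n'}$ (obtaining the rate $\tfrac12$ directly, with no polynomial prefactor) rather than quoting your convolution-type lemma, and your claimed decay rate ``$\ge 2$'' for the squared cross terms in $\|G\|_{L^{2}}$ should read $\min(2(q-1),2)$, which still comfortably exceeds $8\gamma_{0}$, so the conclusion is unaffected.
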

\begin{remark}
For convenience, all the terms above are bounded by the same error term $\sum_{n=1}^{N-1}e^{-4\gamma_0(y_{n+1}-y_{n})}$. Of course, in view of the definition of $\gamma_0$, these estimates are far from being optimal.
\end{remark}
\begin{proof}[Proof of Lemma~\ref{tech:le}]
\emph{Proof of~\eqref{tech1}}. First, by change of variable, for any $n\ne n'$,
\begin{equation*}
\int_{\RR}|Q_{n}Q_{n'}|\d x =H_{1}+H_{2},
\end{equation*}
where
\begin{align*}
&H_{1}=(1-\ell_{n}^{2})^{\frac{1}{2}}\int_{I_{1}}Q(x)Q\left(\frac{(y_{n}-y_{n'})+(1-\ell^{2}_{n})^{\frac{1}{2}}x}{(1-\ell^{2}_{n'})^{\frac{1}{2}}}\right)\d x,\\
&H_{2}=(1-\ell_{n}^{2})^{\frac{1}{2}}\int_{I_{2}}Q(x)Q\left(\frac{(y_{n}-y_{n'})+(1-\ell^{2}_{n})^{\frac{1}{2}}x}{(1-\ell^{2}_{n'})^{\frac{1}{2}}}\right)\d x,
\end{align*}
and
\begin{align*}
&I_{1}=\left\{x\in \RR: (1-\ell^{2}_{n})^{\frac{1}{2}} |x|<\frac{1}{2}|y_{n}-y_{n'}| \right\},\\
&I_{2}=\left\{x\in \RR: (1-\ell^{2}_{n})^{\frac{1}{2}} |x|\ge \frac{1}{2}|y_{n}-y_{n'}| \right\}.
\end{align*}
From the decay properties of $Q$ in~\eqref{Qdec} and then $\gamma_{0}\le \frac 18\le \frac 18(1-\ell_{n}^{2})^{-\frac{1}{2}}$, for any $x\in I_{2}$, we have 
\begin{equation*}
\left|Q\left(x\right)\right|\lesssim e^{-\frac12\frac{|y_{n}-y_{n'}|}{\sqrt{1-\ell_n^2}}} \lesssim e^{-4\gamma_0|y_{n}-y_{n'}|},
\end{equation*}
and so
\begin{equation*}
H_{2}\lesssim e^{-4\gamma_{0}|y_{n}-y_{n'}|}\int_{I_{2}}Q\left(\frac{(y_{n}-y_{n'})+(1-\ell^{2}_{n})^{\frac{1}{2}}x}{(1-\ell^{2}_{n'})^{\frac{1}{2}}}\right)\d x\lesssim e^{-4\gamma_{0}|y_{n}-y_{n'}|}\lesssim \theta.
\end{equation*}
Similarly, for any $x\in I_1$,
\begin{equation*}
\left|Q\left(\frac{(y_{n}-y_{n'})+(1-\ell^{2}_{n})^{\frac{1}{2}}x}{(1-\ell^{2}_{n'})^{\frac{1}{2}}}\right)\right|\lesssim e^{-4\gamma_0|y_{n}-y_{n'}|},
\end{equation*}
and so
\begin{equation*}
H_{1}\lesssim e^{-4\gamma_0|y_{n}-y_{n'}|}\int_{I_{1}}Q(x)\d x\lesssim e^{-4\gamma_{0}|y_{n}-y_{n'}|}\lesssim \theta.
\end{equation*}

This proves estimate~\eqref{tech1} for $Q_{n}Q_{n'}$. Using~\eqref{Qdec}, the rest of the proof of~\eqref{tech1} is the same.

\emph{Proof of~\eqref{tech4}}. From~\eqref{Qdec},~\eqref{est:decayUpsilon} and $(1-\ell_{n}^{2})^{-\frac{1}{2}}\ge 1$, we have 
\begin{equation*}
\sum_{n\ne n'}\left|\px Q_{n}\Upsilon_{n',1}\right|+\left|\partial_{x}^{2}Q_{n}\Upsilon_{n',2}\right|\lesssim
\sum_{n\ne n'}e^{-|x-y_{n}|}e^{-|x-y_{n'}|}.
\end{equation*}
Arguing as in the proof of~\eqref{tech1}, using $\gamma_0=\frac 18 \min(q^*,1)\le \frac{1}{8}$, we obtain~\eqref{tech4}.

\emph{Proof of~\eqref{tech2}}. From~\eqref{Qdec},~\eqref{est:tay1} and $1<q<p<\infty$, we infer
\begin{equation*}
\left|F(U_{1})-\sum_{n=1}^{N}F(Q_{n})\right|
\lesssim \sum_{n\ne n'}|Q_{n}|^{q}|Q_{n'}|+\sum_{n\ne n'}|Q_{n}|^{p}|Q_{n'}|
\lesssim \sum_{n\ne n'} |Q_{n}||Q_{n'}|.
\end{equation*}
Thus, estimate ~\eqref{tech2} follows from~\eqref{tech1}.

\emph{Proof of~\eqref{tech3}}. From~\eqref{est:tay2},
we have
\begin{equation*}
|G|\lesssim \sum_{n\ne n'}|Q_{n}|^{q-1}|Q_{n'}|+\sum_{n\ne n'}|Q_{n}|^{p-1}|Q_{n'}|.
\end{equation*}
By $1<q<p$,~\eqref{Qdec} and~\eqref{est:decayUpsilon},
\begin{equation*}
\sum_{n\ne n'}\left|f'(Q_{n})\Upsilon_{n',2}\right|\lesssim \sum_{n\ne n'}|Q_{n}|^{q-1}e^{-|x-y_{n'}|}.
\end{equation*}
Moreover, from~\eqref{est:tay3},~\eqref{est:tay4},~\eqref{est:tay5}, we infer,
for $1<q<p\leq 3$,
\begin{equation*}
\bigg|f'(U_{1})-\sum_{n=1}^{N}f'(Q_{n})\bigg|\lesssim
\sum_{n\ne n'}|Q_{n}|^{\frac{q-1}{2}}|Q_{n'}|^{\frac{q-1}{2}}+\sum_{n\ne n'}|Q_{n}|^{\frac{p-1}{2}}|Q_{n'}|^{\frac{p-1}{2}},
\end{equation*}
for $1<q\leq 3 <p$,
\begin{equation*}
\bigg|f'(U_{1})-\sum_{n=1}^{N}f'(Q_{n})\bigg|\lesssim
\sum_{n\ne n'}|Q_{n}|^{\frac{q-1}{2}}|Q_{n'}|^{\frac{q-1}{2}}+\sum_{n\ne n'}|Q_{n}|^{p-2}|Q_{n'}|,
\end{equation*}
and last, for $3<q<p$,
\begin{equation*}
\bigg|f'(U_{1})-\sum_{n=1}^{N}f'(Q_{n})\bigg|\lesssim
\sum_{n\ne n'}|Q_{n}|^{q-2}|Q_{n'}|+\sum_{n\ne n'}|Q_{n}|^{p-2}|Q_{n'}|.
\end{equation*}

Arguing as in the proof of~\eqref{tech1}, using $\gamma_0= \frac 18 \min(q^*,1)$, we obtain~\eqref{tech3}.
\end{proof}
Third, we derive the control of $\dot y_n$ from the orthogonality conditions~\eqref{phiorth}.
\begin{lemma}[Control of $\dot y_n$] \label{le:esty}
In the context of Lemma~\ref{le:decom}, we have 
\begin{equation}\label{est:y}
\sum_{n=1}^{N}\big|\dot{y}_{n}-\ell_{n}\big|\lesssim \|\vec{\varphi}\|_{\E}+\theta.
\end{equation}
\end{lemma}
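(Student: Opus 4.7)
The plan is to differentiate in time the $N$ orthogonality conditions \eqref{phiorth}, substitute the evolution equation \eqref{syst_e} for $\partial_t \vec\varphi$, and read off a nearly diagonal linear system on the vector $(\dot y_n-\ell_n)_{n=1,\ldots,N}$ whose right-hand side is controlled by $\|\vec\varphi\|_{\mathcal H}+\theta$.

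Concretely, since $\vec Z_n^0(t,x)=\sigma_n\vec Z_{\ell_n}^0(x-y_n(t))$, differentiating $(\vec\varphi,\vec Z_n^0)_{L^2}=0$ yields
\begin{equation*}
\big(\partial_t\vec\varphi,\vec Z_n^0\big)_{L^2}-\dot y_n\big(\vec\varphi,\partial_x\vec Z_n^0\big)_{L^2}=0.
\end{equation*}
Plugging in \eqref{syst_e} and splitting $\text{Mod}_1$ and $\text{Mod}_2$ according to \eqref{def:mod}, the contribution of the modulation terms reads
\begin{equation*}
\sum_{n'=1}^{N}(\dot y_{n'}-\ell_{n'})\,M_{n,n'},\qquad M_{n,n'}=\big(\partial_x Q_{n'},(\vec Z_n^0)_1\big)_{L^2}-\ell_{n'}\big(\partial_x^2 Q_{n'},(\vec Z_n^0)_2\big)_{L^2}.
\end{equation*}
By the definition of $\vec Z_n^0$ and a change of variable, the diagonal entry is
$M_{n,n}=\|\partial_x Q_{\ell_n}\|_{L^2}^2+\ell_n^2\|\partial_x^2 Q_{\ell_n}\|_{L^2}^2$, which is a strictly positive constant depending only on $\ell_n$. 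The off-diagonal entries $M_{n,n'}$ with $n\neq n'$ are $O(\theta)$ by \eqref{tech1}. Hence the matrix $M=(M_{n,n'})$ is diagonally dominant as soon as $\theta$ (equivalently $L_0$) is chosen small enough (resp. large enough), so $M$ is invertible with $\|M^{-1}\|\lesssim 1$.

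It remains to bound the other terms produced by the substitution. The linear contributions of $\varphi_2$ and $\partial_x^2\varphi_1-\varphi_1$ against the smooth, exponentially localized functions $(\vec Z_n^0)_k$ are $O(\|\vec\varphi\|_{\mathcal H})$ after one integration by parts. The nonlinear term $f(U_1+\varphi_1)-f(U_1)$ is controlled pointwise by $(|U_1|^{p-1}+|U_1|^{q-1}+|\varphi_1|^{p-1}+|\varphi_1|^{q-1})|\varphi_1|$; paired against the localized profile $(\vec Z_n^0)_2$ and using the $1$D embedding $H^1\hookrightarrow L^\infty$ together with the smallness $\|\vec\varphi\|_{\mathcal H}\le\delta_0$, this yields an $O(\|\vec\varphi\|_{\mathcal H})$ bound. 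The interaction term $G$ contributes $O(\theta)$ by \eqref{tech3}, and finally the term $\dot y_n(\vec\varphi,\partial_x\vec Z_n^0)_{L^2}$, written as $\ell_n(\vec\varphi,\partial_x\vec Z_n^0)_{L^2}+(\dot y_n-\ell_n)(\vec\varphi,\partial_x\vec Z_n^0)_{L^2}$, splits into an $O(\|\vec\varphi\|_{\mathcal H})$ source term and a contribution of size $O(\|\vec\varphi\|_{\mathcal H}\,|\dot y_n-\ell_n|)$ that is absorbed by the diagonal part of $M$ when $\delta_0$ is small.

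Combining these estimates gives the linear system $M\,(\dot y-\ell)=R$ with $|R|\lesssim\|\vec\varphi\|_{\mathcal H}+\theta$, and inverting $M$ yields \eqref{est:y}. The only delicate point I expect is the nonlinear estimate on $f(U_1+\varphi_1)-f(U_1)$: one must check that the super-linear terms in $\varphi_1$ (present when $p$ or $q$ is large) are indeed absorbed by the factor $\|\vec\varphi\|_{\mathcal H}\le\delta_0$ after using the $L^\infty$ embedding, so that the final bound is linear in $\|\vec\varphi\|_{\mathcal H}$ as stated.
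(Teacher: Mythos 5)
Your proposal is correct and follows essentially the same route as the paper: differentiate the orthogonality conditions \eqref{phiorth}, substitute \eqref{syst_e}, isolate the modulation contribution as a near-diagonal system with diagonal entries $\|\partial_x Q_{\ell_n}\|_{L^2}^2+\ell_n^2\|\partial_x^2Q_{\ell_n}\|_{L^2}^2$ and $O(\theta)$ off-diagonal entries, bound the remaining linear, nonlinear and interaction terms by $\|\vec{\varphi}\|_{\E}+\theta$, and absorb the $O((\|\vec{\varphi}\|_{\E}+\theta)\sum|\dot y_{n'}-\ell_{n'}|)$ corrections for $\delta_0$ small and $L_0$ large. The only cosmetic differences are that you invert the matrix explicitly where the paper sums over $n$ and absorbs, and you estimate $f(U_1+\varphi_1)-f(U_1)$ directly (which works, as your $L^\infty$ argument shows) instead of splitting it into the paper's $R_1$ and $R_2$ terms.
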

\begin{proof}
First, we rewrite the equation of $\vec{\varphi}=(\vp,\vpp)$ as 
\begin{equation}\label{equ:vvp}
\pt \vec{\varphi}=\vec{\mathcal{L}}\vec{\varphi}+\vec{\rm{Mod}}+\vec{G}+\vec{R}_{1}+\vec{R}_{2},
\end{equation}
 where
 \begin{equation*}
 \vec{\mathcal{L}}=\left(\begin{array}{cc}0 & 1 \\ \partial_{x}^{2}-1+\sum_{n=1}^{N}f'(Q_{n}) & 0\end{array}\right),\quad 
 \vec{\rm{Mod}}=\left(\begin{array}{c} {\rm{Mod}_{1}} \\ {\rm{Mod}_{2}}\end{array}\right),
 \end{equation*}
 \begin{equation*}
 \vec{G}=\left(\begin{array}{c} 0 \\ G\end{array}\right),\quad \vec{R}_{1}=\left(\begin{array}{c}
 0\\ R_{1} \end{array}\right)
 =\left(\begin{array}{c}
 0\\ f(U_{1}+\vp)-f(U_{1})-f'(U_{1})\vp
 \end{array}\right),
 \end{equation*}
 and
 \begin{equation*}
 \vec{R}_{2}=\left(\begin{array}{c}
 0\\ R_{2} \end{array}\right)
 =\left(\begin{array}{c}
 0\\ \left(f'(U_{1})-\sum_{n=1}^{N}f'(Q_{n})\right)\vp
 \end{array}\right).
 \end{equation*}
 Second, from the orthogonality conditions~\eqref{phiorth},
 \begin{equation*}
 0=\frac{\rm{d}}{{\rm{d}} t}\left(\vec{\varphi},\vec{Z}_{n}^{0}\right)_{L^{2}}=\left(\pt \vec{\varphi},\vec{Z}_{n}^{0}\right)_{L^{2}}+\left(\vec{\varphi},\pt \vec{Z}_{n}^{0}\right)_{L^{2}}.
 \end{equation*}
 Thus, using~\eqref{equ:vvp},
 \begin{equation*}
 \begin{aligned}
 0=&\left(\vec{\mathcal{L}}\vec{\varphi},\vec{Z}_{n}^{0}\right)_{L^{2}}
 +\left(\vec{R}_{1},\vec{Z}_{n}^{0}\right)_{L^{2}}+\left(\vec{R}_{2},\vec{Z}_{n}^{0}\right)_{L^{2}}
 +\left(\vec{G},\vec{Z}_{n}^{0}\right)_{L^{2}}
 +\left(\vec{{\rm{Mod}}},\vec{Z}_{n}^{0}\right)_{L^{2}}\\
 &-(\dot{y}_{n}-\ell_{n})\left(\vec{\varphi},\px \vec{Z}_{n}^{0}\right)_{L^{2}}
 -\ell_{n}\left(\vec{\varphi},\px \vec{Z}_{n}^{0}\right)_{L^{2}}.
 \end{aligned}
 \end{equation*}
By integration by parts and the decay properties of $Q$ in~\eqref{Qdec}, the first term is 
 \begin{equation*}
 \left(\vec{\mathcal{L}}\vec{\varphi},\vec{Z}_{n}^{0}\right)_{L^{2}}
 =\left(\vec{\varphi},\vec{\mathcal{L}}^{t}\vec{Z}_{n}^{0}\right)_{L^{2}}=O(\|\vec{\varphi}\|_{\E}).
 \end{equation*}
 Next, by \eqref{est:tay6}, we infer
 \begin{equation*}
| R_{1}|\lesssim |\vp|^{2}+|\vp|^{q}+|\vp|^{p},
 \end{equation*}
 and by the Sobolev embedding theorem,
 \begin{equation*}
 \left|\left(\vec{R}_{1},\vec{Z}_{n}^{0}\right)_{L^{2}}\right|\lesssim \|\vec{\varphi}\|_{\E}^{2}+\|\vec{\varphi}\|_{\E}^{q}+\|\vec{\varphi}\|_{\E}^{p}.
 \end{equation*}
 Using the Cauchy-Schwarz inequality and~\eqref{tech3}, we obtain
 \begin{equation*}
\left|\left(\vec{R}_{2},\vec{Z}_{n}^{0}\right)_{L^{2}}\right|\lesssim \|\vec{\varphi}\|_{\E}\|f'(U_{1})-\sum_{n'=1}^{N}f'(Q_{n'})\|_{L^{2}}\lesssim \|\vec{\varphi}\|_{\E}^{2}+\theta^2.
 \end{equation*}
 Then, using~\eqref{tech3}, we have
 \begin{equation*}
 \big|\left(\vec{G},\vec{Z}_{n}^{0}\right)_{L^{2}}\big|\lesssim \|G\|_{L^{2}}
 \lesssim \theta.
 \end{equation*}
 Next, using the expression of $\vec{\rm{Mod}}$,
 \begin{equation*}
 \left(\vec{{\rm{Mod}}},\vec{Z}_{n}^{0}\right)_{L^{2}}
=(\dot{y}_{n}-\ell_{n})\left(\vec{Z}_{n}^{0},\vec{Z}_{n}^{0}\right)_{L^{2}}
 +\sum_{n'\ne n}(\dot{y}_{n'}-\ell_{n'})\left(\vec{Z}_{n'}^{0},\vec{Z}_{n}^{0}\right)_{L^{2}}.
 \end{equation*}
 Moreover, from~\eqref{tech1}, 
 \begin{equation*}
\sum_{n'\ne n}(\dot{y}_{n'}-\ell_{n'})\left(\vec{Z}_{n'}^{0},\vec{Z}_{n}^{0}\right)_{L^{2}}
=O\bigg( \theta \sum_{n'\ne n }|\dot{y}_{n'}-\ell_{n'}|\bigg).
 \end{equation*}
 Last, by the Cauchy-Schwarz inequality,
 \begin{equation*}
 \left|(\dot{y}_{n}-\ell_{n})\left(\vec{\varphi},\px \vec{Z}^{0}_{n}\right)_{L^{2}}\right|
 +\left|\ell_{n}\left(\vec{\varphi},\px \vec{Z}^{0}_{n}\right)_{L^{2}}\right|
 \lesssim \|\vec{\varphi}\|_{\E}\left(1+\left|\dot{y}_{n}-\ell_{n}\right|\right).
 \end{equation*}
 Gathering above estimates, we obtain
 \begin{equation*}
 \big|\dot{y}_{n}-\ell_{n}\big|
 \lesssim \|\vec{\varphi}\|_{\E}+\theta 
 +O\bigg( (\theta+\|\vec{\varphi}\|_{\E}) \sum_{n'=1}^{N}|\dot{y}_{n'}-\ell_{n'}|\bigg) .
 \end{equation*}
 Summing over $n$, we find
 \begin{equation*}
 \sum_{n=1}^{N}\big|\dot{y}_{n}-\ell_{n}\big|
 \lesssim \|\vec{\varphi}\|_{\E}+\theta 
 +O\bigg( (\theta+\|\vec{\varphi}\|_{\E}) \sum_{n'=1}^{N}|\dot{y}_{n'}-\ell_{n'}|\bigg),
 \end{equation*}
 which implies~\eqref{est:y} for $\delta_{0}$ small enough and $L_{0}$ large enough.
\end{proof}
Last, we consider the equation of the exponential directions.
\begin{lemma}[Exponential directions]\label{le:estda}
In the context of Lemma~\ref{le:decom}, let $a^{\pm}_{n}=\big(\vec{\varphi},\vec{Z}_{n}^{\pm}\big)_{L^{2}}$ for all $n=1,\ldots,N$. It holds
\begin{equation}\label{ode:z}
\bigg|\frac{\rm{d}}{\rm{d}t}a_{n}^{\pm} \mp \alpha_{n}a_{n}^{\pm}\bigg|\lesssim \|\vec{\varphi}\|_{\E}^{2}+\|\vec{\varphi}\|_{\E}^{q}+\theta,
\end{equation}
where $\alpha_{n}=\nu_{0}(1-\ell_{n}^{2})^{\frac{1}{2}}$.
\end{lemma}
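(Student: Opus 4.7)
The plan is to differentiate $a_n^\pm$ in time, substitute the equation \eqref{equ:vvp} for $\partial_t \vec{\varphi}$ together with the transport identity $\partial_t \vec{Z}_n^\pm = -\dot{y}_n \partial_x \vec{Z}_n^\pm$, and then read off the main term $\pm \alpha_n a_n^\pm$ from the eigenvalue relation $\mathcal{H}_{\ell_n} J \vec{Z}_{\ell_n}^\pm = \mp \alpha_n \vec{Z}_{\ell_n}^\pm$ in \eqref{idenZ}. After one integration by parts on the $\partial_x^2 \varphi_1$ term, the linear contribution reads $\int \varphi_2 \Upsilon_{n,1}^\pm + \int \varphi_1 (\partial_x^2 - 1 + f'(Q_n)) \Upsilon_{n,2}^\pm + \int \varphi_1 (f'(U_1) - f'(Q_n)) \Upsilon_{n,2}^\pm$. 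Unfolding \eqref{idenZ} into the two scalar identities $(\partial_x^2 - 1 + f'(Q_n))\Upsilon_{n,2}^\pm = \pm\alpha_n \Upsilon_{n,1}^\pm + \ell_n \partial_x \Upsilon_{n,1}^\pm$ and $\Upsilon_{n,1}^\pm = \pm\alpha_n \Upsilon_{n,2}^\pm + \ell_n \partial_x \Upsilon_{n,2}^\pm$, then combining the resulting $\ell_n \partial_x$ terms with the shift $-\dot{y}_n (\vec{\varphi}, \partial_x \vec{Z}_n^\pm)_{L^2}$, yields exactly $\pm \alpha_n a_n^\pm - (\dot{y}_n - \ell_n)(\vec{\varphi}, \partial_x \vec{Z}_n^\pm)_{L^2}$; the second piece is admissible via Cauchy--Schwarz and the bound $|\dot{y}_n - \ell_n| \lesssim \|\vec{\varphi}\|_{\E} + \theta$ from Lemma~\ref{le:esty}.

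The remaining terms are bounded as follows. For $\int R_1 \Upsilon_{n,2}^\pm$, the pointwise bound \eqref{est:tay6} together with Sobolev embedding and the pointwise decay of $\Upsilon_{n,2}^\pm$ gives $\lesssim \|\vec{\varphi}\|_{\E}^2 + \|\vec{\varphi}\|_{\E}^q + \|\vec{\varphi}\|_{\E}^p$, the last being absorbed in $\|\vec{\varphi}\|_{\E}^q$ by smallness. The interaction term $(\vec{G}, \vec{Z}_n^\pm)_{L^2}$ is $O(\theta)$ by Cauchy--Schwarz and \eqref{tech3}. The cross-term $\int \varphi_1 (f'(U_1) - f'(Q_n)) \Upsilon_{n,2}^\pm$ is handled by splitting $f'(U_1) - f'(Q_n) = [f'(U_1) - \sum_{n'} f'(Q_{n'})] + \sum_{n' \ne n} f'(Q_{n'})$ and applying the two $L^2$ bounds in \eqref{tech3}, producing $O(\|\vec{\varphi}\|_{\E} \theta) = O(\|\vec{\varphi}\|_{\E}^2 + \theta)$.

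The step I expect to be the main obstacle is the treatment of the modulation contribution $\int {\rm Mod}_1 \Upsilon_{n,1}^\pm + \int {\rm Mod}_2 \Upsilon_{n,2}^\pm$: its diagonal $n' = n$ piece carries a factor $(\dot{y}_n - \ell_n)$, which Lemma~\ref{le:esty} only controls by $\|\vec{\varphi}\|_{\E} + \theta$, a priori too large for the target estimate. The key observation is that the coefficient of $(\dot{y}_n - \ell_n)$ equals $(\vec{Z}_n^0, \vec{Z}_n^\pm)_{L^2}$ (using $\vec{Z}_n^0 = (\partial_x Q_n, -\ell_n \partial_x^2 Q_n)$), which vanishes identically by the orthogonality in \eqref{idenZ}; the off-diagonal $n' \ne n$ pieces are bounded by \eqref{tech4} times $|\dot{y}_{n'} - \ell_{n'}|$, producing $O(\theta(\|\vec{\varphi}\|_{\E} + \theta)) = O(\|\vec{\varphi}\|_{\E}^2 + \theta)$. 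Collecting all the error bounds yields \eqref{ode:z}.
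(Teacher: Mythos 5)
Your proposal is correct and follows essentially the same route as the paper's proof: differentiate $a_n^{\pm}$, pass the linearized operator onto $\vec{Z}_n^{\pm}$ and use the eigenvalue relation \eqref{idenZ} together with the shift term to produce $\pm\alpha_n a_n^{\pm}-(\dot y_n-\ell_n)(\vec{\varphi},\partial_x\vec{Z}_n^{\pm})_{L^2}$, kill the diagonal modulation term via $(\vec{Z}_{\ell}^{0},\vec{Z}_{\ell}^{\pm})_{L^2}=0$, and bound all remaining terms by \eqref{tech3}, \eqref{tech4}, \eqref{est:tay6} and \eqref{est:y}. The only differences (unfolding \eqref{idenZ} into scalar identities, and grouping the $f'(U_1)-f'(Q_n)$ cross-term directly rather than through the paper's $\vec R_2$ plus the $\sum_{n'\ne n} f'(Q_{n'})\Upsilon_{n,2}^{\pm}$ term) are purely organizational.
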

\begin{proof}
First, using~\eqref{equ:vvp}, we compute,
\begin{align*}
\frac{\rm d}{{\rm d}t}a_{1}^{\pm}
&=\left(\pt \vec{\varphi},\vec{Z}_{1}^{\pm}\right)_{L^{2}}+\left( \vec{\varphi},\pt \vec{Z}_{1}^{\pm}\right)_{L^{2}}\\
&=\left(\vec{\mathcal{L}}\vec{\varphi},\vec{Z}_{1}^{\pm}\right)_{L^{2}}-\ell_{1}\left(\vec{\varphi},\px \vec{Z}_{1}^{\pm}\right)_{L^{2}}+\left(\vec{G},\vec{Z}_{1}^{\pm}\right)_{L^{2}}
\\&\quad +\left(\vec{R}_{1},\vec{Z}_{1}^{\pm}\right)_{L^{2}}
+\left(\vec{R}_{2},\vec{Z}_{1}^{\pm}\right)_{L^{2}}
+\left(\vec{\rm{Mod}},\vec{Z}_{1}^{\pm}\right)_{L^{2}}-(\dot{y}_{1}-\ell_{1})\left(\vec{\varphi},\px \vec{Z}_{1}^{\pm}\right)_{L^{2}}.
\end{align*}
Note that, from~\eqref{idenZ} and integration by parts,
\begin{align*}
\left(\vec{\mathcal{L}}\vec{\varphi},\vec{Z}_{1}^{\pm}\right)_{L^{2}}-\ell_{1}\left(\vec{\varphi},\px \vec{Z}_{1}^{\pm}\right)_{L^{2}}
&=-\left(\vec{\varphi},\left(\mathcal{H}_{\ell_{1}}J\vec{Z}_{\ell_1}^{\pm}\right)(\cdot-y_{1})\right)_{L^{2}}\\
&\quad +\sum_{n=2}^{N}\left(\vp,f'(Q_{n})\Upsilon_{1,2}^{\pm}\right)_{L^{2}}\\
&=\pm \alpha_{1}a_{1}^{\pm}+\sum_{n=2}^{N}\left(\vp,f'(Q_{n})\Upsilon_{1,2}^{\pm}\right)_{L^{2}}.
\end{align*}
Using~\eqref{tech3} and the Cauchy-Schwarz inequality,
\begin{align*}
&\bigg|\sum_{n=2}^{N}\left(\vp,f'(Q_{n})\Upsilon_{1,2}^{\pm}\right)_{L^{2}}\bigg|\lesssim \sum_{n=2}^{N}\|f'(Q_{n})\Upsilon_{1,2}^{\pm}\|_{L^{2}}\|\vp\|_{L^{2}}\lesssim
\|\vp\|_{L^{2}}^{2}+\theta^2.
\end{align*}
Next, using~\eqref{tech3}, \eqref{est:tay6} and the Sobolev embedding theorem
\begin{equation*}
\left|\left(\vec{R}_{1},\vec{Z}_{1}^{\pm}\right)_{L^{2}}\right|\lesssim
\int_{\RR}\left(|\vp|^{2}+|\vp|^{q}+|\vp|^{p}\right)\d x\lesssim \|\vec{\varphi}\|_{\E}^{2}+\|\vec{\varphi}\|_{\E}^{q}+\|\vec{\varphi}\|_{\E}^{p},
\end{equation*}
\begin{equation*}
\left|\left(\vec{R}_{2},\vec{Z}_{1}^{\pm}\right)_{L^{2}}\right|\lesssim \|f'(U_{1})-\sum_{n=1}^{N}f'(Q_{n})\|_{L^{2}}\|\vp\|_{L^{2}}\le \|\vec{\varphi}\|_{\E}^{2}+\theta^2,
\end{equation*}
and
\begin{equation*}
\left|\left(\vec{G},\vec{Z}_{1}^{\pm}\right)_{L^{2}}\right|\lesssim \|{G}\|_{L^{2}}
\lesssim \theta.
\end{equation*}
Moreover, from~\eqref{idenZ},~\eqref{tech4} and~\eqref{est:y},
\begin{align*}
\left(\vec{\rm{Mod}},\vec{Z}_{1}^{\pm}\right)_{L^{2}}
&=\sum_{n=2}^{N}(\dot{y}_{n}-\ell_{n})\left(\vec{Z}_{n}^{0},\vec{Z}_{1}^{\pm}\right)_{L^{2}}=O\left(\|\vec{\varphi}\|_{\E}^{2}+\theta^2\right).
\end{align*}
Last, from~\eqref{est:y},
\begin{equation*}
\big|(\dot{y}_{1}-\ell_{1}) (\vec{\varphi},\px \vec{Z}_{1}^{\pm})_{L^{2}}\big|\lesssim \|\vec{\varphi}\|_{\E}^{2}+\theta^2.
\end{equation*}
Gathering the above estimates and proceeding similarly for $a_{n}^{\pm}$ for $n=2,\ldots,N$, we obtain~\eqref{ode:z}.
\end{proof}

\section{Monotonicity property for the 1D Klein-Gordon equation}\label{Se:3}

\subsection{Bootstrap setting}
Fix 
\begin{equation}\label{def:g1}
\gamma_1=\frac{1}{8}\min\left(\ell_{2}-\ell_{1},\cdots,\ell_{N}-\ell_{N-1}\right)>0.
\end{equation}
We introduce the following bootstrap estimates: for $C_{0}\gg 1$ to be chosen later,
\begin{equation}\label{Bootset}\left\{
\begin{aligned}
& \|\vec{\varphi}(t)\|_{\E}\le C_{0}\left(\delta+e^{-\gamma_{0}L}\right),\\
& \min \{(y_{n+1}-y_{n})(t); \ n=1,\ldots,N\}\ge \frac{9}{10}L+2\gamma_{1}t,\\
&\sum_{n=1}^{N}|a_{n}^{+}(t)|^{2}\le C_{0}^{\frac{3}{2}}\left(\delta^{2}+e^{-2\gamma_{0}L}\right),\\
& \sum_{n=1}^{N}|a_{n}^{-}(t)|^{2}\le C_{0}^{\frac{7}{4}}\left(\delta^{2}+e^{-2\gamma_{0}L}\right).
\end{aligned}\right.
\end{equation}
Recall that $\gamma_0$ is defined in~\eqref{def:gam0}.
Let $\vec{u}_{0}$ satisfy~\eqref{estQ0} and~\eqref{Bootset} at $t=0$,
let $\vec{u}$ be the corresponding solution of~\eqref{NLKGvec} and define
\begin{equation}\label{defT*}
T_{*}(\vec{u}_{0})=\sup \{\mbox{$t\in [0,\infty): \vec{u}$ satisfies \eqref{estQ0} and~\eqref{Bootset} holds on $[0,t]$}\},
\end{equation}
Note that, from~\eqref{Bootset}, for all $t\in[0,T_{*}(\vec{u}_{0})]$,
\begin{equation}\label{est:theta}
\theta(t)=\sum_{n=1}^{N-1}e^{-4\gamma_{0}(y_{n+1}(t)-y_{n}(t))}
\lesssim e^{-\left(\frac{18}{5}L+8\gamma_{1} t\right)}\lesssim e^{-3\gamma_{0}L}.
\end{equation}

Note also that, for $C_{0}$ large enough, from~\eqref{est:y} and~\eqref{Bootset}, for all $n=1,\ldots,N$ and $t\in[0,T_{*}(\vec{u}_{0})]$,
\begin{equation*}
\left|\dot{y}_{n}-\ell_{n}\right|
\le C_{0}\bigg(\|\vec{\varphi}\|_{\E}+\theta\bigg)
\le 2C_{0}^{2}\left(\delta+e^{-\gamma_{0}L}\right)\le \gamma_{1} ,
\end{equation*}
for $L$ large enough and $\delta$ small enough (depending on $C_{0}$).
Integrating on $[0,t]$ for any $t\in[0,T_{*}(\vec{u}_{0})]$, we obtain,
for all $n=1,\ldots,N$,
\begin{equation}\label{est:yt}
\left|y_{n}(t)-\ell_{n}t-y_{n}(0)\right|\le \gamma_1 t.
\end{equation}

\subsection{Monotonicity property}
First, we fix suitable cut-off functions. Let $\chi$ be a $C^{3}$ function such that 
\begin{equation*}
\chi'\ge 0,\quad \sqrt{\chi}\in C^{1},\quad \sqrt{1-\chi}\in C^{1},\quad 
\chi(x)= 
\begin{cases}
0 & \mbox{for $x\le -1$},\\
1 & \mbox{for $x>1$}.
\end{cases}
\end{equation*}

Set
\begin{equation*}
  \beta_{n}=\frac{\ell_{n-1}+\ell_{n}}{2},\quad \bar{y}^{0}_{n}=\frac{y_{n-1}(0)+y_{n}(0)}{2},\quad 
  \chi_{n}(t,x)=\chi\left(\frac{x-\beta_{n} t-\bar{y}^{0}_{n}}{(t+a)^{\alpha}}\right),
\end{equation*}
for $n=2,\ldots,N$,
where $\alpha$ and $a$ are chosen so that 
\begin{equation*}
\frac{1}{2}<\alpha<\frac{4}{7}\quad \mbox{and}\quad a=\left(\frac{L}{10}\right)^{\frac{1}{\alpha}}.
\end{equation*}
We also set
\begin{equation*}
\beta_{1}=0,\quad \chi_{1}=1,\quad \chi_{N+1}=0.
\end{equation*}
Let
\begin{equation*}
\psi_{n}(t,x)=\chi_{n}(t,x)-\chi_{n+1}(t,x)\quad \mbox{for $n=1,\ldots,N$}.
\end{equation*}
Note that $\psi_{n}\equiv 1$ around the solitary wave $Q_n$, and $\psi_{n}\equiv 0$ around the solitary waves $Q_{n'}$ for $n'\ne n$. Moreover,
\begin{equation}\label{decompsi}
\sum_{n=1}^{N}\psi_{n}=1\quad \mbox{and}\quad \chi_{n}=\sum_{n'=n}^{N}\psi_{n'} \quad \mbox{for }\ n=1,\ldots,N.
\end{equation}
 Set 
\begin{equation*}
\Omega_{n}(t)=\left\{x\in \RR:|x-\beta_{n}t-\bar{y}_{n}^{0}|\le (t+a)^{\alpha}\right\}\quad \mbox{for}\ t\in[0,T_{*}(\vec{u}_{0})].
\end{equation*}
From the definition of $\chi_{n}$ and $\Omega_{n}$, we have the following estimates
($\textbf{1}_I$ denotes the characteristic function of the interval $I$)
\begin{equation}\label{pxchi}
|\px \chi_{n}| \lesssim \frac{1}{(t+a)^{\alpha}}\textbf{1}_{\Omega_{n}},
\end{equation}
\begin{equation}\label{est:px2chi}
 |\partial^{2}_{x}\chi_{n} |+ |\partial_{tx}\chi_{n} |\lesssim \frac{1}{(t+a)^{2\alpha}}\textbf{1}_{\Omega_{n}},\quad 
 |\px^{3}\chi_{n} |\lesssim \frac{1}{(t+a)^{3\alpha}}\textbf{1}_{\Omega_{n}}.
\end{equation}
We give technical estimates related to the cut-off functions.
\begin{lemma}\label{le:error}
For all $n=1,\ldots,N$ and all $t\in[0,T_{*}(\vec{u}_{0})]$, the following holds.
\begin{enumerate}
	\item \emph{Estimates related to $\chi_{n}$}. We have 
	\begin{equation}\label{est:n'len2}
	\sum_{n'=1}^{n-1}\int_{\RR}\left(\left(\px Q_{n'}\right)^{2}+Q_{n'}^{2}\right)\chi_{n}\d x\lesssim e^{-4\gamma_{0}L},
	\end{equation}
\begin{equation}\label{est:n'len}
\sum_{n'=1}^{n-1}\left(\|\chi_{n} Q_{n'}\|_{L^{2}}+\|\chi_{n}\px Q_{n'}\|_{L^{2}}\right)
\lesssim e^{-2\gamma_{0}L}.
\end{equation}
\item \emph{Estimates related to $(1-\chi_{n})$}. We have 
\begin{equation}\label{est:nlen'2}
\sum_{n'=n}^{N}\int_{\RR}\left(\left(\px Q_{n'}\right)^{2}+Q_{n'}^{2}\right)(1-\chi_{n})\d x\lesssim e^{-4\gamma_{0}L},
\end{equation}
\begin{equation}\label{est:nlen'}
\sum_{n'=n}^{N}\left(\|(\chi_{n}-1) Q_{n'}\|_{L^{2}}+\|(\chi_{n}-1)\px Q_{n'}\|_{L^{2}}\right)
\lesssim e^{-2\gamma_{0}L}.
\end{equation} 
\item \emph{Estimates related to $\Omega_{n}$.} We have 
\begin{equation}\label{est:1lenleN}
\sum_{n'=1}^{N} \left(\|{\bf{1}}_{\Omega_{n}} Q_{n'}\|_{L^{2}}
+ \|{\bf{1}}_{\Omega_{n}} \px Q_{n'}\|_{L^{2}}\right)\lesssim
e^{-2\left(\gamma_{0}L + \gamma_1 t\right)}.
\end{equation} 
\item \emph{Estimates related to $\psi_{n'}$.} We have 
\begin{equation}\label{est:f'nnen}
\sum_{n'\ne n}\|f'(Q_{n})\psi_{n'}\|_{L^{2}}\lesssim e^{-2\gamma_{0} L},
\end{equation}
\begin{equation}\label{est:Upsilonpsi}
\sum_{n'\ne n}\|\Upsilon_{n,1}\psi_{n'}\|_{L^{2}}+\sum_{n' \ne n}\|\Upsilon_{n,2}\psi_{n'}\|_{L^{2}}\lesssim e^{-2\gamma_{0} L},
\end{equation}
\begin{equation}\label{est:Qpsi}
\sum_{n'\ne n}\|\px Q_{n}\psi_{n'}\|_{L^{2}}+\sum_{n'\ne n}\|\partial_{x}^{2} Q_{n}\psi_{n'}\|_{L^{2}}\lesssim e^{-2\gamma_{0} L}.
\end{equation}
\end{enumerate}
\end{lemma}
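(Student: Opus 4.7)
The plan is to reduce all eight inequalities to a single geometric fact: for $L$ large enough, on the support of whichever cut-off appears in each bound, the relevant soliton center $y_{n'}(t)$ is separated from the integration variable $x$ by at least $D_t:=\tfrac{L}{4}+2\gamma_1 t$. Once this is established, every bound follows by direct integration against the exponential decay of $Q$ and its derivatives \eqref{Qdec} and of $\Upsilon_{n,k}^\pm$ \eqref{est:decayUpsilon}; in the change of variable $x\mapsto (x-y_{n'})/\sqrt{1-\ell_{n'}^2}$, the factor $\sqrt{1-\ell_{n'}^2}\le 1$ only improves the effective decay rate to at least $1$, so I may work with $e^{-|x-y_{n'}|}$ throughout.

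The key geometric computation goes as follows. Combining the bootstrap initial separation $y_n(0)-y_{n-1}(0)\ge \tfrac{9}{10}L$, the velocity gap $\ell_n-\ell_{n-1}\ge 8\gamma_1$ from \eqref{def:g1}, and the trajectory bound \eqref{est:yt}, one obtains
\begin{equation*}
\beta_n t+\bar y_n^0-y_{n-1}(t)\ge \tfrac{\ell_n-\ell_{n-1}}{2}t-\gamma_1 t+\tfrac{y_n(0)-y_{n-1}(0)}{2}\ge 3\gamma_1 t+\tfrac{9}{20}L.
\end{equation*}
On the support of $\chi_n$ one has $x\ge \beta_n t+\bar y_n^0-(t+a)^\alpha$, hence for any $n'\le n-1$,
\begin{equation*}
x-y_{n'}(t)\ge x-y_{n-1}(t)\ge 3\gamma_1 t+\tfrac{9}{20}L-(t+a)^\alpha.
\end{equation*}
Since $\alpha<1$, $a^\alpha=L/10$, the subadditivity $(t+a)^\alpha\le t^\alpha+a^\alpha$ and the elementary bound $3\gamma_1 t-t^\alpha\ge 2\gamma_1 t-C_{\gamma_1,\alpha}$ yield $x-y_{n'}(t)\ge D_t$ once $L$ is large enough to absorb $C_{\gamma_1,\alpha}$ into $L/20$. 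Symmetric computations cover: the support of $1-\chi_n$ (for $n'\ge n$), the band $\Omega_n$ (for all $n'$), and, for $n'\ne n$, the support $\mathrm{supp}(\psi_{n'})\subset [\beta_{n'}t+\bar y_{n'}^0-(t+a)^\alpha,\ \beta_{n'+1}t+\bar y_{n'+1}^0+(t+a)^\alpha]$, where the distance from $y_n$ is again $\ge D_t$.

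The individual bounds now follow by short integrations, e.g.
\begin{equation*}
\int Q_{n'}^2\chi_n\,dx\lesssim \int_{|x-y_{n'}|\ge D_t}e^{-2|x-y_{n'}|}\,dx\lesssim e^{-2D_t}\le e^{-L/2-4\gamma_1 t}\le e^{-4\gamma_0 L},
\end{equation*}
which proves \eqref{est:n'len2} using $4\gamma_0\le\tfrac12$; the six estimates \eqref{est:n'len}, \eqref{est:nlen'2}, \eqref{est:nlen'}, \eqref{est:1lenleN}, \eqref{est:Upsilonpsi}, \eqref{est:Qpsi} are strictly parallel. The one delicate case is \eqref{est:f'nnen}: from $|f'(Q_n)|\lesssim |Q_n|^{q-1}+|Q_n|^{p-1}$ one gets $\|f'(Q_n)\psi_{n'}\|_{L^2}\lesssim e^{-(q-1)D_t}\lesssim e^{-(q-1)L/4}$, and this is $\lesssim e^{-2\gamma_0 L}$ \emph{precisely} because the definition \eqref{def:gam0} forces $\gamma_0\le (q-1)/8$ (a short case-split between $1<q\le 3$ and $q>3$). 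This calibration is the real ``obstacle'': $\gamma_0$ must be chosen small enough to accommodate the weakest nonlinear decay rate, coming from $|Q|^{q-1}$, while $\alpha<1$ must be imposed so that the sublinear transition width $(t+a)^\alpha$ cannot consume the linearly growing gap $2\gamma_1 t$. Both constraints are already hard-wired into the definitions of $\gamma_0$ and $\alpha$ made earlier in the paper.
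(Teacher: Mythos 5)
Your proposal is correct and follows essentially the same route as the paper: use the bootstrap separation, the velocity gap $\gamma_1$, the trajectory bound \eqref{est:yt} and the sublinear width $(t+a)^\alpha$ (with $a^\alpha=L/10$) to show the relevant soliton center is at distance at least $\tfrac{L}{4}+2\gamma_1 t$ from the support of each cut-off, then integrate the exponential decay \eqref{Qdec}, \eqref{est:decayUpsilon}, with the calibration $\gamma_0\le\min(q^*,1)/8$ handling the $|Q_n|^{q-1}$ term in \eqref{est:f'nnen}. This matches the paper's proof in both structure and the key numerical checks.
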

\begin{proof}

Proof of~(i). We prove~\eqref{est:n'len2};~\eqref{est:n'len} is proved similarly. By the definition of $\chi_{n}$, for any $x\in \mathrm{supp}\, \chi_{n}$,
\[
x-\beta_n t -\bar y_n^0 \geq -(t+a)^{\alpha},
\]
and so using~\eqref{est:yt}, for any $1\le n'\le n-1$,
\begin{equation*}
x-y_{n'}(t) \ge (\beta_{n}-\ell_{n'})t- \gamma_1 t+(\bar{y}^{0}_{n}-y_{n'}(0))-(t+a)^{\alpha}.\end{equation*}
By the definition of $\gamma_{1}$ and~\eqref{Bootset} at $t=0$, we have 
$\beta_{n}-\ell_{n'}\geq \frac 12 (\ell_{n}-\ell_{n-1})\ge 4\gamma_1$,
\[\bar{y}^{0}_{n}-y_{n'}(0)=\frac{1}{2}\left(y_{n}(0)+y_{n-1}(0)\right)-y_{n'}(0) \geq \frac 12 (y_n(0)-y_{n-1}(0))\geq \frac{9L}{20},\]
and by the definition of $a^\alpha = \frac L{10}$, $0<\alpha<1$,
\[
(t+a)^{\alpha} 
\leq t^{\alpha}+a^{\alpha}\leq t^\alpha + \frac L{10}
\leq \gamma_1t + \frac {L}{5},
\]
for $L$ large enough.
Therefore, from $\gamma_{0}\le \frac{1}{8}$, for any $x\in \mathrm{supp}\ \chi_{n}$,
\begin{equation*}
 x-y_{n'}(t)\ge 4\gamma_1 t-\gamma_1 t+\frac{9L}{20}-\gamma_1 t-\frac{L}{5}\ge 2\gamma_1t+\frac{L}{4}\ge 2(\gamma_{0} L+\gamma_1 t),
\end{equation*}
for $L$ large enough.
Based on the decay properties of $Q$ in~\eqref{Qdec} and above estimate,
\begin{equation*}
\begin{aligned}
\sum_{n'=1}^{n-1}\int_{\RR}\left(\left(\px Q_{n'}\right)^{2}+Q_{n'}^{2}\right)\chi_{n}\d x
&\lesssim \sum_{n'=1}^{n-1}\int_{ x-y_{n'}(t)\ge 2(\gamma_{0} L+\gamma_1 t)}e^{-\frac{2|x-y_{n'}(t)|}{\sqrt{1-\ell^{2}_{n'}}}}\d x\\
&\lesssim \int_{x\ge 2(\gamma_{0} L+\gamma_1 t)}e^{-2x}\d x\lesssim e^{-4(\gamma_{0} L+\gamma_{1} t)},
\end{aligned}
\end{equation*}
which implies~\eqref{est:n'len2}.

The proof of (ii) is the same.

Proof of~(iii). 
Let any $1\le n\le N$ and $1\le n'\le N$.
For any $x\in \Omega_{n}$, by the triangle inequality, $\gamma_1=\frac{1}{8}\min_{n'\ne n}|\ell_{n}-\ell_{n'}|$,~\eqref{Bootset} at $t=0$ and~\eqref{est:yt},
\begin{align*}
|x-y_{n'}(t) |
&\ge |(\ell_{n'} -\beta_n) t + {y}_{n'}(0)-\bar{y}_{n}^{0}|
-|y_{n'}(t)-\ell_{n'}t-y_{n'}(0)| - |x-\beta_nt -\bar y_n^0|\\
&\ge
|\ell_{n'} -\beta_{n}| t
+\left|{y}_{n'}(0)-\bar{y}_{n}^{0}\right|-\gamma_1t-(t+a)^{\alpha}
\ge 3\gamma_1 t+\frac{9L}{20}-(t+a)^{\alpha}.
\end{align*}
Thus, using similar arguments as in the proof of~\eqref{est:n'len2}, we obtain~\eqref{est:1lenleN}.

Proof of~(iv).
 We prove~\eqref{est:f'nnen}; from~\eqref{Qdec} and~\eqref{est:decayUpsilon}, we see that~\eqref{est:Upsilonpsi} and~\eqref{est:Qpsi} are proved similarly. Let $1\le n\le N$ and $1\le n'\le n-1$. From the definition of $\psi_{n'}$, we have, for any $x\in {\rm{supp}}\ \psi_{n'}$,
\begin{equation*}
x-\beta_{n'+1}t-\bar{y}_{n'+1}^{0}\le (t+a)^{\alpha}.
\end{equation*}
Thus, by $\gamma_1= \frac{1}{8}\min_{n'\ne n}|\ell_{n}-\ell_{n'}|$,~\eqref{Bootset} at $t=0$,~\eqref{est:yt} and $a^{\alpha}=\frac{L}{10}$,
for any $x\in {\rm{supp}}\ \psi_{n'}$
\begin{equation*}
\begin{aligned}
x-y_{n}(t)&\le \left(\beta_{n'+1}-\ell_{n}\right)t+(\bar{y}_{n'+1}(0)-y_{n}(0))+\gamma_1 t+(t+a)^{\alpha}\\
&\le -\frac{1}{2}(\ell_{n}-\ell_{n-1})t-\frac{1}{2}(y_{n}(0)-y_{n-1}(0))+\gamma_1 t+t^{\alpha}+a^{\alpha}\\
&\le -3\gamma_1 t-\frac{9L}{20}+t^{\alpha}+\frac{L}{10}\le -3\gamma_1 t-\frac{9L}{20}+\gamma_{1} t+\frac{L}{5}\le -\frac{L}{4},
\end{aligned}
\end{equation*}
for $L$ large enough.
Let $1\le n\le N$ and $n'\ge n+1$. From the definition of $\psi_{n'}$, we have, for any $x\in {\rm{supp}}\ \psi_{n'}$,
\begin{equation*}
x-\beta_{n'}t-\bar{y}_{n'}^{0}\ge -(t+a)^{\alpha}.
\end{equation*}
Using again $\gamma_1\le \frac{1}{8}\min_{n'\ne n}|\ell_{n}-\ell_{n'}|$,~\eqref{Bootset} at $t=0$,~\eqref{est:yt} and $a^{\alpha}=\frac{L}{10}$, for any $x\in {\rm{supp}}\ \psi_{n'}$,
\begin{equation*}
\begin{aligned}
x-y_{n}(t)
&\ge (\beta_{n'}-\ell_{n})t+(\bar{y}_{n'}^{0}-y_{n}(0))-\gamma_1t-(t+a)^{\alpha}\\
&\ge \frac{1}{2}(\ell_{n+1}-\ell_{n})t+\frac{1}{2}(y_{n+1}(0)-y_{n}(0))-\gamma_1t-t^{\alpha}-a^{\alpha}\\
&\ge 3\gamma_1 t+\frac{9L}{20}-t^{\alpha}-\frac{L}{10}\ge 3\gamma_{1}t+\frac{9L}{20}-\gamma_{1}t-\frac{L}{5}\ge \frac{L}{4}.
\end{aligned}
\end{equation*}
Thus, using similar arguments as in the proof of~\eqref{est:n'len2} and $\frac{p-1}{4}\le 2\gamma_{0}$, we obtain~\eqref{est:f'nnen}.
\end{proof}
Second, let 
\begin{equation}\label{def:ck}
c_{1}=\ell_{1},\quad c_{2}=\frac{\ell_{2}-\ell_{1}}{1-\beta_{2}\ell_{2}},\quad c_{n}=\left(\frac{\ell_{n}-\ell_{n-1}}{1-\beta_{n}\ell_{n}}\right)\prod_{n'=2}^{n-1}\left(\frac{1-\beta_{n'}\ell_{n'-1}}{1-\beta_{n'}\ell_{n'}}\right),
\end{equation}
for $n=3,\ldots,N$. Denote
\begin{equation}\label{def:tc}
\tilde{c}_{1}=1\quad \mbox{and}\quad 
\tilde{c}_{n}=1+\sum_{n'=2}^{n}c_{n'}\beta_{n'}\quad \mbox{for}\ n=2,\ldots,N.
\end{equation}
Such specific choices are explained in Remark~\ref{re:cn}.
By direct computation, one proves the following lemma.
\begin{lemma}
For $n=2,\ldots,N$,
\begin{equation}\label{eq:sumck2}
\sum_{n'=1}^{n}c_{n'}=\tilde{c}_{n}\ell_{n}
\quad \mbox{and}\quad 
\tilde{c}_{n}=\prod_{n'=2}^{n}
\left(\frac{1-\beta_{n'}\ell_{n'-1}}{1-\beta_{n'}\ell_{n'}}\right).
\end{equation}
\end{lemma}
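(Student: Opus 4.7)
The plan is to establish both identities simultaneously by induction on $n$, since they are closely linked through the recurrence $\tilde{c}_n=\tilde{c}_{n-1}+c_n\beta_n$ inherent in the definition~\eqref{def:tc}. For the base case $n=2$, a direct algebraic manipulation gives
\[
\tilde{c}_2=1+c_2\beta_2=1+\beta_2\frac{\ell_2-\ell_1}{1-\beta_2\ell_2}=\frac{1-\beta_2\ell_1}{1-\beta_2\ell_2},
\]
and $c_1+c_2=\ell_1+(\ell_2-\ell_1)/(1-\beta_2\ell_2)=\tilde{c}_2\ell_2$ after clearing denominators, which verifies both identities for $n=2$.

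For the inductive step, assuming both identities hold for $n-1$, I would first observe that the definition~\eqref{def:ck} of $c_n$ can be rewritten, using the induction hypothesis on the product formula for $\tilde{c}_{n-1}$, as
\[
c_n=\tilde{c}_{n-1}\,\frac{\ell_n-\ell_{n-1}}{1-\beta_n\ell_n}.
\]
Inserting this into the recurrence $\tilde{c}_n=\tilde{c}_{n-1}+c_n\beta_n$ then yields
\[
\tilde{c}_n=\tilde{c}_{n-1}\left(1+\beta_n\frac{\ell_n-\ell_{n-1}}{1-\beta_n\ell_n}\right)=\tilde{c}_{n-1}\,\frac{1-\beta_n\ell_{n-1}}{1-\beta_n\ell_n},
\]
which closes the induction on the product formula. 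The first identity then follows by computing
\[
\sum_{n'=1}^{n}c_{n'}=\tilde{c}_{n-1}\ell_{n-1}+c_n=\tilde{c}_{n-1}\,\frac{\ell_{n-1}(1-\beta_n\ell_n)+(\ell_n-\ell_{n-1})}{1-\beta_n\ell_n}=\tilde{c}_{n-1}\,\frac{(1-\beta_n\ell_{n-1})\ell_n}{1-\beta_n\ell_n}=\tilde{c}_n\ell_n,
\]
where the cross term $\beta_n\ell_{n-1}\ell_n$ conveniently cancels in the numerator. There is no serious obstacle here: the lemma is entirely a bookkeeping statement about the specific rational expressions defining $c_n$ and $\tilde c_n$, and the only mild subtlety is noticing that the product in the definition of $c_n$ is precisely $\tilde c_{n-1}$, which unlocks the clean recursive form used in both computations.
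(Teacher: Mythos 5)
Your proof is correct and follows essentially the same route as the paper: induction on $n$, with the same $n=2$ base case and the same inductive step exploiting that the product in the definition of $c_n$ equals $\tilde c_{n-1}$ together with the recurrence $\tilde c_n=\tilde c_{n-1}+c_n\beta_n$. Nothing further is needed.
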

\begin{proof}
We prove~\eqref{eq:sumck2} by induction.

{\bf{Step 1}.} For $n=2$. By direct computation,
\begin{equation*}
\tilde{c}_{2}=1+c_{2}\beta_{2}=1+\frac{\beta_{2}\ell_{2}-\beta_{2}\ell_1}{1-\beta_{2}\ell_2}=\frac{1-\beta_{2}\ell_1}{1-\beta_{2}\ell_2},
\end{equation*}
\begin{equation*}
c_{1}+c_{2}=\ell_{1}+\frac{\ell_2-\ell_1}{1-\beta_{2}\ell_{2}}=\frac{\ell_{2}-\beta_{2}\ell_{1}\ell_{2}}{1-\beta_2\ell_2}=\tilde{c}_{2}\ell_{2},
\end{equation*}
which implies~\eqref{eq:sumck2} for $n=2$.

{\bf{Step 2}.} Assuming that~\eqref{eq:sumck2} is true for $n=k$ with $2\leq k\leq N-1$,
we prove that it is also true for $n=k+1$. From the definition of $c_{n}$ for $n\ge 3$, we obtain
\begin{align*}
\tilde{c}_{k+1}&=\prod_{n'=2}^{k}\left(\frac{1-\beta_{n'}\ell_{n'-1}}{1-\beta_{n'}\ell_{n'}}\right)
+\left(\frac{\beta_{k+1}(\ell_{k+1}-\ell_{k})}{1-\beta_{k+1}\ell_{k+1}}\right)\prod_{n'=2}^{k}\left(\frac{1-\beta_{n'}\ell_{n'-1}}{1-\beta_{n'}\ell_{n'}}\right)\\
&=\left(1+\frac{\beta_{k+1}(\ell_{k+1}-\ell_{k})}{1-\beta_{k+1}\ell_{k+1}}\right)\prod_{n'=2}^{k}\left(\frac{1-\beta_{n'}\ell_{n'-1}}{1-\beta_{n'}\ell_{n'}}\right)=\prod_{n'=2}^{k+1}\left(\frac{1-\beta_{n'}\ell_{n'-1}}{1-\beta_{n'}\ell_{n'}}\right),
\end{align*}
and
\begin{align*}
\sum_{n'=1}^{k+1}c_{n'}
&=\tilde{c}_{k}\ell_{k}+c_{k+1}\\
&=\ell_{k}\prod_{n'=2}^{k}\left(\frac{1-\beta_{n'}\ell_{n'-1}}{1-\beta_{n'}\ell_{n'}}\right)
+\left(\frac{\ell_{k+1}-\ell_{k}}{1-\beta_{k+1}\ell_{k+1}}\right)\prod_{n'=2}^{k}\left(\frac{1-\beta_{n'}\ell_{n'-1}}{1-\beta_{n'}\ell_{n'}}\right)\\
&=\ell_{k+1}\prod_{n'=2}^{k+1}\left(\frac{1-\beta_{n'}\ell_{n'-1}}{1-\beta_{n'}\ell_{n'}}\right)
=\tilde{c}_{k+1}\ell_{k+1},
\end{align*}
which means that~\eqref{eq:sumck2} is true for $n=k+1$. By the induction argument, \eqref{eq:sumck2} is proved for any $n=2,\ldots,N$.
\end{proof}
Third, we introduce the following quantities, for $n=2,\ldots,N$,
\begin{equation}\label{def:Jn}
\mathcal{J}_{n}(\vec{u})=\mathcal{I}_{{n}}(\vec{u})+\beta_{n}E_{n}(\vec{u})+\beta_{n}F_{n}(\vec{u})
\end{equation}
where
\begin{align}
\mathcal{I}_{n}(\vec{u})&=2\int_{\RR}\left(\chi_{n}\px u_{1}+\frac{1-\beta^{2}_n}{2}\big(\px \chi_{n}\big) u_{1}\right)u_{2}\d x,\label{def:In}\\
E_{n}(\vec{u})&=\int_{\RR}\big((\partial_{x}u_1)^{2}+u_1^{2}+u_2^{2}-2F(u_1)\big)\chi_{n}\d x,
\label{def:En}\\ 
F_{n}(\vec{u})&=-\frac{\alpha}{t+a}\int_{\RR}\left( x-\beta_{n}t-\bar{y}^{0}_{n}\right) u_{1}u_{2} (\px \chi_{n})\d x.\label{def:Fn}
\end{align}

Last, we set
\begin{equation}\label{def:comE}
\mathcal{E}(\vec{u})=E(\vec{u})+c_{1}\mathcal{I}(\vec{u})+\sum_{n=2}^{N}c_{n}\mathcal{J}_{n}(\vec{u})
\end{equation}
where, as defined in the Introduction,
\begin{equation*}
E(\vec{u})=\int_{\RR}\big((\partial_{x}u_1)^{2}+u_1^{2}+u_2^{2}-2F(u_1)\big)\d x,\quad 
\mathcal{I}(\vec{u})=2\int_{\RR}(\px u_{1})u_2\d x.
\end{equation*}
\begin{remark}\label{re:cn}
(i) The quantity $\mathcal{I}_{n}(\vec{u})$ is a standard localized version of the momentum for a wave-type equation, while $E_{n}(\vec{u})$ is a standard localized version of the energy.

(ii) The expression $E(\vec{u})+c_{1}\mathcal{I}(\vec{u})+\sum_{n=2}^{N}c_{n}\left(\mathcal{I}_{n}(\vec{u})+E_{n}(\vec{u})\right)$ should be the correct functional for stability of the sum of $N$ solitons.
Indeed, due to the specific choices of $c_{n}$ in~\eqref{def:ck},
locally around each solitary wave $Q_{n}$ this functional is analogous to the quantity $\tilde{c}_{n}\left(E(\vec{u})+2\ell_{n}\mathcal{I}(\vec{u})\right)$ (see Lemma~\ref{le:expanE}).
The functional $E(\vec{u})+2\ell\,\mathcal{I}(\vec{u})$ is a usual quantity to prove stability of a soliton $Q_{\ell}$ with speed $\ell\in(-1,1)$.

(iii) For technical reasons, we need to add the refined term $F_{n}(\vec{u})$ to
the definition of $\mathcal{J}_n$.
\end{remark}
By expanding $\vec{u}(t)=\sum_{n=1}^{N}\vec{Q}_{n}(t)+\vec{\varphi}(t)$, we obtain the following formula.
\begin{lemma}\label{le:expanE}
For all $t\in [0,T_{*}(\vec{u}_{0})]$, we have
\begin{equation}\label{expan:E}
\begin{aligned}
\mathcal{E}(\vec{u})=&\sum_{n=1}^{N}\tilde{c}_{n}(1-\ell^{2}_{n})^{\frac{1}{2}}E(\vec{Q})+\sum_{n=1}^{N}
\tilde{c}_{n}H_{n}(\vec{\varphi},\vec{\varphi})\\
&+O\left(\frac{\|\vec{\varphi}\|^{2}_{\E}}{L}+\|\vec{\varphi}\|_{\E}^{3}+\|\vec{\varphi}\|_{\E}^{q+1}+e^{-3\gamma_{0}L}\right),
\end{aligned}
\end{equation}
where
\begin{equation*}
H_{n}(\vec{\varphi},\vec{\varphi})
=\int_{\RR}\big((\px \varphi_{1})^{2}+\varphi_{1}^{2}+\varphi_{2}^{2}
+2\ell_{n}(\px\vp)\vpp-f'(Q_{n})\varphi_{1}^{2}\big)\psi_{n} \d x,
\end{equation*}
for $n=1,\ldots,N$.
\end{lemma}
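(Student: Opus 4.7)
The plan is to substitute $\vec{u}=\vec{U}+\vec{\varphi}$ into $\mathcal{E}(\vec{u})$ and expand this Taylor-like in $\vec{\varphi}$, separating three contributions: the zeroth-order piece $\mathcal{E}(\vec{U})$, the linear piece $D\mathcal{E}(\vec{U})\cdot\vec{\varphi}$, and the quadratic-and-higher piece. Throughout, the key bookkeeping tools will be the partition of unity $\sum_{n}\psi_{n}=1$ from \eqref{decompsi}, the soliton interaction bounds of Lemma~\ref{tech:le}, the cut-off localization estimates of Lemma~\ref{le:error}, and the bound $\theta\lesssim e^{-3\gamma_{0}L}$ from \eqref{est:theta}.

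For the zeroth-order piece, I would first use Lemma~\ref{le:error} to split each localized quantity $E_{n}(\vec{U})$, $\mathcal{I}_{n}(\vec{U})$ into the part supported near $\vec{Q}_{n}$ plus errors of order $e^{-4\gamma_{0}L}$, and bound the resulting soliton-soliton cross terms by $\theta$ via \eqref{tech1}--\eqref{tech2}. On the support of $\psi_{n}$, only $\vec{Q}_{n}$ contributes, and the local quadratic form coefficients collapse via \eqref{eq:sumck2}: the coefficient in front of $\int((\px U_{1})^{2}+U_{1}^{2}+U_{2}^{2}-2F(U_{1}))\psi_{n}\d x$ is $1+\sum_{n'=2}^{n}c_{n'}\beta_{n'}=\tilde{c}_{n}$, and the coefficient in front of $2\int(\px U_{1})U_{2}\psi_{n}\d x$ is $\sum_{n'=1}^{n}c_{n'}=\tilde{c}_{n}\ell_{n}$. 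Using the explicit form $U_{2}\approx -\ell_{n}\px U_{1}$ on $\mathrm{supp}\,\psi_{n}$, the Lorentz change of variable $y=x/\sqrt{1-\ell_{n}^{2}}$, and the Pohozaev-type identity $\int Q^{2}-2\int F(Q)=\int(Q')^{2}$, this telescopes to $\tilde{c}_{n}\sqrt{1-\ell_{n}^{2}}\,E(\vec{Q})$. The term $\beta_{n}F_{n}(\vec{U})$ is negligible: $\px\chi_{n}$ is supported on $\Omega_{n}$, far from every $y_{n'}$, where $U_{1}U_{2}$ decays exponentially by \eqref{est:1lenleN}, and the prefactor $\alpha/(t+a)$ provides additional smallness.

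For the linear-in-$\vec{\varphi}$ piece, integration by parts reduces it to an $L^{2}$ pairing of $\vec{\varphi}$ with quantities built from the $\vec{Q}_{n}$'s. Using the boosted-soliton equation $(1-\ell_{n}^{2})\px^{2}Q_{n}-Q_{n}+f(Q_{n})=0$, a direct computation shows that for each $n$ the contributions of the full functional combine into something proportional to $\vec{Z}^{0}_{n}\cdot\vec{\varphi}$ (which vanishes by the orthogonality \eqref{phiorth}), plus (i) commutator terms $(\px\chi_{n'})U_{k}\varphi_{k'}$, which contribute $O(L^{-1}\|\vec{\varphi}\|_{\E})$ thanks to \eqref{pxchi} and \eqref{est:1lenleN}, (ii) interaction remainders from $G=f(U_{1})-\sum_{n}f(Q_{n})$, bounded by $\theta\|\vec{\varphi}\|_{\E}\lesssim e^{-3\gamma_{0}L}$ via \eqref{tech3}, and (iii) a spurious $\alpha(t+a)^{-\alpha}$-type boundary contribution from the $\mathcal{I}_{n}$ derivative, which is precisely cancelled by the construction of $F_{n}$, see Remark~\ref{re:cn}(iii). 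This is where the specific choice of $c_{n}$ in \eqref{def:ck} and the refined term $F_{n}$ in \eqref{def:Fn} pay off.

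For the quadratic-and-higher piece, I would collect the coefficients of $(\px\varphi_{1})^{2}$, $\varphi_{1}^{2}$, $\varphi_{2}^{2}$, $2(\px\varphi_{1})\varphi_{2}$ and of $-f'(U_{1})\varphi_{1}^{2}$ in the same way as in (A), which yields $\sum_{n}\tilde{c}_{n}H_{n}(\vec{\varphi},\vec{\varphi})$ after replacing $f'(U_{1})$ by $\sum_{n}f'(Q_{n})\psi_{n}$. The replacement costs $O(\theta+\|\vec{\varphi}\|_{\E}^{2})$ by \eqref{tech3} and by the localization estimates \eqref{est:f'nnen} and Sobolev embeddings. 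The commutator terms built from $\px\chi_{n}$ give $O(L^{-1}\|\vec{\varphi}\|_{\E}^{2})$ by \eqref{pxchi}, and the cubic-and-higher nonlinear terms from Taylor-expanding $F(U_{1}+\varphi_{1})$ around $U_{1}$ are controlled by $\|\vec{\varphi}\|_{\E}^{3}+\|\vec{\varphi}\|_{\E}^{q+1}+\|\vec{\varphi}\|_{\E}^{p+1}$ via \eqref{est:tay7} and Sobolev embedding (the $p+1$ term is absorbed into $\|\vec{\varphi}\|_{\E}^{3}$ for $p\leq 2$ and into $\|\vec{\varphi}\|_{\E}^{q+1}$ via the bootstrap smallness of $\|\vec{\varphi}\|_{\E}$ otherwise).

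The main obstacle will be the algebraic cancellation in step (B): one has to show that, after a modest amount of integration by parts, the linear derivative of $\mathcal{E}(\vec{U})$ splits exactly as the orthogonality-killed part plus the three harmless error types listed above. This is what forces the particular weights $c_{n}$ in \eqref{def:ck} and the additional correction $F_{n}$, and requires tracking terms involving $\px\chi_{n}$ (order $(t+a)^{-\alpha}$) and $\px^{2}\chi_{n}$ (order $(t+a)^{-2\alpha}$) without losing the $e^{-3\gamma_{0}L}$ error budget given by \eqref{est:theta}.
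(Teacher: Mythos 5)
Your overall skeleton (expand $\mathcal{E}$ around $\vec{U}$ into zeroth, linear and quadratic pieces; control soliton interactions by Lemma~\ref{tech:le} and the cut-off errors by Lemma~\ref{le:error} and \eqref{est:theta}; identify the quadratic coefficients through \eqref{decompsi} and \eqref{eq:sumck2}; treat the Taylor remainder via \eqref{est:tay7}) is the same as the paper's, and your treatment of the zeroth-order and quadratic pieces would go through. But the central step (B) is based on a wrong mechanism. The linear-in-$\vec{\varphi}$ contribution produced near the $n$-th soliton is, after integration by parts, of the form
\begin{equation*}
2\Big(\tilde{c}_{n}\ell_{n}-\sum_{n'=1}^{n}c_{n'}\Big)\int_{\RR}\big(\px Q_{n}\big)\big(\ell_{n}\px \vp-\vpp\big)\d x \;+\;\mbox{(cut-off and interaction errors)},
\end{equation*}
i.e.\ it pairs $\vec{\varphi}$ with the direction $(-\ell_{n}\px^{2}Q_{n},-\px Q_{n})$, which is $J\vec{Z}^{0}_{n}$ and \emph{not} $\vec{Z}^{0}_{n}$; the orthogonality \eqref{phiorth} does not annihilate it, and in fact the paper's proof of this lemma never invokes \eqref{phiorth}. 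The linear term disappears for a purely algebraic reason: by \eqref{eq:sumck2} one has $\sum_{n'\le n}c_{n'}=\tilde{c}_{n}\ell_{n}$, so locally the functional is $\tilde{c}_{n}\big(E+\ell_{n}\mathcal{I}\big)$ and $\vec{Q}_{\ell_{n}}$ is a critical point of $E+\ell_{n}\mathcal{I}$ (this is what the choice \eqref{def:ck} is engineered for, cf.\ Remark~\ref{re:cn}). If you carried out your plan literally --- ``reduce the first variation to something proportional to $\vec{Z}^{0}_{n}\cdot\vec{\varphi}$ and kill it by orthogonality'' --- the step would fail; you must instead exhibit the exact cancellation between the energy-type and momentum-type linear terms, which is how the paper proceeds (its Steps 1--4 keep those linear terms explicitly and cancel them in Step 6).

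The second misattribution concerns $F_{n}$: in this static expansion there is no cancellation between the $\px\chi_{n}$-part of $\mathcal{I}_{n}$ and $F_{n}$. On $\Omega_{n}$ the weight in $F_{n}$ is $O\big((t+a)^{\alpha-1}\big)$, not comparable to the constant $\tfrac{1-\beta_{n}^{2}}{2}$ appearing in $\mathcal{I}_{n}$, so no such identity exists; both terms are simply estimated separately as $O\big(\|\vec{\varphi}\|_{\E}^{2}/L+\|\vec{\varphi}\|_{\E}^{3}+e^{-3\gamma_{0}L}\big)$ using \eqref{pxchi} and \eqref{est:1lenleN} (this is the origin of the $\|\vec{\varphi}\|_{\E}^{2}/L$ loss in \eqref{expan:E}). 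The ``precise cancellation'' that motivates introducing $F_{n}$ takes place in the time-derivative computation of Lemma~\ref{le:virial}, where $\tfrac{\d}{\d t}F_{n}$ absorbs the terms coming from $\pt\chi_{n}$, not in Lemma~\ref{le:expanE}. Correcting these two points (algebraic cancellation of the linear term via \eqref{eq:sumck2}, and crude estimation of $F_{n}$ and of the $\px\chi_{n}$ terms) turns your outline into the paper's proof; minor side remark: the constant term follows from the Lorentz change of variables alone, the Pohozaev-type identity you quote is true but not needed.
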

\begin{remark}
Error terms are estimated by $e^{-3\gamma_0 L}$ in this section. Observe that some loss is needed with respect to Section 2: Lemmas~\ref{tech:le},~\ref{le:esty} and~\ref{le:estda}.
\end{remark}
\begin{proof}
{\bf {Step 1.}} Expansion of $E(\vec{u})$. We prove the following estimate
\begin{equation}\label{est:E}
\begin{aligned}
E(\vec{u})&=\sum_{n=1}^{N}(1-\ell^{2}_{n})^{\frac{1}{2}}E(\vec{Q})\\
&\quad +2\sum_{n=1}^{N}\int_{\RR}\left(\ell_{n}\px Q_{n}\right)\left(\ell_{n}\px Q_{n}+\ell_{n}\px \vp-\vpp\right)\d x\\
&\quad +\int_{\R}\left((\px \varphi_{1})^{2}+\varphi_{1}^{2}+\varphi_{2}^{2}-\sum_{n=1}^{N}f'(Q_{n})\varphi_{1}
^{2}\right)\d x\\
&\quad+O\left(\|\vec{\varphi}\|^{3}_{\E}+\|\vec{\varphi}\|_{\E}^{q+1}+e^{-3\gamma_{0}L}\right).
\end{aligned}
\end{equation} 
Recall that $\vec U = \sum_{n=1}^N \vec Q_{n}$.
First, using the decomposition~\eqref{def:psi}, the definition of $G$, the equation
$-(1-\ell_{n}^{2})\partial_{x}^{2}Q_{{n}}+Q_{{n}}-f(Q_{n})=0$ and integration by parts, we find
\begin{align*}
E(\vec{u})&=E(\vec{U})
+2\sum_{n=1}^{N}\int_{\R}(\ell_{n}\partial_{x}Q_{n})\big(\ell_{n}\px \vp-\varphi_{2}\big)\d x\\
&\quad +\int_{\RR}\big((\partial_{x}\varphi_{1})^{2}+\varphi_{1}^{2}+\varphi_{2}^{2}-\sum_{n=1}^{N}f'(Q_{n})\varphi^{2}_{1}\big)\d x\\
&\quad +\tilde{E}_{1}+\tilde{E}_{2}+\tilde{E}_{3},
\end{align*}
where
\begin{align*}
\tilde{E}_{1}&=-2\int_{\R}\big(F(U_{1}+\varphi_{1})-F(U_{1})-f(U_{1})\varphi_{1}-\frac{1}{2}f'(U_{1})\varphi_{1}^{2}\big)\d x , \\
\tilde{E}_{2}&=-2\int_{\R}G\varphi_{1}\d x,\\
 \tilde{E}_{3}&=-\int_{\R}\big(f'(U_{1})-\sum_{n=1}^{N}f'(Q_{n})\big)\varphi^{2}_{1} \d x.
\end{align*}
By \eqref{est:tay7}, $1<q<p<\infty$
and the Sobolev embedding Theorem,
\begin{equation*}
\left|\tilde{E}_{1}\right|\lesssim \int_{\R}\left(|\vp|^{p+1}+|\varphi_{1}|^{q+1}+|\varphi_{1}|^{3}\right)\d x\lesssim \|\varphi_{1}\|_{H^{1}}^{3}+\|\varphi_{1}\|_{H^{1}}^{q+1}.
\end{equation*}
By~\eqref{tech3},~\eqref{est:theta}, the Cauchy-Schwarz inequality and the Young's inequality, we have
\begin{equation*}
\left|\tilde{E}_{2}\right|\lesssim \|G\|_{L^{2}}\|\vp\|_{L^{2}}\lesssim \|\vp\|_{L^{2}}^{3}+\theta^{\frac{3}{2}}\lesssim 
\|\vp\|_{L^{2}}^{3}+e^{-\frac{9}{2}\gamma_{0} L},
\end{equation*}
and
\begin{align*}
\left|\tilde{E}_{3}\right|
&\lesssim
\big\|f'(U_{1})-\sum_{n=1}^{N}f'(Q_{n})\big\|_{L^{2}}\|\varphi_{1}\|^{2}_{H^{1}}\lesssim\|\varphi_{1}\|^{3}_{H^{1}}+\theta^{3}\lesssim\|\vec{\varphi}\|_{\E}^{3}+e^{-9\gamma_{0}L}.
\end{align*}

Second, by direct computation,
\begin{align*}
E(\vec{U})
&=\sum_{n=1}^{N}(1-\ell_{n}^{2})^{\frac{1}{2}}E(\vec{Q})+2\sum_{n=1}^{N}\int_{\R}\big(\ell_{n}\partial_{x}Q_{n}\big)^{2}\d x\\
&\quad +\sum_{n\ne n'}\int_{\R}\big[(1+\ell_{n}\ell_{n'})(\partial_{x}Q_{n})(\partial_{x}Q_{n'})
+Q_{n}Q_{n'}\big]\d x\\
&\quad -2\int_{\R}\big(F(U_{1})-\sum_{n=1}^{N}F(Q_{n})\big)\d x.
\end{align*}
Moreover, using~\eqref{tech1},~\eqref{tech2} and~\eqref{est:theta},
\begin{equation*}
\sum_{n\ne n'}\int_{\RR}\big(|\px Q_{n}\px Q_{n'}|+|Q_{n}Q_{n'}|\big)\d x
+\int_{\R}\big|F(U_{1})-\sum_{n=1}^{N}F(Q_{n})\big|\d x\lesssim \theta\lesssim e^{-3\gamma_{0}L}.
\end{equation*}
We see that~\eqref{est:E} follows from above estimates.

{\bf {Step 2}.} Expansion of $\mathcal{I}(\vec{u})$. We claim 
\begin{equation}\label{est:I}
\begin{aligned}
\mathcal{I}(\vec{u})
=&-2\sum_{n=1}^{N}\int_{\RR}\left(\px Q_{n}\right)\left(\ell_{n}\px Q_{n}+\ell_{n}\px \varphi_{1}-\varphi_{2}\right)\d x\\
&+2\int_{\RR}\left(\px \varphi_{1}\right)\varphi_{2}\d x+O\left(e^{-3\gamma_{0}L}\right).
\end{aligned}
\end{equation}
By direct computation and~\eqref{def:psi},
\begin{align*}
\mathcal{I}(\vec{u})
=&-2\sum_{n=1}^{N}\int_{\RR}\left(\px Q_{n}\right)\left(\ell_{n}\px Q_{n}+\ell_{n}\px \varphi_{1}-\varphi_{2}\right)\d x\\
&+2\int_{\RR}\left(\px \varphi_{1}\right)\varphi_{2}\d x
-2\sum_{n\ne n'}\ell_{n'}\int_{\RR}\left(\px Q_{n}\right)\left(\px Q_{n'}\right)\d x.
\end{align*}
Thus, from~\eqref{tech1} and~\eqref{est:theta}, we obtain~\eqref{est:I}.

{\bf Step 3}. Expansion of $\mathcal{I}_{n}(\vec{u})$. We claim
\begin{equation}\label{est:In}
\begin{aligned}
\mathcal{I}_{n}(\vec{u})=&-2\sum_{n'=n}^{N}\int_{\RR}\left(\px Q_{n'}\right)
\left(\ell_{n'}\px Q_{n'}+\ell_{n'}\px \varphi_{1}-\varphi_{2}\right)\d x\\
&+2\int_{\RR}\chi_{n}(\px \varphi_{1})\varphi_{2}\d x+O\left(\frac{\|\vec{\varphi}\|_{\E}^{2}}{L}+\|\vec{\varphi}\|_{\E}^{3}+e^{-3\gamma_{0}L}\right).
\end{aligned}
\end{equation}
We decompose
\begin{equation*}
\mathcal{I}_{n}(\vec{u})=\mathcal{I}_{n}^{1}(\vec{u})+(1-\beta_{n}^{2})\mathcal{I}_{n}^{2}(\vec{u}),
\end{equation*}
where
\begin{equation*}
\mathcal{I}_{n}^{1}(\vec{u})=2\int_{\RR}\left(\chi_{n}\px u_{1}\right)u_{2}\d x,\quad 
\mathcal{I}_{n}^{2}(\vec{u})=\int_{\RR}\left(\px \chi_{n}\right)u_{1}u_{2}\d x.
\end{equation*}

\emph{Estimate on $\mathcal{I}_{n}^{1}$.}
We claim
\begin{equation}\label{est:I1}
\begin{aligned}
\mathcal{I}_{n}^{1}(\vec{u})&=-2\sum_{n'=n}^{N}\int_{\RR}\left(\px Q_{n'}\right)
\left(\ell_{n'}\px Q_{n'}+\ell_{n'}\px \varphi_{1}-\varphi_{2}\right)\d x\\
&\quad +2\int_{\RR}\chi_{n}(\px \varphi_{1})\varphi_{2}\d x+O\left(\|\vec{\varphi}\|_{\E}^{3}+e^{-3\gamma_{0}L}\right).
\end{aligned}
\end{equation}
By direct computation and~\eqref{def:psi},
\begin{align*}
\mathcal{I}_{n}^{1}(\vec{u})
&=-2\sum_{n'=n}^{N}\int_{\RR}\left(\px Q_{n'}\right)
\left(\ell_{n'}\px \varphi_{1}+\ell_{n'}\px Q_{n'}-\varphi_{2}\right)\d x\\
&\quad +2\int_{\RR}\chi_{n}(\px \varphi_{1})\varphi_{2}\d x
+\mathcal{I}_{n}^{1,1}+\mathcal{I}_{n}^{1,2}
+\mathcal{I}_{n}^{1,3},
\end{align*}
where
\begin{align*}
&\mathcal{I}_{n}^{1,1}=-2\sum_{n'\ne n''}\ell_{n'}
\int_{\RR}\chi_{n}\left(\px Q_{n'}\right)\left(\px Q_{n''}\right)\d x,\\
&\mathcal{I}_{n}^{1,2}=-2\sum_{n'=1}^{n-1}\int_{\RR}\left(\chi_{n}\px Q_{n'}\right)
\left(\ell_{n'}\px \varphi_{1}+\ell_{n'}\px Q_{n'}-\varphi_{2}\right)\d x,\\
&\mathcal{I}_{n}^{1,3}=-2\sum_{n'=n}^{N}\int_{\RR}\left(\chi_{n}-1\right)\left(\px Q_{n'}\right)
\left(\ell_{n'}\px \varphi_{1}+\ell_{n'}\px Q_{n'}-\varphi_{2}\right)\d x.
\end{align*}
From~\eqref{tech1} and~\eqref{est:theta}, 
\begin{equation*}
\big|\mathcal{I}_{n}^{1,1}\big|\lesssim \sum_{n'\ne n''}\int_{\RR}|\px Q_{n'}| |\px Q_{n''}|\d x\lesssim \theta\lesssim e^{-3\gamma_{0}L}.
\end{equation*} 
By~\eqref{est:n'len2},~\eqref{est:n'len},~~\eqref{est:nlen'2},~\eqref{est:nlen'}, the Cauchy-Schwarz inequality and the Young's inequality, we have 
\begin{align*}
\left|\mathcal{I}_{n}^{1,2}\right|
&\lesssim \sum_{n'=1}^{n-1}\left(\|\vec{\varphi}\|_{\E}\|\chi_{n}\px Q_{n'}\|_{L^{2}}\right)+\sum_{n'=1}^{n-1}\int_{\RR}(\px Q_{n'})^{2}\chi_{n} \d x\\
&\lesssim \|\vec{\varphi}\|_{\E}^{3}+\sum_{n'=1}^{n-1}\|\chi_{n}\px Q_{n'}\|^{\frac{3}{2}}_{L^{2}}+\sum_{n'=1}^{n-1}\int_{\RR}(\px Q_{n'})^{2}\chi_{n} \d x\lesssim \|\vec{\varphi}\|_{\E}^{3}
+e^{-3\gamma_{0}L},
\end{align*}
and
\begin{align*}
\left|\mathcal{I}_{n}^{1,3}\right|
&\lesssim \sum_{n'=n}^{N}\left(\|\vec{\varphi}\|_{\E}\|(\chi_{n}-1)\px Q_{n'}\|_{L^{2}}\right)
+\sum_{n'=n}^{N}\int_{\RR}(\px Q_{n'})^{2}(1-\chi_{n})\d x\\
&\lesssim \|\vec{\varphi}\|_{\E}^{3}+\sum_{n'=n}^{N}\left(\|(\chi_{n}-1)\px Q_{n'}\|^{\frac{3}{2}}_{L^{2}}+\int_{\RR}(\px Q_{n'})^{2}(1-\chi_{n})\d x\right)\lesssim \|\vec{\varphi}\|_{\E}^{3}
+e^{-3\gamma_{0}L}.
\end{align*}
We see that~\eqref{est:I1} follows from above estimates.

\emph{Estimate on $\mathcal{I}_{n}^{2}$.}
We decompose
\begin{equation*}
\mathcal{I}_{n}^{2}(\vec{u})=\mathcal{I}_{n}^{2,1}+\mathcal{I}_{n}^{2,2}+\mathcal{I}_{n}^{2,3},
\end{equation*}
where
\begin{align*}
\mathcal{I}_{n}^{2,1}
&=-\sum_{n',n''=1}^{N}\ell_{n'}\int_{\RR}(\px \chi_{n})(\px Q_{n'})Q_{n''}\d x,\\
\mathcal{I}_{n}^{2,2}&=\sum_{n'=1}^{N}\int_{\RR}(\px \chi_{n})\left(-\ell_{n'}(\px Q_{n'})\vp+Q_{n'}\vpp\right)\d x,\\
 \mathcal{I}_{n}^{2,3}&=\int_{\RR}(\px \chi_{n})\vp\vpp\d x.
\end{align*}
By~\eqref{est:1lenleN}, the Cauchy-Schwarz inequality and the Young's inequality,
\begin{align*}
\left|\mathcal{I}_{n}^{2,1}\right|\lesssim \sum_{n'=1}^{N}\int_{\RR}\left(|\px Q_{n'}|^{2}+|Q_{n'}|^{2}\right)\textbf{1}_{\Omega_{n}}\d x\lesssim e^{-4\gamma_{0} L},
\end{align*}
and
\begin{align*}
\left|\mathcal{I}_{n}^{2,2}\right|
&\lesssim \sum_{n'=1}^{N}\|\vec{\varphi}\|_{\E}\left(\|\px Q_{n'}\textbf{1}_{\Omega_{n}}\|_{L^{2}}+\|Q_{n'}\textbf{1}_{\Omega_{n}}\|_{L^{2}}\right)\\ 
&\lesssim \|\vec{\varphi}\|_{\E}^{3}+\sum_{n'=1}^{N}\left(\|\px Q_{n'}\textbf{1}_{\Omega_{n}}\|^{\frac{3}{2}}_{L^{2}}+\|Q_{n'}\textbf{1}_{\Omega_{n}}\|^{\frac{3}{2}}_{L^{2}}\right)
\lesssim \|\vec{\varphi}\|_{\E}^{3}+e^{-3\gamma_{0} L}.
\end{align*}
Moreover, from~\eqref{pxchi} and $a^{\alpha}=\frac{L}{10}$,
\begin{equation*}
\left|\mathcal{I}_{n}^{2,3}\right|\lesssim \frac{1}{(t+a)^{\alpha}}\int_{\RR}|\vp||\vpp|\textbf{1}_{\Omega_{n}}\d x\lesssim \frac{\|\vec{\varphi}\|_{\E}^{2}}{(t+a)^{\alpha}}\lesssim \frac{\|\vec{\varphi}\|_{\E}^{2}}{L}.
\end{equation*}
From above estimates, we conclude that
\begin{equation}\label{est:In2}
\begin{aligned}
\left|\mathcal{I}_{n}^{2}\right|\lesssim \frac{\|\vec{\varphi}\|_{\E}^{2}}{L}+ \|\vec{\varphi}\|_{\E}^{3}+e^{-3\gamma_{0}L}.
\end{aligned}
\end{equation}
We see that~\eqref{est:In} follows from~\eqref{est:I1} and~\eqref{est:In2}.

{\bf{Step 4.}} Expansion of $E_{n}(\vec{u})$. We claim
\begin{equation}\label{est:En}
\begin{aligned}
E_{n}(\vec{u})
&=\sum_{n'=n}^{N}(1-\ell^{2}_{n'})^{\frac{1}{2}}E(\vec{Q})
\\&\quad+2\sum_{n'=n}^{N}\int_{\RR}\left(\ell_{n'}\px Q_{n'}\right)\left(\ell_{n'}\px Q_{n'}+\ell_{n'}\px \varphi_{1}-\varphi_{2}\right)\d x\\
&\quad +\int_{\RR}\left((\px \varphi_{1})^{2}+\varphi_{1}^{2}+\varphi_{2}^{2}-\sum_{n'=n}^{N}f'(Q_{n'})\varphi_{1}^{2}\right)\chi_{n}\d x\\
&\quad +O\left(\|\vec{\varphi}\|_{\E}^{3}+\|\vec{\varphi}\|_{\E}^{q+1}
+e^{-3\gamma_{0}L}\right).
\end{aligned}
\end{equation}
First, from~\eqref{def:psi}, $-(1-\ell_{n'}^{2})\partial_{x}^{2}Q_{n'}+Q_{n'}-f(Q_{n'})=0$, integration by parts and an elementary computation,
\begin{align*}
E_{n}(\vec{u})=
&E_{n}(\vec{U})+2\sum_{n'=n}^{N}\int_{\RR}
\left(\ell_{n'}\px Q_{n'}\right)\left(\ell_{n'}\px \varphi_{1}-\varphi_{2}\right)\d x\\
&+\int_{\RR}\bigg(\left(\px \varphi_{1}\right)^{2}+\varphi_{1}^{2}+\varphi_{2}^{2}-\sum_{n'=n}^{N}f'(Q_{n'})\varphi_{1}^{2}\bigg)\chi_{n}\d x\\
&+\tilde{E}_{n}^{1}+\tilde{E}_{n}^{2}+\tilde{E}_{n}^{3}+\tilde{E}_{n}^{4}+\tilde{E}_{n}^{5}+\tilde{E}_{n}^{6},
\end{align*}
where
\begin{align*}
\tilde{E}_{n}^{1}&=-2\int_{\RR}
\bigg(F(U_{1}+\varphi_{1})-F(U_{1})-f(U_{1})\varphi_{1}-\frac{1}{2}f'(U_{1})\varphi_{1}^{2}\bigg)\chi_{n}\d x,
\\
\tilde{E}_{n}^{2}&=-2\int_{\RR}G\varphi_{1}\chi_{n}\d x,\\ 
\tilde{E}_{n}^{3}&=-\int_{\RR} \bigg(f'(U_{1})-\sum_{n'=n}^{N}f'(Q_{n'})\bigg)\varphi_{1}^{2}\chi_{n}\d x,
\end{align*}
and
\begin{align*}
 \tilde{E}_{n}^{4}&=-2\sum_{n'=1}^{N}(1-\ell^{2}_{n'})\int_{\RR}\big(\px Q_{n'}\big)\big(\px \chi_{n}\big)\varphi_{1}\d x,
\\
\tilde{E}_{n}^{5}&=2\sum_{n'=1}^{n-1}\int_{\RR}
\big(\ell_{n'}\px Q_{n'}\big)\big(\ell_{n'}\px \varphi_{1}-\varphi_{2}\big)\chi_{n}\d x,
\\
\tilde{E}_{n}^{6}&=2\sum_{n'=n}^{N}\int_{\RR}\big(\ell_{n'}\px Q_{n'}\big)\big(\ell_{n'}\px \vp-\vpp\big)(\chi_{n}-1)\d x.
\end{align*}
Using again~\eqref{est:tay7}, $1<q<p<\infty$ and the Sobolev embedding Theorem, we obtain
\begin{equation*}
\big|\tilde{E}_{n}^{1}\big|
\lesssim \int_{\RR}\left(|\vp|^{3}+|\vp|^{q+1}+|\vp|^{p+1}\right)\d x\lesssim \|\vec{\varphi}\|_{\E}^{3}+\|\vec{\varphi}\|_{\E}^{q+1}.
\end{equation*}
By~\eqref{tech3},~\eqref{est:theta}, the Cauchy-Schwarz inequality and the Young's inequality, 
\begin{equation*}
\left|\tilde{E}_{2}\right|\lesssim \|G\|_{L^{2}}\|\vp\|_{L^{2}}\lesssim \|\vp\|_{L^{2}}^{3}+\theta^{\frac{3}{2}}\lesssim 
\|\vp\|_{L^{2}}^{3}+e^{-\frac{9}{2}\gamma_{0} L},
\end{equation*}
Recall that, for any $x\in {\rm{supp}}\chi_{n}$ and $1\le n'\le n-1$,
\begin{align*}
|x-y_{n'}(t)|\ge \frac{1}{4}(\ell_{n}-\ell_{n-1})t+\frac{L}{4}\ge \frac{L}{4},
\end{align*}
for $L$ large enough. Thus, from $1<q<p<\infty$ and $\gamma_{0}\le \frac{p-1}{8}$, we have 
\begin{equation*}
\sum_{n'=1}^{n-1}\|f'(Q_{n'})\chi_{n}\|_{L^{\infty}}\lesssim e^{-\frac{1}{4}(q-1)L}\lesssim e^{-2\gamma_0L}.
\end{equation*}
Therefore, using again~\eqref{tech3},~\eqref{est:theta}, the Sobolev embedding Theorem and the Young's inequality,
\begin{align*}
 |\tilde{E}_{n}^{3} |&\lesssim \int_{\RR}\bigg(\big|f'(U_{1})-\sum_{n'=1}^{N}f'(Q_{n'})\big|+\sum_{n'=1}^{n-1}|f'(Q_{n'})|\bigg)\vp^{2}|\chi_{n}|\d x\\
 &\lesssim \|\vec{\varphi}\|_{\E}^{2}\left(\|f'(U_{1})-\sum_{n'=1}^{N}f'(Q_{n'})\|_{L^{2}}
 +\sum_{n'=1}^{n-1}\|f'(Q_{n'})\chi_{n}\|_{L^{\infty}}\right)\\
&\lesssim \|\vec{\varphi}\|_{\E}^{3}+\theta^{3}+\sum_{n'=1}^{n-1}\|f'(Q_{n'})\chi_{n}\|_{L^{\infty}}^{3}\lesssim \|\vec{\varphi}\|_{\E}^{3}+e^{-6\gamma_0L}.
\end{align*}
Next, by~\eqref{est:n'len},~\eqref{est:nlen'},~\eqref{est:1lenleN}, the Cauchy-Schwarz inequality and the Young's inequality, we have
\begin{align*}
 |\tilde{E}_{n}^{4} |&\lesssim \sum_{n'=1}^{N}\|\vec{\varphi}\|_{\E}\|\px Q_{n'}\textbf{1}_{\Omega_{n}}\|_{L^{2}}\lesssim\|\vec{\varphi}\|_{\E}^{3}+\sum_{n'=1}^{N}\|\px Q_{n'}\textbf{1}_{\Omega_{n}}\|_{L^{2}}^{\frac{3}{2}}
\lesssim \|\vec{\varphi}\|_{\E}^{3}+e^{-3\gamma_{0}L},
\\
 |\tilde{E}_{n}^{5} |&\lesssim \sum_{n'=1}^{n-1}\|\vec{\varphi}\|_{\E}\|\chi_{n}\px Q_{n'}\|_{L^{2}}\lesssim\|\vec{\varphi}\|^{3}_{\E}+\sum_{n'=1}^{n-1}\|\chi_{n}\px Q_{n'}\|_{L^{2}}^{\frac{3}{2}}\lesssim \|\vec{\varphi}\|_{\E}^{3}+e^{-3\gamma_{0}L},
\\
 |\tilde{E}_{n}^{6} |&\lesssim \sum_{n'=n}^{N}\|\vec{\varphi}\|_{\E}\|(\chi_{n}-1)\px Q_{n'}\|_{L^{2}}\lesssim\|\vec{\varphi}\|^{3}_{\E}+\sum_{n'=n}^{N}\|(\chi_{n}-1)\px Q_{n'}\|_{L^{2}}^{\frac{3}{2}}\lesssim \|\vec{\varphi}\|_{\E}^{3}+e^{-3\gamma_{0}L}.
\end{align*}

Second, by direct computation, 
\begin{align*}
E_{n}(\vec{U})
=&\sum_{n'=n}^{N}(1-\ell^{2}_{n'})^{\frac{1}{2}}E(\vec{Q})+2\sum_{n'=n}^{N}\int_{\RR}\left(\ell_{n'}\px Q_{n'}\right)^{2}\d x\\
&+\tilde{E}_{n}^{7}+\tilde{E}_{n}^{8}+\tilde{E}_{n}^{9}+\tilde{E}_{n}^{10},
\end{align*}
where
\begin{align*}
\tilde{E}_{n}^{7}
&=-2\int_{\RR}\bigg(F(U_{1})-\sum_{n'=1}^{N}F(Q_{n'})\bigg)\chi_{n}\d x,
\\
\tilde{E}_{n}^{8}
&=\sum_{n'=1}^{n-1}\int_{\RR}\left((1+\ell_{n'}^{2})(\px Q_{n'})^{2}+Q_{n'}^{2}-2F(Q_{n'})\right)\chi_{n}\d x,
\end{align*}
and
\begin{align*}
\tilde{E}_{n}^{9}
&=\sum_{n'=n}^{N}\int_{\RR}\left((1+\ell_{n'}^{2})(\px Q_{n'})^{2}+Q_{n'}^{2}-2F(Q_{n'})\right)\left(\chi_{n}-1\right)\d x,
\\
\tilde{E}_{n}^{10}
&=\sum_{n'\ne n''}
\int_{\RR}\left((1+\ell_{n'}\ell_{n''})(\px Q_{n'})(\px Q_{n''})+Q_{n'}Q_{n''}\right)\chi_{n}\d x.
\end{align*}
By~\eqref{tech2} and~\eqref{est:theta},
\begin{equation*}
 |\tilde{E}_{n}^{7} |
\lesssim \int_{\RR}\big|F\left(U_{1}\right)-\sum_{n'=1}^{N}F\left(Q_{n'}\right)\big|\d x\lesssim \theta
\lesssim e^{-3\gamma_{0}L}.
\end{equation*}
From $1<q<p<\infty$,~\eqref{est:n'len2} and~\eqref{est:nlen'2},
\begin{align*}
 |\tilde{E}_{n}^{8} |&\lesssim \sum_{n'=1}^{n-1}\int_{\RR}\left((\px Q_{n'})^{2}+Q^{2}_{n'}+|Q_{n'}|^{p+1}+|Q_{n'}|^{q+1}\right)\chi_{n}\d x\\
&\lesssim \sum_{n'=1}^{n-1}\int_{\RR}\left((\px Q_{n'})^{2}+Q^{2}_{n'}\right)\chi_{n}\d x\lesssim e^{-4\gamma_{0}L},
\end{align*}
and
\begin{align*}
 |\tilde{E}_{n}^{9} |&\lesssim \sum_{n'=n}^{N}\int_{\RR}\left((\px Q_{n'})^{2}+Q^{2}_{n'}+|Q_{n'}|^{p+1}+|Q_{n'}|^{q+1}\right)(1-\chi_{n})\d x\\
&\lesssim \sum_{n'=n}^{N}\int_{\RR}\left((\px Q_{n'})^{2}+Q^{2}_{n'}\right)(1-\chi_{n})\d x\lesssim e^{-4\gamma_{0}L}.
\end{align*}
Last, using again~\eqref{tech1} and~\eqref{est:theta}, we have
\begin{equation*}
 |\tilde{E}_{n}^{10} |
\lesssim \sum_{n'\ne n''}\int_{\RR}\left(\left|(\px Q_{n'})(\px Q_{n''})\right|+\left|Q_{n'}Q_{n''}\right|\right)\d x\lesssim \theta
\lesssim e^{-3\gamma_{0}L}.
\end{equation*}
We see that~\eqref{est:En} follows from above estimates.

{\bf{Step 5.}} Estimate of $F_{n}(\vec{u})$. We claim
\begin{equation}\label{est:Fn}
\left|F_{n}(\vec{u})\right|\lesssim \frac{\|\vec{\varphi}\|^{2}_{\E}}{L}+\|\vec{\varphi}\|_{\E}^{3}+e^{-3\gamma_{0}L}.
\end{equation}
From~\eqref{def:psi}, we decompose
\begin{equation*}
F_{n}(\vec{u})=F_{n}^{1}+F_{n}^{2}+F_{n}^{3},
\end{equation*}
where
\begin{align*}
F_{n}^{1}&=-\frac{\alpha}{t+a}\int_{\RR}\left(x-\beta_{n}t-\bar{y}^{0}_{n}\right)(\vp\vpp) (\px \chi_{n} )\d x,\\
F_{n}^{2}&=\frac{\alpha}{t+a}\sum_{n',n''=1}^{N}\ell_{n''}
\int_{\RR}\left( x-\beta_{n}t-\bar{y}^{0}_{n}\right)\left(Q_{n'}\px Q_{n''}\right) (\px \chi_{n} )\d x,
\\
F_{n}^{3}&=-\frac{\alpha}{t+a}\sum_{n'=1}^{N}\int_{\RR}\left(x-\beta_{n}t-\bar{y}^{0}_{n}\right)\left(\vpp Q_{n'}-\ell_{n'}\vp \px Q_{n'}\right) (\px \chi_{n} )\d x.
\end{align*}
First, from~\eqref{pxchi} and $a=\left(\frac{L}{10}\right)^{\frac{1}{\alpha}}>\frac{L}{10}$,
\begin{equation*}
\left|F_{n}^{1}\right|\lesssim \frac{1}{(t+a)}\int_{\RR}|\vp||\vpp|\d x\lesssim \frac{\|\vec{\varphi}\|^{2}_{\E}}{L}.
\end{equation*}
Next, using again~\eqref{pxchi},~\eqref{est:1lenleN}, the AM-GM inequality and the Young's inequality,
\begin{align*}
\left|F_{n}^{2}\right|
&\lesssim \frac{1}{(t+a)}\sum_{n',n''=1}^{N}\int_{\RR}\left|Q_{n'}\right|\left|\px Q_{n''}\right|\textbf{1}_{\Omega_{n}}\d x\\
&\lesssim \sum_{n'=1}^{N}\left(\|\px Q_{n'}\textbf{1}_{\Omega_{n}}\|^{2}_{L^{2}}+\|Q_{n'}\textbf{1}_{\Omega_{n}}\|^{2}_{L^{2}}\right)\lesssim
e^{-4\gamma_0L},
\end{align*}
and
\begin{align*}
\left|F_{n}^{3}\right|&\lesssim \frac{1}{(t+a)}\sum_{n'=1}^{N}\int_{\RR}\left(\left|\vpp Q_{n'}\right|+\left|\vp \px Q_{n'}\right|\right)\textbf{1}_{\Omega_{n}}\d x\\
&\lesssim \sum_{n'=1}^{N}\|\vec{\varphi}\|_{\E}\left(\|\px Q_{n'}\textbf{1}_{\Omega_{n}}\|_{L^{2}}+\|Q_{n'}\textbf{1}_{\Omega_{n}}\|_{L^{2}}\right)\\
&\lesssim \sum_{n'=1}^{N}\left(\|\px Q_{n'}\textbf{1}_{\Omega_{n}}\|^{\frac{3}{2}}_{L^{2}}+\|Q_{n'}\textbf{1}_{\Omega_{n}}\|^{\frac{3}{2}}_{L^{2}}+\|\vec{\varphi}\|^{3}_{\E}\right)\lesssim \|\vec{\varphi}\|^{3}_{\E}+
e^{-3\gamma_0L}.
\end{align*}
Gathering above estimates, we obtain~\eqref{est:Fn}.

{\bf{Step 6.}} Conclude. First, we claim 
\begin{equation}\label{est:E+En}
E(\vec{u})+\sum_{n=2}^{N}c_{n}\beta_{n}E_{n}(\vec{u})=\mathcal{E}_{1}+\mathcal{E}_{2}+\mathcal{E}_{3}
+O\left(\|\vec{\varphi}\|_{\E}^{3}+\|\vec{\varphi}\|_{\E}^{q+1}+e^{-3\gamma_{0}L}\right).
\end{equation}
where
\begin{align*}
\mathcal{E}_{1}&=\sum_{n=1}^{N}\tilde{c}_{n}(1-\ell^{2}_{n})^{\frac{1}{2}}E(\vec{Q}),\\
\mathcal{E}_{2}&=2\sum_{n=1}^{N}\tilde{c}_{n}\ell_{n}\int_{\RR}(\px Q_{n})
\left(\ell_{n}\px Q_{n}+\ell_{n}\px \varphi_{1}-\varphi_{2}\right)\d x,\\
\mathcal{E}_{3}&=\sum_{n=1}^{N}\tilde{c}_{n}\int_{\RR}\big((\px \varphi_{1})^{2}+\varphi_{1}^{2}+\varphi_{2}^{2}-f'(Q_{n})\varphi_{1}^{2}\big)\psi_{n} \d x.
\end{align*}
Indeed, from~\eqref{def:tc},~\eqref{est:E} and~\eqref{est:En} and direct computation,
\begin{align*}
&E(\vec{u})+\sum_{n=2}^{N}c_{n}\beta_{n}E_{n}(\vec{u})\\
&=\mathcal{E}_{1}+\mathcal{E}_{2}+\int_{\RR}\bigg(1+\sum_{n=1}^{N}c_{n}\beta_{n}\chi_{n}\bigg)\bigg((\px \varphi_{1})^{2}+\varphi_{1}^{2}+\varphi_{2}^{2}\bigg)\d x\\
&\quad -\sum_{n=1}^{N}\bigg[\int_{\RR}\bigg(1+\sum_{n'=1}^{n}c_{n'}\beta_{n'}\chi_{n'}\bigg)f'(Q_{n})\varphi_{1}^{2}\d x\bigg] +O\left(\|\vec{\varphi}\|_{\E}^{3}+\|\vec{\varphi}\|_{\E}^{q+1}+e^{-3\gamma_{0}L}\right).
\end{align*} 
Observe that, from~\eqref{decompsi} and the definition of $\tilde{c}_{n}$,
\begin{align*}
&\int_{\RR}\bigg(1+\sum_{n=1}^{N}c_{n}\beta_{n}\chi_{n}\bigg)\bigg((\px \varphi_{1})^{2}+\varphi_{1}^{2}+\varphi_{2}^{2}\bigg)\d x\\
&\qquad =\sum_{n=1}^{N}\tilde{c}_{n}\int_{\RR}\bigg((\px \varphi_{1})^{2}+\varphi_{1}^{2}+\varphi_{2}^{2}\bigg)\psi_{n} \d x,
\end{align*}
and
\begin{align*}
&\sum_{n=1}^{N}\bigg[\int_{\RR}\big(1+\sum_{n'=2}^{n}c_{n'}\beta_{n'}\chi_{n'}\big)f'(Q_{n})\varphi_{1}^{2}\d x\bigg]\\
&\quad =\sum_{n=1}^{N}\tilde{c}_{n}\int_{\RR}f'(Q_{n})\varphi_{1}^{2}\psi_{n} \d x +\sum_{n=1}^{N}\sum_{n'\ne n}\bigg[\bigg(1+\sum_{k=1}^{(n')^{+}}(c_{k}\beta_{k})\bigg)
\int_{\RR}f'(Q_{n})\varphi_{1}^{2}\psi_{n'} \d x\bigg],
\end{align*}
where $(n')^{+}=\min(n',n)$. Note that, by~\eqref{est:f'nnen},~the Cauchy-Schwarz inequality, the Young's inequality and the Sobolev embedding Theorem,
\begin{align*}
\sum_{n'\ne n}\bigg|
\int_{\RR}f'(Q_{n})\varphi_{1}^{2}\psi_{n'} \d x\bigg|&\lesssim \sum_{n'\ne n}\|\vec{\varphi}\|_{\E}^{2}\|f'(Q_{n})\psi_{n'}\|_{L^{2}}\\
&\lesssim \|\vec{\varphi}\|_{\E}^{3}+\sum_{n'\ne n}\|f'(Q_{n})\psi_{n'}\|_{L^{2}}^{3}\lesssim \|\vec{\varphi}\|_{\E}^{3}+e^{-6\gamma_{0}L}.
\end{align*}
We see that~\eqref{est:E+En} follows from combining these identities and estimates.

Second, we claim 
\begin{equation}\label{est:I+In}
\begin{aligned}
 c_{1}\mathcal{I}+\sum_{n=2}^{N}c_{n}\mathcal{I}_{n} &=
-2\sum_{n=1}^{N}\bigg[\bigg(\sum_{n'=1}^{n}c_{n'}\bigg)\int_{\RR}(\px Q_{n})\left(\ell_{n}\px Q_{n}+\ell_{n}\px \varphi_{1}-\varphi_{2}\right)\d x\bigg]\\
&\qquad +2\sum_{n=1}^{N}\bigg[\bigg(\sum_{n'=1}^{n}c_{n'}\bigg)\int_{\RR}(\px \vp)\vpp \psi_{n}\d x\bigg]\\
&\qquad +
O\left(\frac{\|\vec{\varphi}\|^{2}_{\E}}{L}+\|\vec{\varphi}\|_{\E}^{3}+e^{-3\gamma_{0}L}\right).
\end{aligned}
\end{equation}
Indeed, from~\eqref{est:I} and~\eqref{est:In},
\begin{align*}
&c_{1}\mathcal{I}+\sum_{n=2}^{N}c_{n}\mathcal{I}_{n}\\
&\quad =-2\sum_{n=1}^{N}\int_{\RR}\bigg(\sum_{n'=1}^{n}c_{n'}\bigg)(\px Q_{n})(\ell_{n}\px Q_{n}+\ell_{n}\px \varphi_{1}-\varphi_{2})\d x\\
&\qquad +2\int_{\RR}\bigg(c_{1}+\sum_{n=2}^{N}c_{n}\chi_{n}\bigg)(\px \varphi_{1})\vpp \d x+O\left(\frac{\|\vec{\varphi}\|^{2}_{\E}}{L}+\|\vec{\varphi}\|_{\E}^{3}+e^{-3\gamma_{0}L}\right).
\end{align*}
Using again~\eqref{decompsi}, we have
\begin{equation*}
2\int_{\RR}\bigg(c_{1}+\sum_{n=2}^{N}c_{n}\chi_{n}\bigg)(\px \varphi_{1})\vpp \d x
=2\sum_{n=1}^{N}\bigg[\bigg(\sum_{n'=1}^{n}c_{n'}\bigg)\int_{\RR}(\px \vp)\vpp \psi_{n}\d x\bigg],
\end{equation*}
which implies~\eqref{est:I+In}.

Combining~\eqref{eq:sumck2},~\eqref{def:Jn},~\eqref{def:comE},~\eqref{est:Fn},~\eqref{est:E+En} and~\eqref{est:I+In}, we obtain~\eqref{expan:E}.
\end{proof}

Next, using the standard localized argument, we obtain the following coercivity result.
\begin{lemma}[Coercivity]
There exist $0<\mu\ll 1$ such that for all $t\in[0,T_{*}(\vec{u}_{0})]$,
\begin{equation}\label{coer:HN}
\mu \|\vec{\varphi}\|_{\E}^{2}
\le \sum_{n=1}^{N}\tilde{c}_{n} H_{n}(\vec{\varphi},\vec{\varphi})+\mu^{-1}
\Bigg[\sum_{n=1}^{N}(a_{n}^{-})^{2}+\sum_{n=1}^{N}(a_{n}^{+})^{2}\Bigg]+e^{-3\gamma_0 L}.
\end{equation}
\end{lemma}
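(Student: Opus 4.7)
The plan is a standard localization argument that combines the spectral coercivity~\eqref{coer:H} of $\mathcal{H}_{\ell_n}$ applied to cut-off copies of $\vec\varphi$ near each soliton, with a summation exploiting $\sum_n\psi_n = 1$. The core idea is that $H_n$ is essentially $(\mathcal{H}_{\ell_n}(\cdot-y_n)\,\vec v,\vec v)_{L^2}$ with a $\psi_n$-weight, so $\sqrt{\psi_n}\,\vec\varphi$ is the right object on which to apply coercivity.

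First, I introduce the localized field $\vec w_n := \sqrt{\psi_n}\,\vec\varphi$, well-defined in $H^1\times L^2$ because $\sqrt{\chi},\sqrt{1-\chi}\in C^1$ by assumption. Using $\psi_n\varphi_k^2 = w_{n,k}^2$ for $k=1,2$, the identity $\sqrt{\psi_n}\partial_x\varphi_1 = \partial_x w_{n,1} - (\partial_x\sqrt{\psi_n})\varphi_1$, and the integration by parts $\int \partial_x\psi_n\cdot\varphi_1\partial_x\varphi_1\,\mathrm{d}x = -\tfrac12\int \partial_x^2\psi_n\cdot\varphi_1^2\,\mathrm{d}x$, one checks the identity
\begin{equation*}
H_n(\vec\varphi,\vec\varphi) = \bigl(\mathcal{H}_{\ell_n}(\cdot-y_n)\,\vec w_n,\vec w_n\bigr)_{L^2} + R_n,
\end{equation*}
where $R_n$ collects only the commutator terms carrying $(\partial_x\sqrt{\psi_n})^2$ or $\partial_x^2\psi_n$. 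By~\eqref{pxchi}-\eqref{est:px2chi} and $a^\alpha = L/10$, each such term is pointwise bounded by $(t+a)^{-\alpha}$, giving $|R_n|\lesssim \|\vec\varphi\|_\E^2/(t+a)^{\alpha}\lesssim \|\vec\varphi\|_\E^2/L$.

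Next, I apply the translation-invariant coercivity~\eqref{coer:H} to each $\vec w_n$, which produces the usual lower bound involving the scalar products $(\vec w_n,\vec Z_n^0)_{L^2}$ and $(\vec w_n,\vec Z_n^\pm)_{L^2}$. To transfer these back to $\vec\varphi$, I write $\sqrt{\psi_n} = 1 - (1-\sqrt{\psi_n})$: the factor $1-\sqrt{\psi_n}$ is supported at distance $\gtrsim L/4 + \gamma_1 t$ from $y_n$ by the same region analysis as in Lemma~\ref{le:error}(iv), while $\vec Z_n^0,\vec Z_n^\pm$ decay like $e^{-|x-y_n|}$ via~\eqref{Qdec} and~\eqref{est:decayUpsilon}. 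Combined with the orthogonality~\eqref{phiorth}, this yields
\begin{equation*}
(\vec w_n,\vec Z_n^0)_{L^2} = O(e^{-3\gamma_0 L}\|\vec\varphi\|_\E),\qquad (\vec w_n,\vec Z_n^\pm)_{L^2} = a_n^\pm + O(e^{-3\gamma_0 L}\|\vec\varphi\|_\E).
\end{equation*}

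Finally, I sum over $n$ with the weights $\tilde c_n$. By~\eqref{eq:sumck2}, $\tilde c_n$ is a product of positive factors, so $\min_n\tilde c_n\geq c_*>0$. The identity $\sum_n\psi_n=1$ and the same commutator estimates also give $\sum_n\|\vec w_n\|_\E^2 \geq \|\vec\varphi\|_\E^2 - O(\|\vec\varphi\|_\E^2/L)$. Assembling all contributions, absorbing the $O(\|\vec\varphi\|_\E^2/L)$ terms into $c_*\nu\|\vec\varphi\|_\E^2$ for $L$ large enough, and handling the $\|\vec\varphi\|_\E\cdot e^{-3\gamma_0 L}$ cross terms coming from squaring via Young's inequality together with $\|\vec\varphi\|_\E\lesssim 1$ from~\eqref{Bootset}, yields~\eqref{coer:HN}. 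The main obstacle is the careful bookkeeping: every localization error must be shown to be either $O(1/L)$ (absorbed into the leading coercive term for $L$ large) or genuinely $e^{-3\gamma_0 L}$ (absorbed into the error term), which requires the explicit support analysis of $\psi_n$ in the spirit of Lemma~\ref{le:error} and the preservation of positivity through the uniform lower bound on $\tilde c_n$.
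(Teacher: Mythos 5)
Your proposal is correct and follows essentially the same route as the paper: localize via $\vec\varphi_n=\sqrt{\psi_n}\,\vec\varphi$, apply the one-soliton coercivity~\eqref{coer:H} after translation, convert the scalar products back to $\vec\varphi$ using~\eqref{phiorth} and the smallness of $(1-\sqrt{\psi_n})\vec Z_n^{0,\pm}$, then sum with the positive weights $\tilde c_n$ and absorb the $O(\|\vec\varphi\|_\E^2/L)$ commutator errors. The only minor inaccuracy is the claimed rate $e^{-3\gamma_0 L}$ for the residual scalar products: the support analysis only gives distance $\gtrsim L/4$, hence $e^{-L/4}\le e^{-2\gamma_0 L}$ (as in~\eqref{est:Upsilonpsi}--\eqref{est:Qpsi}, since $\gamma_0$ may equal $\tfrac18$), but this changes nothing since these terms carry a factor $\|\vec\varphi\|_\E$ and are absorbed by Young's inequality exactly as you indicate.
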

\begin{proof}
First, we prove the localized coercivity of $H_{n}(\vec{\varphi},\vec{\varphi})$ from the coercivity property~\eqref{coer:H} around one solitary wave with the orthogonality properties~\eqref{phiorth}. By direct computation, the defintion of $\psi_{n}$ and $\sqrt{\chi},\sqrt{1-\chi}\in C^{1}$,
\begin{equation}\label{est:coerloc}
\begin{aligned}
&\int_{\R}\left((\px \vp)^{2}+2\ell_{n}(\px \vp)\vpp\right)\psi_{n}\d x\\
=&\int_{\R}\left(\left(\px \left(\vp \sqrt{\psi_{n}}\right)\right)^{2}+2\ell_{n}\left(\px \left(\vp\sqrt{\psi_{n}}\right)\right)\vpp\sqrt{\psi_{n}}\right)\d x+O\left(\frac{1}{L}\|\vec{\varphi}\|_{\E}^{2}\right).
\end{aligned}
\end{equation}
We define
\begin{equation*}
\vec{\varphi}_{n}=\left(\varphi_{1,n},\varphi_{2,n}\right)=\left(\varphi_{1}(\cdot+y_{n})\sqrt{\psi_{n}(\cdot+y_{n})},\varphi_{2}(\cdot+y_{n})\sqrt{\psi_{n}(\cdot+y_{n})}\right),
\end{equation*}
for all $n=1,\ldots,N$. From~\eqref{coer:H},~\eqref{est:coerloc} and the translation $x\to x+y_{n}$, there exists $\nu_{n}>0$ such that 
\begin{align*}
\mathcal{H}_{n}\left(\vec{\varphi},\vec{\varphi}\right)
&=\left(\mathcal{H}_{\ell_{n}}\vec{\varphi}_{n},\vec{\varphi}_{n}\right)_{L^{2}}+O\left(\frac{1}{L}\|\vec{\varphi}\|_{\E}^{2}\right)\\
&\ge \nu_{n}\|\vec{\varphi}_{n}\|^{2}_{\mathcal{H}}-\nu^{-1}_{n}\left(\left(\vec{\varphi}_{n},\vec{Z}^{0}_{\ell_{n}}\right)^{2}_{L^{2}}+\sum_{\pm}\left(\vec{\varphi}_{n},\vec{Z}^{\pm}_{\ell_{n}}\right)^{2}_{L^{2}}\right)+O\left(\frac{1}{L}\|\vec{\varphi}\|_{\E}^{2}\right).
\end{align*}
By the definition of $\psi_{n}$ and the translation $x\to x-y_{n}$,
\begin{align*}
\|\vec{\varphi}_{n}\|_{\mathcal{H}}^{2}=\int_{\R}\left((\px \vp)^{2}+\vp^{2}+\vpp^{2}\right)\psi_{n}\d x+O\left(\frac{1}{L}\|\vec{\varphi}\|_{\E}^{2}\right).
\end{align*}
Note that, from $0\le \psi_{n}\le 1$, we have ${1-\sqrt{\psi_{n}}}\le 1-\psi_{n}$. Thus, using~\eqref{est:Upsilonpsi},~\eqref{est:Qpsi}, and $\sum_{n=1}^{N}\psi_{n}=1$, we have 
\begin{equation*}
\begin{aligned}
&\|(1-\sqrt{\psi_{n}})\px Q_{n}\|_{L^{2}}+\|(1-\sqrt{\psi_{n}})\partial_{x}^{2} Q_{n}\|_{L^{2}}\\
\lesssim& \|(1-{\psi_{n}})\px Q_{n}\|_{L^{2}}+\|(1-{\psi_{n}})\partial_{x}^{2} Q_{n}\|_{L^{2}}\\
\lesssim& \sum_{n' \ne n}\|\px Q_{n}\psi_{n'}\|_{L^{2}}+\sum_{n' \ne n}\|\partial_{x}^{2} Q_{n}\psi_{n'}\|_{L^{2}}
\lesssim e^{-2\gamma_{0} L},
\end{aligned}
\end{equation*}
and
\begin{equation*}
\begin{aligned}
&\|(1-\sqrt{\psi_{n}})\Upsilon_{n,1}\|_{L^{2}}+\|(1-\sqrt{\psi_{n}})\Upsilon_{n,2}\|_{L^{2}}\\
\lesssim &\sum_{n' \ne n}\|\Upsilon_{n,1}\psi_{n'}\|_{L^{2}}+\sum_{n' \ne n}\|\Upsilon_{n,2}\psi_{n'}\|_{L^{2}}
\lesssim e^{-2\gamma_{0} L}.
\end{aligned}
\end{equation*}
Based on above estimates,~\eqref{phiorth}, the definition of $a_{n}$, the translation $x\to x-y_{n}$ and the Young's inequality, we have 
\begin{align*}
\left(\vec{\varphi}_{n},\vec{Z}^{0}_{\ell_{n}}\right)^{2}_{L^{2}}
&=\left(\vec{\varphi},(1-\sqrt{\psi_{n}})\vec{Z}^{0}_{n}\right)_{L^{2}}^{2}\\
&=O\left(\|\vec{\varphi}\|_{\E}^{3}+\|(1-\sqrt{\psi_{n}})\px Q_{n}\|^{6}_{L^{2}}+\|(1-\sqrt{\psi_{n}})\partial_{x}^{2} Q_{n}\|^{6}_{L^{2}}\right)\\
&=O\left(\|\vec{\varphi}\|_{\E}^{3}+e^{-12\gamma_0L}\right),
\end{align*}
and
\begin{align*}
\left(\vec{\varphi}_{n},\vec{Z}^{\pm}_{\ell_{n}}\right)^{2}_{L^{2}}&=\left(a_{n}^{\pm}+\left(\vec{\varphi},(1-\sqrt{\psi_{n}})\vec{Z}^{\pm}_{n}\right)_{L^{2}}\right)^{2}\\
&=(a_{n}^{\pm})^{2}+O\left(\|\vec{\varphi}\|_{\E}^{3}+\|(1-\sqrt{\psi_{n}})\Upsilon_{n,1}\|^{3}_{L^{2}}+\|(1-\sqrt{\psi_{n}})\Upsilon_{n,2}\|^{3}_{L^{2}}\right)\\
&=(a_{n}^{\pm})^{2}+O\left(\|\vec{\varphi}\|_{\E}^{3}+e^{-6\gamma_{0} L}\right).
\end{align*}
Gathering above estimates, we obtain the following localized coercivity of $H_{n}(\vec{\varphi},\vec{\varphi})$,
\begin{equation}\label{coerloc}
\begin{aligned}
H_{n}(\vec{\varphi},\vec{\varphi})
&\ge \nu_{n}\int_{\R}\left((\px \vp)^{2}+\vp^{2}+\vpp^{2}\right)\psi_{n}\d x-\nu_{n}^{-1}\left((a_{n}^{-})^{2}+(a_{n}^{+})^{2}\right)\\
&\quad +O\left(\frac{1}{L}\|\vec{\varphi}\|_{\E}^{2}+\|\vec{\varphi}\|_{\E}^{3}+e^{-6\gamma_0 L}\right), 
\end{aligned}
\end{equation}
for all $n=1,\ldots,N$.
Last, combining $\tilde{c}_{n}>0$,~\eqref{coerloc}, $\sum_{n=1}^{N}\psi_{n}=1$ and taking $L$ large enough, we obtain the coercivity property~\eqref{coer:HN}.
\end{proof}

Last, we prove the following almost monotonicity property for $\mathcal{J}_{n}$.
\begin{lemma}\label{le:virial}
There exist $C_{1}$ such that for any $n=1,\ldots,N$ and $t\in [0,T_{*}(\vec{u}_{0})]$,
\begin{equation}\label{mon:I}
\mathcal{J}_{n}(t)-\mathcal{J}_{n}(0)\le \frac{C_{1}}{L^{2\alpha-1}}\sup_{s\in[0,t]}\|\vec{\varphi}(s)\|_{\E}^{2}+C_{1}e^{-3\gamma_{0}L}.
\end{equation}
\end{lemma}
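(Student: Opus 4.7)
My plan is to prove the pointwise differential estimate
\begin{equation*}
\frac{\d }{\d t}\mathcal{J}_{n}(\vec{u}(t)) \;\le\; \frac{C}{(t+a)^{1+\alpha}}\,\|\vec{\varphi}(t)\|_{\E}^{2} \;+\; Ce^{-4(\gamma_{0}L+\gamma_{1}t)},
\end{equation*}
and then to integrate on $[0,t]$. Using $a^{\alpha}=L/10$ and $\tfrac{1}{2}<\alpha<\tfrac{4}{7}$, one has $\int_{0}^{\infty}(s+a)^{-1-\alpha}\d s\lesssim a^{-\alpha}=10/L\le CL^{-(2\alpha-1)}$ for $L$ large (since $2\alpha-2<0$), while the exponential integrates to $e^{-4\gamma_{0}L}\le e^{-3\gamma_{0}L}$; inserting the uniform bound $\sup_{s\in[0,t]}\|\vec\varphi(s)\|_{\E}^{2}$ then yields~\eqref{mon:I} directly.

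To derive the differential bound, I will compute $\tfrac{\d}{\d t}\mathcal{I}_{n}$, $\beta_{n}\tfrac{\d}{\d t}E_{n}$ and $\beta_{n}\tfrac{\d}{\d t}F_{n}$ in turn, using~\eqref{NLKGvec} to eliminate $\pt u_{1}$ and $\pt u_{2}$, and integrating by parts systematically so that every derivative falls on $\chi_{n}$. The central algebraic ingredient is the quasi-transport identity
\begin{equation*}
\pt\chi_{n}+\beta_{n}\px\chi_{n} \;=\; -\,\frac{\alpha(x-\beta_{n}t-\bar{y}_{n}^{0})}{t+a}\,\px\chi_{n},
\end{equation*}
which replaces every $\pt\chi_{n}$ by $-\beta_{n}\px\chi_{n}$ plus a piece carrying the dangerous factor $(t+a)^{-1}$. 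After reorganization, $\tfrac{\d}{\d t}(\mathcal{I}_{n}+\beta_{n}E_{n})$ splits into a \emph{principal part}
\begin{equation*}
-\int_{\RR}(\px\chi_{n})\Big[\,2(\px u_{1}+\beta_{n}u_{2})^{2}+(1-\beta_{n}^{2})\bigl(2F(u_{1})-u_{1}f(u_{1})\bigr)\Big]\d x
\end{equation*}
plus lower-order contributions in $\px^{2}\chi_{n},\px^{3}\chi_{n},\partial_{tx}\chi_{n}$ (controlled by $(t+a)^{-2\alpha}\|\vec\varphi\|_{\E}^{2}$ via~\eqref{est:px2chi}), together with a \emph{divergent part} of the form $-\tfrac{\alpha}{t+a}\int(x-\beta_{n}t-\bar{y}_{n}^{0})(\px\chi_{n})\,Q(u_{1},u_{2})\d x$, for an explicit quadratic form $Q$.

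The role of the refined term $F_{n}$ is precisely to absorb this divergent contribution: differentiating $\beta_{n}F_{n}$ in time, substituting the equation for $\pt u_{2}$, and integrating by parts once in the piece containing $u_{1}\px^{2}u_{1}$ produces exactly the cancelling contributions for the $(\px u_{1})^{2}$ and $u_{1}^{2}$ coefficients of $Q$. The residual divergent terms, involving only $(\px u_{1})u_{2}$, $u_{2}^{2}$ and $2F(u_{1})-u_{1}f(u_{1})$, are then bounded by $(t+a)^{-1-\alpha}\|\vec\varphi\|_{\E}^{2}$ after substituting $\vec u=\vec U+\vec\varphi$, using $|x-\beta_{n}t-\bar{y}_{n}^{0}|\le (t+a)^{\alpha}$ on $\mathrm{supp}\,\px\chi_{n}$, Cauchy--Schwarz, and absorbing pure-soliton and cross terms into the exponential error via Lemma~\ref{le:error}\emph{(iii)}. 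The principal part is non-positive: $-\px\chi_{n}(\px u_{1}+\beta_{n}u_{2})^{2}\le 0$ because $\px\chi_{n}\ge 0$, and on $\mathrm{supp}\,\px\chi_{n}\subset\Omega_{n}$ the bounds $|U_{1}|\lesssim e^{-2(\gamma_{0}L+\gamma_{1}t)}$ from Lemma~\ref{le:error}\emph{(iii)} together with $\|\varphi_{1}\|_{L^{\infty}}\lesssim\|\vec\varphi\|_{\E}$ from the Sobolev embedding and~\eqref{Bootset} ensure $|u_{1}|<r_{0}$, so that condition~\emph{(iii)} on $f$ gives $2F(u_{1})-u_{1}f(u_{1})\ge 0$ pointwise.

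The main technical obstacle will be the algebraic bookkeeping behind the cancellation between $\beta_{n}\tfrac{\d}{\d t}F_{n}$ and the divergent part of $\tfrac{\d}{\d t}(\mathcal{I}_{n}+\beta_{n}E_{n})$: the specific weights $\tfrac{1-\beta_{n}^{2}}{2}$ in $\mathcal{I}_{n}$ and $-\tfrac{\alpha}{t+a}(x-\beta_{n}t-\bar{y}_{n}^{0})$ in $F_{n}$ are engineered precisely so that the would-be $(t+a)^{-1}\|\vec\varphi\|_{\E}^{2}$ obstruction to time-integration is removed and replaced by the integrable $(t+a)^{-1-\alpha}\|\vec\varphi\|_{\E}^{2}$ above. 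Once this cancellation is verified and the residual terms are estimated via Lemma~\ref{le:error} and \eqref{est:px2chi}, integrating in time and invoking the bootstrap yields~\eqref{mon:I}.
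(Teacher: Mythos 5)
Your plan follows the same architecture as the paper's proof (compute $\tfrac{\d}{\d t}\mathcal{I}_{n}$, $\beta_{n}\tfrac{\d}{\d t}E_{n}$, $\beta_{n}\tfrac{\d}{\d t}F_{n}$ via the transport identity for $\pt\chi_{n}$, use $F_{n}$ to cancel the divergent $(\px u_{1})^{2}+u_{1}^{2}$ contributions, exploit the sign of $u_{1}f(u_{1})-2F(u_{1})$ on $\Omega_{n}$ through the smallness of $u_{1}$ there, and control the $\px^{2}\chi_{n},\px^{3}\chi_{n},\partial_{tx}\chi_{n}$ terms by \eqref{est:px2chi}), but it has a genuine gap at the decisive step: the treatment of the residual divergent cross term, which after the $F_{n}$ cancellation reads
\begin{equation*}
-\frac{2\alpha}{(t+a)^{1-\alpha}}\int_{\RR}\left(\frac{x-\beta_{n}t-\bar{y}^{0}_{n}}{(t+a)^{\alpha}}\right)u_{2}\,(\px u_{1}+\beta_{n}u_{2})\,\px\chi_{n}\,\d x .
\end{equation*}
On $\mathrm{supp}\,\px\chi_{n}$ one only has $|x-\beta_{n}t-\bar{y}^{0}_{n}|\le (t+a)^{\alpha}$ and $|\px\chi_{n}|\lesssim (t+a)^{-\alpha}$ by \eqref{pxchi}, so the total weight is of size $(t+a)^{-1}$, and the brutal Cauchy--Schwarz bound after substituting $\vec{u}=\vec{U}+\vec{\varphi}$ gives only $(t+a)^{-1}\|\vec{\varphi}\|_{\E}^{2}$ (plus exponential terms), not the $(t+a)^{-1-\alpha}\|\vec{\varphi}\|_{\E}^{2}$ you claim. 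Since $\int_{0}^{t}(s+a)^{-1}\d s=\log(1+t/a)$ is unbounded in $t$, this route cannot yield the uniform-in-time estimate \eqref{mon:I}. The paper does not estimate this term brutally: it splits it by Young's inequality with $(t+a)$-dependent weights (writing $1-\alpha=(1-\tfrac{5}{4}\alpha)+\tfrac{1}{4}\alpha$), absorbs the piece proportional to $\int(\px u_{1}+\beta_{n}u_{2})^{2}\px\chi_{n}\,\d x$ into the negative principal term $-2\int(\px u_{1}+\beta_{n}u_{2})^{2}\px\chi_{n}\,\d x$ for $L$ large, and bounds the remaining piece by $(t+a)^{-(2-\frac{3}{2}\alpha)}\bigl(\|\vec{\varphi}\|_{\E}^{2}+e^{-4(\gamma_{0}L+\gamma_{1}t)}\bigr)$, which is time-integrable because $2-\tfrac{3}{2}\alpha>2\alpha>1$. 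Without this absorption step your argument does not close.

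Two further, more minor remarks. First, your announced pointwise rate $(t+a)^{-1-\alpha}\|\vec{\varphi}\|_{\E}^{2}$ is in any case inconsistent with the $(t+a)^{-2\alpha}\|\vec{\varphi}\|_{\E}^{2}$ lower-order contributions you yourself acknowledge (note $2\alpha<1+\alpha$); this is harmless, since $\int_{0}^{\infty}(s+a)^{-2\alpha}\d s\lesssim a^{1-2\alpha}=(L/10)^{-(2\alpha-1)/\alpha}\lesssim L^{-(2\alpha-1)}$, so the integration still gives \eqref{mon:I}, but your stated computation with $a^{-\alpha}$ should be replaced accordingly. Second, for the divergent piece carrying $u_{1}f(u_{1})-2F(u_{1})$ it is cleaner to argue as the paper does, by checking that the combined coefficient $1-\beta_{n}^{2}-\tfrac{\alpha\beta_{n}}{(t+a)^{1-\alpha}}\bigl(\tfrac{x-\beta_{n}t-\bar{y}^{0}_{n}}{(t+a)^{\alpha}}\bigr)$ stays positive for $L$ large and then using the sign of $u_{1}f(u_{1})-2F(u_{1})$ on $\Omega_{n}$, rather than bounding it; a brutal bound again only produces the non-integrable $(t+a)^{-1}$ weight.
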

\begin{proof}
{\bf Step 1}. Time variation of $\mathcal{I}_{n}$. We claim
\begin{equation}\label{estI}
\begin{aligned}
\frac{\d}{\d t}\mathcal{I}_{n}
&=-2\int_{\RR}(\px u_{1}+\beta_{n}u_{2})^{2}\px \chi_{n}\d x\\
&\quad +(1-\beta_{n}^{2})\int_{\RR}\big(u_{1}f(u_{1})-2F(u_{1})\big)\px \chi_{n}\d x\\
&\quad+\beta_{n}^{2}\int_{\RR}\big((\px u_{1})^{2}+u_{1}^{2}+u_{2}^{2}\big)\px \chi_{n}\d x
\\
&\quad +2\beta_{n}\int_{\RR}(\px u_{1})u_{2}\px \chi_{n}\d x
-2\beta_{n}^{2}\int_{\RR}F(u_{1})\px \chi_{n}\d x\\
&\quad -\frac{\alpha}{(t+a)^{1-\alpha}}\int_{\RR}\left(\frac{x-\beta_{n}t-\bar{y}^{0}_{n}}{(t+a)^{\alpha}}\right)\big(2(\px u_{1})u_{2}\big)\px \chi_{n}\d x\\
&\quad+O\left(\frac{1}{(t+a)^{2\alpha}}\|\vec{\varphi}\|_{\E}^{2}+e^{-4\left(\gamma_{0} L+\gamma_{1} t\right)}\right).
\end{aligned}
\end{equation}
First, using~\eqref{NLKGvec} and integrating by parts,
\begin{align*}
\frac{\rm{d}}{{\rm d}t}\int_{\RR}2\big(\chi_{n}\px u_{1}\big)u_{2}\d x
&=-\int_{\RR}\big((\px u_1)^{2}-u_{1}^{2}+u_{2}^{2}+2F(u_1)\big)\px \chi_{n}\d x\\
&\quad +2\int_{\RR}(\pt \chi_{n})(\px u_1)u_{2}\d x.
\end{align*}
Observe that 
\begin{equation}\label{ptchin}
\pt \chi_{n}=-\beta_{n}\px \chi_{n}-\frac{\alpha}{(t+a)^{1-\alpha}}\left(\frac{x-\beta_{n}t-\bar{y}^{0}_{n}}{(t+a)^{\alpha}}\right)\left(\px \chi_{n}\right).
\end{equation}
Therefore,
\begin{equation}\label{estI1}
\begin{aligned}
\frac{\rm{d}}{{\rm d}t}\int_{\RR}2\big(\chi_{n}\px u_{1}\big)u_{2}\d x=&
-\int_{\RR}\left((\px u_{1})^{2}-u_{1}^{2}+u_{2}^{2}+2\beta_{n}(\px u_{1})u_{2}\right)\px \chi_{n}\d x\\
&-\frac{\alpha}{(t+a)^{1-\alpha}}\int_{\RR}\left(\frac{x-\beta_{n}t-\bar{y}^{0}_{n}}{(t+a)^{\alpha}}\right)\big(2(\px u_{1})u_{2}\big)\px \chi_{n}\d x\\
&-2\int_{\RR}F(u_{1})\px \chi_{n} \d x.
\end{aligned}
\end{equation}
Second, using again~\eqref{NLKGvec} and integrating by parts,
\begin{align*}
\frac{\rm{d}}{{\rm d}t}\int_{\RR}u_{1}u_{2}\px \chi_{n}\d x
&=\int_{\RR}
\left(-(\px u_1)^{2}-u_{1}^{2}+u_{2}^{2}+u_{1}f(u_{1})\right)\px \chi_{n}\d x\\
&+\int_{\RR}(u_{1}u_{2})\partial_{tx} \chi_{n}\d x+\frac{1}{2}\int_{\RR}u_{1}^{2}\px ^{3}\chi_{n} \d x.
\end{align*}
 From~\eqref{def:psi},~\eqref{est:px2chi},~\eqref{est:1lenleN} and the AM-GM inequality, we have 
\begin{align*}
\left|\int_{\RR}(u_{1}u_{2})\partial_{tx} \chi_{n}\d x\right|
&\lesssim \frac{1}{(t+a)^{2\alpha}}\int_{\RR} \bigg(\sum_{n'=1}^{N}(Q_{n'})^{2}+\sum_{n'=1}^{N}(\px Q_{n'})^{2}+\varphi_{1}^{2}+\vpp^{2}\bigg) \textbf{1}_{\Omega_{n}}\d x \\
&\lesssim \frac{1}{(t+a)^{2\alpha}}\|\vec{\varphi}\|_{\E}^{2}+\sum_{n'=1}^{N}\left(\|\px Q_{n'}\textbf{1}_{\Omega_{n}}\|^{2}_{L^{2}}+\|Q_{n'}\textbf{1}_{\Omega_{n}}\|^{2}_{L^{2}}\right)\\
&\lesssim \frac{1}{(t+a)^{2\alpha}}\|\vec{\varphi}\|_{\E}^{2}+e^{-4(\gamma_{0}L+\gamma_{1}t)},
\end{align*}
and
\begin{align*}
\left|\int_{\RR}u_{1}^{2}\px ^{3}\chi_{n} \d x\right|&\lesssim \frac{1}{(t+a)^{3\alpha}}\int_{\RR}\left(\sum_{n'=1}^{N}(Q_{n'})^{2}+\sum_{n'=1}^{N}(\px Q_{n'})^{2}+\varphi_{1}^{2}\right)\textbf{1}_{\Omega_{n}}\d x\\
&\lesssim \frac{1}{(t+a)^{3\alpha}}\|\vec{\varphi}\|_{\E}^{2}+\sum_{n'=1}^{N}\left(\|\px Q_{n'}\textbf{1}_{\Omega_{n}}\|^{2}_{L^{2}}+\|Q_{n'}\textbf{1}_{\Omega_{n}}\|^{2}_{L^{2}}\right)
\\
&\lesssim \frac{1}{(t+a)^{3\alpha}}\|\vec{\varphi}\|_{\E}^{2}+e^{-4(\gamma_{0}L+\gamma_{1}t)}.
\end{align*}
It follows that
\begin{equation}\label{estI2}
\begin{aligned}
\frac{\rm{d}}{{\rm d}t}\int_{\RR}u_{1}u_{2}\px \chi_{n}\d x
&=\int_{\RR}
\left(-(\px u_1)^{2}-u_{1}^{2}+u_{2}^{2}+u_{1}f(u_{1})\right)\px \chi_{n}\d x\\
&+O\left(\frac{1}{(t+a)^{2\alpha}}\|\vec{\varphi}\|_{\E}^{2}+e^{-4(\gamma_{0}L+\gamma_{1}t)}\right).
\end{aligned}
\end{equation}
Gathering~\eqref{estI1} and~\eqref{estI2}, we obtain~\eqref{estI}.

{\bf{Step 2.}} Time variation of $E_{n}$. We claim
\begin{equation}\label{estE}
\begin{aligned}
\frac{\d}{\d t}E_{n}
&=-2\int_{\RR}\big((\px u_{1})u_{2}\big)\px \chi_{n}\d x-\beta_{n}\int_{\RR}\big((\px u_{1})^{2}+u_{1}^{2}+u_{2}^{2}\big)\px \chi_{n}\d x\\
&\quad+2\beta_{n}\int_{\RR}F(u_{1})\px \chi_{n}\d x\\
&\quad +\frac{2\alpha}{(t+a)^{1-\alpha}}\int_{\RR}\left(\frac{x-\beta_{n}t-\bar{y}^{0}_{n}}{(t+a)^{\alpha}}\right)F(u_{1})\px \chi_{n}\d x\\
&\quad-\frac{\alpha}{(t+a)^{1-\alpha}}\int_{\RR}\left(\frac{x-\beta_{n}t-\bar{y}^{0}_{n}}{(t+a)^{\alpha}}\right)
\big((\px u_{1})^{2}+u_{1}^{2}+u_{2}^{2}\big)\px \chi_{n}\d x.
\end{aligned}
\end{equation}

First, from~\eqref{NLKGvec},
\begin{equation*}
\pt \big[(\px u_{1})^{2}+u_{1}^{2}+u_{2}^{2}-2F(u_{1})\big]
=2(\px u_{1})\px u_{2}+2(\px^{2} u_{1})u_{2}=2\px (u_{2}\px u_{1}).
\end{equation*}
Therefore, by integration by parts,
\begin{equation*}
 \int_{\RR}\big(\pt \big[(\px u_{1})^{2}+u_{1}^{2}+u_{2}^{2}-2F(u_{1})\big]\big)\chi_{n}\d x
 =-2\int_{\RR}\big((\px u_{1})u_{2}\big)\px \chi_{n}\d x.
\end{equation*}
Second, from~\eqref{ptchin},
\begin{align*}
&\int_{\RR}\left((\px u_{1})^{2}+u_{1}^{2}+u_{2}^{2}-2F(u_{1})\right)\pt \chi_{n}\d x\\
&\quad =-\beta_{n}\int_{\RR}\left((\px u_{1})^{2}+u_{1}^{2}+u_{2}^{2}-2F(u_{1})\right)\px \chi_{n}\d x\\
 &\qquad -\frac{\alpha}{(t+a)^{1-\alpha}}\int_{\RR}\left(\frac{x-\beta_{n}t-\bar{y}^{0}_{n}}{(t+a)^{\alpha}}\right)\left((\px u_{1})^{2}+u_{1}^{2}+u_{2}^{2}-2F(u_{1})\right)
 \px \chi_{n}\d x.
\end{align*}
Gathering these identities, we obtain~\eqref{estE}.

{\bf{Step 3.}} Time variation of $F_{n}$. We claim
\begin{equation}\label{estF}
\begin{aligned}
\frac{\d}{\d t}F_{n}
&=\frac{-\alpha}{(t+a)^{1-\alpha}}\int_{\RR}\left(\frac{x-\beta_{n}t-\bar{y}^{0}_{n}}{(t+a)^{\alpha}}\right)\bigl[-(\px u_{1})^{2}-u_{1}^{2}+u_{2}^{2}+u_{1}f(u_{1})\bigr]
\px \chi_{n}\d x\\
&\quad +O\left(\frac{1}{(t+a)^{1+\alpha}}\|\vec{\varphi}\|_{\E}^{2}+e^{-4(\gamma_{0}L+\gamma_{1}t)}\right).
\end{aligned}
\end{equation}
First, from~\eqref{NLKGvec},
$
\pt (u_{1}u_{2})=u_{2}^{2}-u_{1}^{2}+u_{1}f(u_{1})+u_{1}(\px ^{2}u_{1}).
$
Therefore, by integration by parts,
\begin{align*}
&\frac{1}{(t+a)^{1-\alpha}}\int_{\RR}\left(\frac{x-\beta_{n}t-\bar{y}^{0}_{n}}{(t+a)^{\alpha}}\right)\big[\pt (u_{1}u_{2})\big]\px \chi_{n}\d x\\
&\quad =\frac{1}{(t+a)^{1-\alpha}}\int_{\RR}\left(\frac{x-\beta_{n}t-\bar{y}^{0}_{n}}{(t+a)^{\alpha}}\right)\left(-(\px u_{1})^{2}-u_{1}^{2}+u_{2}^{2}+u_{1}f(u_{1})\right)\px \chi_{n}\d x\\
&\qquad+\frac{1}{t+a}\int_{\RR}u_{1}^{2}\px^{2}\chi_{n}\d x
+\frac{1}{2(t+a)^{1-\alpha}}\int_{\RR}\left(\frac{x-\beta_{n}t-\bar{y}^{0}_{n}}{(t+a)^{\alpha}}\right)u_{1}^{2}\px^{3}\chi_{n}\d x.
\end{align*}
Observe that, from~\eqref{def:psi},~\eqref{est:px2chi},~\eqref{est:1lenleN} and the AM-GM inequality, we have 
\begin{align*}
\left|\frac{1}{t+a}\int_{\RR}u_{1}^{2}\px^{2}\chi_{n}\d x\right|
&\lesssim \frac{1}{(t+a)^{1+2\alpha}}\int_{\RR}
\left(\sum_{n'=1}^{N}Q_{n'}^{2}+\vp^{2}\right)\textbf{1}_{\Omega_{n}}\d x\\
&\lesssim \frac{1}{(t+a)^{1+2\alpha}}\|\vec{\varphi}\|_{\E}^{2}
+\sum_{n'=1}^{N}\|Q_{n'}\textbf{1}_{\Omega_{n}}\|_{L^{2}}^{2}\\
&\lesssim \frac{1}{(t+a)^{1+2\alpha}}\|\vec{\varphi}\|_{\E}^{2}
+e^{-4(\gamma_{0}L+\gamma_{1}t)},
\end{align*}
and
\begin{align*}
&\left|\frac{1}{2(t+a)^{1-\alpha}}\int_{\RR}\left(\frac{x-\beta_{n}t-\bar{y}^{0}_{n}}{(t+a)^{\alpha}}\right)u_{1}^{2}\px^{3}\chi_{n}\d x\right|\\
&\quad \lesssim \frac{1}{(t+a)^{1+2\alpha}}\int_{\RR}
\left(\sum_{n'=1}^{N}Q_{n'}^{2}+\vp^{2}\right)\textbf{1}_{\Omega_{n}}\d x\\
&\quad \lesssim \frac{1}{(t+a)^{1+2\alpha}}\|\vec{\varphi}\|_{\E}^{2}
+e^{-4(\gamma_{0}L+\gamma_{1}t)}.
\end{align*}
It follows that 
\begin{equation}\label{estFn1}
\begin{aligned}
&\frac{1}{(t+a)^{1-\alpha}}\int_{\RR}\left(\frac{x-\beta_{n}t-\bar{y}^{0}_{n}}{(t+a)^{\alpha}}\right)\big[\pt (u_{1}u_{2})\big]\px \chi_{n}\d x\\
&\quad=\frac{1}{(t+a)^{1-\alpha}}\int_{\RR}\left(\frac{x-\beta_{n}t-\bar{y}^{0}_{n}}{(t+a)^{\alpha}}\right)\left[-(\px u_{1})^{2}-u_{1}^{2}+u_{2}^{2}+u_{1}f(u_{1})\right]\px \chi_{n}\d x\\
&\qquad+O\left(\frac{1}{(t+a)^{1+2\alpha}}\|\vec{\varphi}\|_{\E}^{2}+e^{-4(\gamma_{0}L+\gamma_{1}t)}\right).
\end{aligned}
\end{equation}
Second, by direct computation and taking partial derivative of~\eqref{ptchin} with respect to $x$,
\begin{align*}
\pt \left[\left(\frac{x-\beta_{n}t-\bar{y}^{0}_{n}}{t+a}\right)\px \chi_{n}\right]
=&-\frac{\beta_{n}}{(t+a)}\px \chi_{n}-\frac{1}{(t+a)^{2-\alpha}}\left(\frac{x-\beta_{n}t-\bar{y}^{0}_{n}}{(t+a)^{\alpha}}\right)\px \chi_{n}\\
&-\frac{\beta_{n}}{(t+a)^{1-\alpha}}\left(\frac{x-\beta_{n}t-\bar{y}^{0}_{n}}{(t+a)^{\alpha}}\right)
\px^{2}\chi_{n}\\
&-\frac{\alpha}{(t+a)^{2-2\alpha}}\left(\frac{x-\beta_{n}t-\bar{y}^{0}_{n}}{(t+a)^{\alpha}}\right)^{2}\px^{2} \chi_{n}.
\end{align*}
Therefore, from~\eqref{pxchi},~\eqref{est:px2chi} and $\frac{1}{2}<\alpha<\frac{4}{7}$,
\begin{equation*}
\left|\pt \left[\left(\frac{x-\beta_{n}t-y^{0}_{n}}{t+a}\right)\px \chi_{n}\right]\right|\lesssim
\frac{1}{(t+a)^{1+\alpha}}\textbf{1}_{\Omega_{n}}.
\end{equation*}
 Based on~\eqref{def:psi},~\eqref{est:1lenleN}, the AM-GM inequality and above inequality, we have 
\begin{equation}\label{estFn2}
\begin{aligned}
&\left|\int_{\RR}(u_{1}u_{2})\pt \left[\left(\frac{x-\beta_{n}t-\bar{y}^{0}_{n}}{t+a}\right)\px \chi_{n}\right]\d x\right|\\
&\quad \lesssim \frac{1}{(t+a)^{1+\alpha}}\int_{\RR}\left(\sum_{n'=1}^{N}Q_{n'}^{2}+\sum_{n'=1}^{N}(\px Q_{n'})^{2}+\vp^{2}+\vpp^{2}\right)\textbf{1}_{\Omega_{n}}\d x\\
&\quad\lesssim \frac{1}{(t+a)^{1+\alpha}}\|\vec{\varphi}\|_{\E}^{2}+e^{-4(\gamma_{0}L+\gamma_{1}t)}.
\end{aligned}
\end{equation}
We see that~\eqref{estF} follows from~\eqref{estFn1} and~\eqref{estFn2}.

{\bf {Step 4.}} Conclude. Note that from~\eqref{estI},~\eqref{estE},~\eqref{estF} and $\frac{1}{2}<\alpha<\frac{4}{7}$,
\begin{align*}
\frac{\d}{\d t}\mathcal{J}_{n}
&=\frac{\d}{\d t}\mathcal{I}_{n}+\beta_{n}\frac{\d}{\d t}E_{n}+\beta_{n}\frac{\d}{\d t}F_{n}\\
&=\mathcal{F}_{1}+\mathcal{F}_{2}+O\left(\frac{1}{(t+a)^{2\alpha}}\|\vec{\varphi}\|_{\E}^{2}
+e^{-4(\gamma_{0}L+\gamma_{1}t)}\right),
\end{align*}
where
\begin{align*}
\mathcal{F}_{1}
=&-2\int_{\RR}(\px u_{1}+\beta_{n}u_{2})^{2}\px \chi_{n}\d x\\
&-\frac{2\alpha}{(t+a)^{1-\alpha}}\int_{\RR}\left(\frac{x-\beta_{n}t-\bar{y}^{0}_{n}}{(t+a)^{\alpha}}\right)\left(u_{2}(\px u_{1}+\beta_{n}u_{2})\right)\px \chi_{n}\d x,
\end{align*}
and
\begin{align*}
\mathcal{F}_{2}=\int_{\RR}\left[1-\beta_{n}^{2}-\frac{\alpha\beta_{n}}{(t+a)^{1-\alpha}}
\left(\frac{x-\beta_{n}t-\bar{y}^{0}_{n}}{(t+a)^{\alpha}}\right)\right]\left(u_{1}f(u_{1})-2F(u_{1})\right)\px \chi_{n}\d x.
\end{align*}
 \emph{Estimates of $\mathcal{F}_{1}$.}
We claim
\begin{equation}\label{estF1}
\mathcal{F}_{1}\le \frac{1}{(t+a)^{2\alpha}}\|\vec{\varphi}\|_{\E}^{2}+e^{-3(\gamma_{0}L+\gamma_{1}t)}.
\end{equation}
Indeed, by the AM-GM inequality and $1-\alpha=(1-\frac{5}{4}\alpha)+\frac{1}{4}\alpha$,
\begin{align*}
&\left|\frac{2\alpha}{(t+a)^{1-\alpha}}\int_{\RR}\left(\frac{x-\beta_{n}t-\bar{y}^{0}_{n}}{(t+a)^{\alpha}}\right)\left(u_{2}(\px u_{1}+\beta_{n}u_{2})\right)\px \chi_{n}\d x\right|\\
&\quad \lesssim \frac{1}{(t+a)^{2-\frac{5}{2}\alpha}}\int_{\RR}u_{2}^{2}\px \chi_{n}\d x
+\frac{1}{(t+a)^{\frac{\alpha}{2}}}\int_{\RR}(\px u_{1}+\beta_{n}u_{2})^{2}\px \chi_{n}\d x.
\end{align*}
Moreover, using~\eqref{def:psi},~\eqref{pxchi},~\eqref{est:1lenleN} and the AM-GM inequality,
\begin{align*}
\left|\frac{1}{(t+a)^{2-\frac{5}{2}\alpha}}\int_{\RR}u_{2}^{2}\px \chi_{n}\d x\right|
&\lesssim
\frac{1}{(t+a)^{2-\frac{3}{2}\alpha}}\int_{\RR}
\left(\sum_{n'=1}^{N}(\px Q_{n'})^{2}+\vpp^{2}\right)\textbf{1}_{\Omega_{n}}\d x\\
&\lesssim \frac{1}{(t+a)^{2-\frac{3}{2}\alpha}}\|\vec{\varphi}\|_{\E}^{2}+e^{-4(\gamma_{0}L+\gamma_{1}t)}.
\end{align*}
 Note that, from $\frac{1}{2}<\alpha<\frac{4}{7}$, we have $2-\frac{3}{2}\alpha>2\alpha$. Thus, from $a=\left(\frac{L}{10}\right)^{\frac{1}{\alpha}}$ and taking $L$ large enough,
\begin{align*}
\mathcal{F}_{1}\le& -2\int_{\RR}(\px u_{1}+\beta_{n}u_{2})^{2}\px \chi_{n}\d x
+\frac{1}{L^{\frac{1}{4}}}\int_{\RR}(\px u_{1}+\beta_{n}u_{2})^{2}\px \chi_{n}\d x\\
&+\frac{1}{(t+a)^{2\alpha}}\|\vec{\varphi}\|_{\E}^{2}+e^{-3(\gamma_{0}L+\gamma_{1}t)},
\end{align*}
which implies~\eqref{estF1}.

\emph{Estimates of $\mathcal{F}_{2}$.}
We claim
\begin{equation}\label{estF2}
\mathcal{F}_{2}\le 0.
\end{equation}
First, observe that, for $L$ large enough, for any $x\in \Omega_{n}$,
\begin{equation*}
1-\beta_{n}^{2}-
\frac{\alpha\beta_{n}}{(t+a)^{1-\alpha}}\left(\frac{x-\beta_{n}t-y^{0}_{n}}{(t+a)^{\alpha}}\right)
\ge 1-\beta_{n}^{2}-\left|\alpha\beta_{n}\right|\left|\frac{10}{L}\right|^{\frac{1}{\alpha}-1}>0.
\end{equation*}
Second, recall that, for any $x\in \Omega_n$ and $n'=1,\cdots,N$,
\begin{equation*}
\begin{aligned}
|x-y_{n'}(t)|
&\ge |\ell_{n'}-\beta_{n}|t+|y_{n'}(0)-\bar{y}_{n}^{0}|-\gamma_{1}t-t^{\alpha}-\frac{L}{10}\\
&\ge 8\gamma_{1} t+\frac{9L}{20}-\left(2\gamma_{1}t+\frac{L}{5}\right)\ge \frac{L}{4},
\end{aligned}
\end{equation*}
for $L$ large enough.
Thus, from the decay properties of $Q$ in~\eqref{Qdec}, for any $t\in [0,T_{*}(\vec{u}_{0})]$,
\begin{equation*}
\sum_{n'=1}^{N}|Q_{n'}(t,x)|\textbf{1}_{\Omega_{n}}\lesssim \sum_{n'=1}^{N}e^{\frac{-|x-y_{n'}(t)|}{\sqrt{1-\ell_{n'}^{2}}}}\textbf{1}_{\Omega_{n}}\lesssim e^{-\frac{L}{4}}.
\end{equation*}
Based on above estimate,~\eqref{Bootset} and $\gamma_{0}\le \frac{1}{8}$, we have, for any $x\in \Omega_n$,
\begin{align*}
|u_{1}(t,x)|
&\lesssim \sum_{n'=1}^{N}|Q_{n'}(t,x)|\textbf{1}_{\Omega_{n}}+|\vp(t,x)|\\
&\lesssim e^{-\frac{L}{4}}+\|\vec{\varphi}\|_{\E}\lesssim C_{0}\left(e^{-\gamma_{0}L}+\delta\right).
\end{align*}
Therefore, for $L$ large enough and $\delta$ small enough, we obtain for any $x\in \Omega_n$,
\begin{equation*}
u_{1}f(u_{1})-2F(u_{1})=-\frac{q-1}{q+1}|u_{1}|^{q+1}+\frac{p-1}{p+1}|u_{1}|^{p+1}\le 0,
\end{equation*}
since $1<q<p<\infty$. We conclude~\eqref{estF2} from above estimates.

Gathering estimates~\eqref{estF1} and~\eqref{estF2}, we obtain
\begin{equation*}
\frac{\d}{\d t}\mathcal{J}_{n}(t)\lesssim  \frac{1}{(t+a)^{2\alpha}}\|\vec{\varphi}(t)\|_{\E}^{2}+e^{-3(\gamma_{0}L+\gamma_{1}t)}.
\end{equation*}
Integrating on $[0,t]$ for any $t\in [0,T_{*}(\vec{u}_{0})]$, we obtain 
\begin{equation*}
\mathcal{J}_{n}(t)- \mathcal{J}_{n}(0)\lesssim \frac{1}{L^{2\alpha-1}}\max_{s\in[0,t]}\|\vec{\varphi}(s)\|_{\E}^{2}
+e^{-3\gamma_{0}L},
\end{equation*}
which implies~\eqref{mon:I}.
\end{proof}

\section{Proof of Theorem~\ref{main:theo}}\label{S:4}
 In this section, we prove Theorem~\ref{main:theo} using a bootstrap argument.
 We start with a technical result that will allow us to adjust the initial value 
 with $N$ free parameters.
 \begin{lemma}[The initial unstable mode]\label{chooini}
 Let $N\ge 2$. For $n\in \{1,\ldots,N\}$, let $\sigma_{n}=\pm 1$ and $-1<\ell_{n}<1$ 
 with $-1<\ell_{1}<\ell_{2}<\cdots<\ell_{N}<1$. There exist $L_{0}\gg 1$ and $0<\delta_{0}\ll 1$ such that the following is true.
 Let $y_{1}^{0}<\cdots<y_{N}^{0}$ be such that 
 \begin{equation*}
 L=\min (y^{0}_{n+1}-y^{0}_{n},n=1,\ldots,N-1)>L_{0},
 \end{equation*}
 and $\vec{\varepsilon}\in H^{1}\times L^{2}$, 
 $\boldsymbol{a}^{+}=(a^{+}_{n})_{n\in \{1,\ldots,N\}}\in \RR^{N} $ be such that
 \begin{equation}\label{est:ini}
 \|\vec{\varepsilon}\|_{\E}<\delta<\delta_{0}\quad \mbox{and}\quad \boldsymbol{a}^{+}\in \bar{B}_{\RR^{N}}(r)\quad 
 \mbox{where}\quad r=C_{0}^{\frac{3}{4}}\left(\delta^{2}+e^{-2\gamma_{0}L}\right)^{\frac{1}{2}},
 \end{equation}
 $C_{0}$ is defined in the bootstrap~\eqref{Bootset} and to be taken large enough.
 Then, there exist $\tilde{y}_{1}^{0}<\cdots<\tilde{y}_{N}^{0}$ and $h^{+}_{1},\ldots,h_{N}^{+}\in \RR$
 satisfying
 \begin{equation}\label{estdiff}
 \sum_{n=1}^{N}\left(|h^{+}_{n}|+|{\tilde{y}}^{0}_{n}-y_{n}^{0}|\right)\le C^{\frac{13}{16}}_{0}(\delta+e^{-\gamma_{0}L}),
 \end{equation}
 such that the initial value defined by 
 \begin{equation*}
 \vec{u}_{0}=\sum_{n=1}^{N}\left(\sigma_{n}\vec{Q}_{\ell_{n}}+h_{n}^{+}\vec{Z}_{\ell_{n}}^{+}
\right)(\cdot-y^{0}_{n})+\vec{\varepsilon}
 \end{equation*}
 rewrites as:
 \begin{equation}\label{modid}
 \vec{u}_{0}=\sum_{n=1}^{N}
 \sigma_{n}\vec{Q}_{\ell_{n}}(\cdot-\tilde{y}^{0}_{n})+\vec{\varphi}(0)
 \end{equation}
 where $\vec{\varphi}(0)$ satisfies for all $n=1,\ldots,N$,
 \begin{equation}\label{defa0}
 \left(\vec{\varphi}(0),\vec{Z}_{\ell_{n}}^{0}(\cdot-\tilde{y}^{0}_{n})\right)_{L^{2}}=0,\quad a^{+}_{n}(0)=\left(\vec{\varphi}(0),\vec{Z}_{\ell_{n}}^{+}(\cdot-\tilde{y}^{0}_{n})\right)_{L^{2}}=a^{+}_{n}.
 \end{equation}
Moreover, the initial data~\eqref{modid} is modulated in the sense of Lemma~\ref{le:decom}
with $y_{n}(0)=\tilde{y}_{n}^{0}$, for all $n=1,\ldots,N$.
 \end{lemma}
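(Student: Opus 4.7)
The plan is to set up the $2N$ scalar equations \eqref{defa0} as a system in the $2N$ unknowns $(\tilde y_n^0, h_n^+)_{n=1}^N$ and to invoke a quantitative implicit function theorem around the reference point $(\tilde y_n^0, h_n^+) = (y_n^0, 0)$ corresponding to $\vec\varepsilon = 0$ and $\boldsymbol a^+ = 0$. Introducing
\[
\vec\varphi(\boldsymbol{\tilde y}, \boldsymbol h) = \sum_{m=1}^N \sigma_m\bigl[\vec{Q}_{\ell_m}(\cdot-y_m^0) - \vec{Q}_{\ell_m}(\cdot - \tilde y_m)\bigr] + \sum_{m=1}^N h_m \vec{Z}^{+}_{\ell_m}(\cdot-y_m^0) + \vec\varepsilon,
\]
I define $\Phi : \RR^{2N} \to \RR^{2N}$ by $\Phi_n^0(\boldsymbol{\tilde y}, \boldsymbol h) = (\vec\varphi, \vec{Z}^{0}_{\ell_n}(\cdot-\tilde y_n))_{L^{2}}$ and $\Phi_n^+(\boldsymbol{\tilde y}, \boldsymbol h) = (\vec\varphi, \vec{Z}^{+}_{\ell_n}(\cdot-\tilde y_n))_{L^{2}}$, and seek to solve $\Phi(\boldsymbol{\tilde y}, \boldsymbol h) = (\mathbf 0, \boldsymbol a^+)$.

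The key step is to show that the Jacobian $D\Phi$ at $(\boldsymbol y^0, \mathbf 0)$ with $\vec\varepsilon = 0$ is invertible uniformly in the parameters. Using the identity $\vec{Z}^{0}_{\ell} = \partial_x \vec{Q}_{\ell}$ and the orthogonality $(\vec{Z}^{0}_{\ell}, \vec{Z}^{+}_{\ell})_{L^{2}} = 0$ from \eqref{idenZ}, the diagonal entries $\partial_{\tilde y_n}\Phi_n^0 = \sigma_n\|\vec{Z}^{0}_{\ell_n}\|_{L^{2}}^{2}$ and $\partial_{h_n}\Phi_n^+ = \|\vec{Z}^{+}_{\ell_n}\|_{L^{2}}^{2}$ are bounded away from zero, while the cross-entries $\partial_{h_n}\Phi_n^0$ and $\partial_{\tilde y_n}\Phi_n^+$ vanish by \eqref{idenZ}. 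All remaining entries involve inner products of functions localized at distinct centers $y_m^0, y_n^0$ with $|y_m^0 - y_n^0| \ge L$, hence are $O(e^{-\gamma_0 L})$ by the same cut-off argument used in the proof of \eqref{tech1}, based on the pointwise decay \eqref{Qdec} and \eqref{est:decayUpsilon}. Consequently $D\Phi|_{\mathrm{ref}}$ is diagonal up to $O(e^{-\gamma_0 L})$ corrections and is invertible with uniformly bounded inverse once $L_0$ is large enough.

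A contraction-mapping argument on the ball of radius $\rho := C_0^{13/16}(\delta + e^{-\gamma_0 L})$ in $(\boldsymbol{\tilde y} - \boldsymbol y^0, \boldsymbol h)$-space then produces a unique solution. Indeed the source term satisfies
\[
\bigl|\Phi(\boldsymbol y^0, \mathbf 0) - (\mathbf 0, \boldsymbol a^+)\bigr| \lesssim \|\vec\varepsilon\|_{\mathcal H} + |\boldsymbol a^+| \lesssim \delta + C_0^{3/4}(\delta + e^{-\gamma_0 L}) \lesssim C_0^{3/4}(\delta + e^{-\gamma_0 L}),
\]
and the quadratic corrections coming from Taylor expansion of $\vec Q_{\ell_m}(\cdot - \tilde y_m)$ and $\vec Z^{\bullet}_{\ell_n}(\cdot - \tilde y_n)$ are $O(\rho^2)$, which is $o(\rho)$ on this ball. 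Inverting the $O(1)$ Jacobian yields $\sum_n (|h_n^+| + |\tilde y_n^0 - y_n^0|) \lesssim C_0^{3/4}(\delta + e^{-\gamma_0 L})$, which is absorbed by the claimed bound \eqref{estdiff} once $C_0$ is taken large enough (since $C_0^{13/16 - 3/4} = C_0^{1/16} \to \infty$). The orthogonality and the normalization $a_n^+(0) = a_n^+$ required by \eqref{defa0} hold by construction; since $\rho \ll L$, the new centers remain strictly ordered with gaps $\ge \tfrac{3}{4} L$, ensuring that \eqref{modid} matches the decomposition produced by Lemma~\ref{le:decom}.

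The main obstacle is controlling the off-diagonal Jacobian entries uniformly in the speeds $\ell_n \in (-1,1)$: the eigenfunction $\Upsilon^+_{\ell,k}$ carries the exponential weight $e^{\ell\nu_0 x/\sqrt{1-\ell^2}}$, which \emph{grows} at infinity for $\ell > 0$. One must rely on the cancellation $\sqrt{1+\nu_0^2} - |\ell|\nu_0 \ge \sqrt{1-\ell^2}$ already established in the proof of \eqref{est:decayUpsilon} to recover the net exponential decay $|\vec Z^\pm_{\ell_n}(\cdot - y_n^0)(x)| \lesssim e^{-|x-y_n^0|}$, and hence the $O(e^{-\gamma_0 L})$ smallness of all the off-diagonal inner products at separation $L$, uniformly in the admissible range of speeds.
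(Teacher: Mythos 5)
Your proposal is correct and follows essentially the same route as the paper: the paper likewise sets up the $2N\times 2N$ system in $(h_{n}^{+},\tilde{y}_{n}^{0})$, shows via $(\vec{Z}_{\ell}^{0},\vec{Z}_{\ell}^{+})_{L^{2}}=0$, the decay \eqref{est:decayUpsilon} and the soliton-separation argument of \eqref{tech1} that the Jacobian at the reference configuration is block-diagonal up to $O(\|\vec{\varepsilon}\|_{\E}+|\Gamma-\Gamma^{0}|+e^{-2\gamma_{0}L})$, and then applies a uniform implicit function theorem to get $|\Gamma-\Gamma^{0}|\lesssim \delta+C_{0}^{\frac34}(\delta^{2}+e^{-2\gamma_{0}L})^{\frac12}$, absorbed into \eqref{estdiff} by taking $C_{0}$ large. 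Your contraction-mapping formulation is just a quantitative restatement of that implicit-function-theorem step, so the argument matches the paper's proof.
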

\begin{proof}
Let
\begin{equation*}
\Gamma^{0}=\left(\boldsymbol{0},y_{1}^{0},\ldots,y_{N}^{0}\right)\in \R^{2N},\quad 
\Gamma=\left(h_{1}^{+},\ldots,h_{N}^{+},\tilde{y}_{1}^{0},\ldots,\tilde{y}_{N}^{0}\right)\in \R^{2N}.
\end{equation*}
Consider the map
\begin{equation*}
\begin{aligned}
\Psi&:\quad X\to \R^{2N}\\
& \left(\vec{\varepsilon},\boldsymbol{a}^{+},\Gamma\right)\mapsto \left(\Psi^{a}_{1},\ldots,\Psi^{a}_{N},\Psi_{1}^{0},\ldots,\Psi^{0}_{N}\right),
\end{aligned}
\end{equation*}
where $X=\left(H^{1}\times L^{2}\right)\times\R^{N}\times \R^{2N} $, and for $n=1,\ldots,N$,
\begin{equation*}
\begin{aligned}
\Psi_{n}^{a}
&=\sum_{n'=1}^{N}\sigma_{n'}\left(\left(\vec{Q}_{\ell_{n'}}(\cdot-y^{0}_{n'})
-\vec{Q}_{\ell_{n'}}(\cdot-\tilde{y}^{0}_{n'})\right),\vec{Z}_{\ell_{n}}^{+}(\cdot-\tilde{y}^{0}_{n})\right)_{L^{2}}\\
 &\quad +\sum_{n'=1}^{N}h_{n'}^{+}\left(\vec{Z}_{\ell_{n'}}^{+}(\cdot-y^{0}_{n'}),\vec{Z}_{\ell_{n}}^{+}(\cdot-\tilde{y}^{0}_{n})\right)_{L^{2}}+\left(\vec{\varepsilon},\vec{Z}_{\ell_{n}}^{+}(\cdot-\tilde{y}^{0}_{n})\right)_{L^{2}}-a^{+}_{n},
\end{aligned}
\end{equation*}
\begin{equation*}
\begin{aligned}
 \Psi_{n}^{0}
 &=\sum_{n'=1}^{N}\sigma_{n'}\left(\left(\vec{Q}_{\ell_{n'}}(\cdot-y^{0}_{n'})
 -\vec{Q}_{\ell_{n'}}(\cdot-\tilde{y}^{0}_{n'})\right),\vec{Z}_{\ell_{n}}^{0}(\cdot-\tilde{y}^{0}_{n})\right)_{L^{2}}\\
 &\quad +\sum_{n'=1}^{N}h_{n'}^{+}\left(\vec{Z}_{\ell_{n'}}^{+}(\cdot-y^{0}_{n'}),\vec{Z}_{\ell_{n}}^{0}(\cdot-\tilde{y}^{0}_{n})\right)_{L^{2}}+\left(\vec{\varepsilon},\vec{Z}_{\ell_{n}}^{0}(\cdot-\tilde{y}^{0}_{n})\right)_{L^{2}}.
\end{aligned}
\end{equation*}
From~\eqref{modid},
\begin{equation}\label{eq:vp0}
\vec{\varphi}(0)=\sum_{n=1}^{N}\sigma_{n}\left(\vec{Q}_{\ell_{n}}(\cdot-y^{0}_{n})
-\vec{Q}_{\ell_{n}}(\cdot-\tilde{y}^{0}_{n})\right)+\sum_{n=1}^{N}h_{n}^{+}\vec{Z}_{\ell_{n}}^{+}(\cdot-y^{0}_{n})+\vec{\varepsilon},
\end{equation} 
and thus the set of conditions in~\eqref{defa0} is equivalent to $\Psi\left(\vec{\varepsilon},\boldsymbol{a}^{+},\Gamma\right)=\boldsymbol{0}\in \R^{2N}$.
We solve this nonlinear system by the Implicit Function Theorem. First, it is easy to check that 
\begin{equation*}
\Psi(\vec{0},\boldsymbol{0},\Gamma^{0})=0.
\end{equation*}
Second, by direct computation and integration by parts, for any $(\vec{\varepsilon},\boldsymbol{a}^{+},\Gamma)$ around $\left(\vec{0},\boldsymbol{0},\Gamma^{0}\right)$,
\begin{equation*}
D_{\Gamma}\Psi(\vec{\varepsilon},\boldsymbol{a}^{+},\Gamma)=\left(\begin{array}{cc}
A& C\\
B& D
\end{array}\right),
\end{equation*}
where
\begin{align*}
A&=\left(\bigg(\vec{Z}_{\ell_{n}}^{+}(\cdot-y^{0}_{n}),\vec{Z}_{\ell_{n'}}^{+}(\cdot-{y}^{0}_{n'})\bigg)_{L^{2}}\right)_{n,n'\in\{1,\ldots,N\}}+O\left(\|\vec{\varepsilon}\|_{\E}+|\Gamma-\Gamma^{0}|\right),\\
 B&=\left(\bigg(\sigma_{n}\vec{Z}_{\ell_{n}}^{0}(\cdot-y^{0}_{n}),\vec{Z}_{\ell_{n'}}^{+}(\cdot-{y}^{0}_{n'})\bigg)_{L^{2}}\right)_{n,n'\in\{1,\ldots,N\}}+O\left(\|\vec{\varepsilon}\|_{\E}+|\Gamma-\Gamma^{0}|\right),\\
 C&=\left(\bigg(\sigma_{n}\vec{Z}_{\ell_{n}}^{0}(\cdot-y^{0}_{n}),\vec{Z}_{\ell_{n'}}^{+}(\cdot-{y}^{0}_{n'})\bigg)_{L^{2}}\right)_{n,n'\in\{1,\ldots,N\}}+O\left(\|\vec{\varepsilon}\|_{\E}+|\Gamma-\Gamma^{0}|\right),\\
 D&=\left(\bigg(\sigma_{n}\vec{Z}_{\ell_{n}}^{0}(\cdot-y^{0}_{n}),\sigma_{n'}\vec{Z}_{\ell_{n'}}^{0}(\cdot-{y}^{0}_{n'})\bigg)_{L^{2}}\right)_{n,n'\in\{1,\ldots,N\}}+O\left(\|\vec{\varepsilon}\|_{\E}+|\Gamma-\Gamma^{0}|\right).
 \end{align*}
 Moreover, from $\left(\vec{Z}_{\ell}^{0},\vec{Z}_{\ell}^{+}\right)_{L^{2}}=0$ and similar argument as in the proof of~\eqref{tech1}, we obtain
\begin{align*}
 A&={\rm{diag}}\left(\left(\vec{Z}_{\ell_{1}}^{+},\vec{Z}_{\ell_{1}}^{+}\right)_{L^{2}},\ldots, \left(\vec{Z}_{\ell_{N}}^{+},\vec{Z}_{\ell_{N}}^{+}\right)_{L^{2}}\right)+
 O\left(\|\vec{\varepsilon}\|_{\E}+|\Gamma-\Gamma^{0}|+e^{-2\gamma_{0}L}\right),\\
 B&=O\left(\|\vec{\varepsilon}\|_{\E}+|\Gamma-\Gamma^{0}|+e^{-2\gamma_{0}L}\right),\\ 
 C&=O\left(\|\vec{\varepsilon}\|_{\E}+|\Gamma-\Gamma^{0}|+e^{-2\gamma_{0}L}\right),\\
 D&={\rm{diag}}\left(\left(\vec{Z}_{\ell_{1}}^{0},\vec{Z}_{\ell_{1}}^{0}\right)_{L^{2}},\ldots, \left(\vec{Z}_{\ell_{N}}^{0},\vec{Z}_{\ell_{N}}^{0}\right)_{L^{2}}\right)+O\left(\|\vec{\varepsilon}\|_{\E}+|\Gamma-\Gamma^{0}|+e^{-2\gamma_{0}L}\right).
 \end{align*}
 Thus, $D_{\Gamma}\Psi(\vec{\varepsilon},\boldsymbol{a}^{+},\Gamma)$ is an invertible matrix for $L>L_{0}$ large enough, with a lower bound uniform around $\left(\vec{0},\boldsymbol{0},\Gamma^{0}\right)$. Therefore, by the uniform variant of the implicit function theorem, there exist $0<\delta_{1}\ll 1$
 (independent with the choose of $y_{1}^{0}<\cdots<y_{N}^{0}$) and $C^{1}$ map 
 \begin{equation*}
 \Pi:\ B_{H^{1}\times L^{2}}(\vec{0},\delta_{1})\times B_{\R^{N}}(\boldsymbol{0},\delta_{1})
 \to B_{\R^{2N}}(\Gamma^{0},\delta_{1}),
 \end{equation*}
 such that for all $\vec{\varepsilon}\in B_{H^{1}\times L^{2}}(\vec{0},\delta_{1})$,
 $\boldsymbol{a}^{+}\in B_{\R^{N}}(\boldsymbol{0},\delta_{1})$ and $\Gamma\in B_{\R^{2N}}(\Gamma^{0},\delta_{1})$,
 \begin{equation*}
 \Psi\left(\vec{\varepsilon},\boldsymbol{a}^{+},\Gamma\right)=0\quad\mbox{if and only if}\quad 
 \Gamma=\Pi\left(\vec{\varepsilon},\boldsymbol{a}^{+}\right).
 \end{equation*}
 Moreover, taking $0<\delta<\delta_{0}\ll 1$ small enough, $L>L_{0}$ large enough, for any $(\vec{\varepsilon},\boldsymbol{a}^{+})$ satisfying~\eqref{est:ini}, we have 
 \begin{equation*}
 \left|\Gamma-\Gamma^{0}\right|=\left|\Pi\left(\vec{\varepsilon},\boldsymbol{a}^{+}\right)-\Pi\left(\vec{0},\boldsymbol{0}\right)\right|\lesssim \|\vec{\varepsilon}\|_{H^{1}\times L^{2}}+\|\boldsymbol{a}^{+}\|
 \lesssim \delta+C_{0}^{\frac{3}{4}}\left(\delta^{2}+e^{-2\gamma_{0}L}\right)^{\frac{1}{2}},
 \end{equation*}
 which implies~\eqref{estdiff} for taking $C_{0}$ large enough (independent with $\delta_{0}$ and $L_{0}$). 
 The proof of Lemma~\ref{chooini} is complete.
\end{proof}

We are in a position to complete the proof of Theorem~\ref{main:theo}.
\begin{proof}[Proof of Theorem~\ref{main:theo}]
 Let $\vec{\varepsilon}\in H^1\times L^2$ 
 and $y_{1}^{0}<\cdots<y_{N}^{0}$ as in the statement of Theorem~\ref{main:theo}.
 For all 
$\boldsymbol{a}^{+}(0)=\boldsymbol{a}^{+}=(a^{+}_{1},\ldots,a^{+}_{N})\in \bar{B}_{\RR}(r)$, we consider the solution $\vec{u}=(u_{1},u_{2})$ with the initial data
as defined in Lemma~\ref{chooini}
\begin{equation}\label{def:u0}
\vec{u}_{0}=\sum_{n=1}^{N}\left(\sigma_{n}\vec{Q}_{\ell_{n}}+h_{n}^{+}\vec{Z}_{\ell_{n}}^{+}
\right)(\cdot-y^{0}_{n})+\vec{\varepsilon}=\sum_{n=1}^{N}
\sigma_{n}\vec{Q}_{\ell_{n}}(\cdot-\tilde{y}^{0}_{n})+\vec{\varphi}(0).
\end{equation}
Note that, for the proof of Theorem~\ref{main:theo}, we just need to prove the existence of $\vec{u}_{0}$ such that $T_{*}(\vec{u}_{0})=\infty$. We start by closing all bootstrap estimates except the one for the instable modes.
Last, we prove the existence of suitable parameters $\boldsymbol{a}^{+}=(a_{1}^{+},\ldots,a_{N}^{+})$ using a topological argument. Denote $q_{0}=\min(3,q+1)$.

{\bf{Step 1.}} Closing the estimates in $\vec{\varphi}$. First, from~\eqref{estdiff} and~\eqref{def:u0},
\begin{equation}\label{est:vp0}
\begin{aligned}
\|\vec{\varphi}(0)\|^{2}_{\E}
&\lesssim \sum_{n=1}^{N}\|\vec{Q}_{\ell_{n}}(\cdot-y_{n}^{0})-\vec{Q}_{\ell_{n}}(\cdot-\tilde{y}_{n}^{0})\|_{\E}^{2}
+\sum_{n=1}^{N}\|h_{n}^{+}\vec{Z}_{\ell_{n}}^{+}(\cdot-y^{0}_{n})\|_{\E}^{2}+\|\vec{\varepsilon}\|_{\E}^{2}\\
&\lesssim \|\vec{\varepsilon}\|^{2}_{\E}+ \sum_{n=1}^{N}\left(|h^{+}_{n}|^{2}+|{\tilde{y}}^{0}_{n}-y_{n}^{0}|^{2}\right)\lesssim C^{\frac{13}{8}}_{0}(\delta^{2}+e^{-2\gamma_{0}L}).
\end{aligned}
\end{equation} 
Second, from~\eqref{mon:I}, conservation of energy $E(\vec{u}(t))$ and momentum $\mathcal{I}(\vec{u}(t))$, for any $t\in [0,T_{*}(\vec{u}_{0}(0))]$,
\begin{align*}
\mathcal{E}(\vec{u}(t))- \mathcal{E}(\vec{u}(0))
&=\sum_{n=2}^{N}c_{n}\left(\mathcal{J}_{n}(t)-\mathcal{J}_{n}(0)\right)\\
&\le C_{1}\left(\sum_{n=2}^{N}c_{n}\right)\left(\frac{1}{L^{2\alpha-1}}\sup_{s\in[0,t]}\|\vec{\varphi}(s)\|^{2}_{\E}+e^{-3\gamma_{0}L}\right).
\end{align*}
Note that, from~\eqref{expan:E},
\begin{align*}
&\sum_{n=1}^{N}\tilde{c}_{n}H_{n}(\vec{\varphi}(t),\vec{\varphi}(t))\\
&\quad =\mathcal{E}(\vec{u}(t))- \mathcal{E}(\vec{u}(0))
+O\left(\frac{\|\vec{\varphi}(t)\|^{2}_{\E}}{L}+
\|\vec{\varphi}(t)\|_{\E}^{q_{0}}+e^{-3\gamma_{0} L}+\|\vec{\varphi}(0)\|_{\E}^{2}\right).
\end{align*}
Therefore, using~\eqref{Bootset},~\eqref{coer:HN},~\eqref{est:vp0} and $\frac{1}{2}<\alpha<\frac{4}{7}$,
\begin{align*}
\mu\|\vec{\varphi}(t)\|_{\E}^{2}
& \lesssim \frac{1}{L^{2\alpha-1}}\sup_{s\in [0,t]}\|\vec{\varphi}(s)\|_{\E}^{2}
+\frac{1}{L}\|\vec{\varphi}(t)\|_{\E}^{2}+\|\vec{\varphi}(t)\|_{\E}^{q_{0}}
\\
&\quad +\left(C_{0}^{\frac{3}{2}}+C_{0}^{\frac{7}{4}}+C_{0}^{\frac{13}{8}}\right)(\delta^{2}+e^{-2\gamma_0L})+e^{-3\gamma_0 L}\\
& \lesssim \left(\frac{C^{2}_{0}}{L^{2\alpha-1}}+C_{0}^{\frac{7}{4}}\right)\left(\delta^{2}+e^{-2\gamma_{0}L}\right)+C_{0}^{q_{0}}\left(\delta^{q_{0}}+e^{-q_{0}\gamma_{0}L}\right)+e^{-3\gamma_0 L}.
\end{align*}
This strictly improves the estimate on $\vec{\varphi}$ in~\eqref{Bootset} for taking $L$, $C_{0}$ large enough and $\delta$ small enough.

{\bf{Step 2.}} Closing the estimates in $\min_{n}(y_{n+1}-y_{n})$. From~\eqref{est:yt},~\eqref{estdiff} and $\gamma_{1}=\frac{1}{8}\min_{n}(\ell_{n+1}-\ell_{n})$, for any $n=1,\ldots,N-1$ and $t\in[0,T_{*}(\vec{u}_{0})]$,
\begin{align*}
y_{n+1}(t)-y_{n}(t)&\ge (\ell_{n+1}-\ell_{n})t+(y_{n+1}^{0}-y^{0}_{n})-2\gamma_{1}t-|\tilde{y}_{n+1}^{0}-y^{0}_{n+1}|-|\tilde{y}_{n}^{0}-y^{0}_{n}|\\
&\ge 8\gamma_{1}t+L-2\gamma_{1}t+O\left(C_{0}^{\frac{13}{16}}(\delta+e^{-\gamma_0 L})\right),
\end{align*}
which strictly improves the estimate on $ \min_{n}(y_{n+1}-y_{n})$ in~\eqref{Bootset} for taking $L$ large enough and $\delta$ small enough.

{\bf{Step 3.}} Closing the estimates in $\boldsymbol{a}^{-}=(a^{-}_{n})_{n\in \{1,\ldots,N\}}$. Note that, from~\eqref{est:vp0},
\begin{equation*}
\sum_{n=1}^{N}(a^{-}_{n}(0))^{2}\lesssim \|\vec{\varphi}(0)\|_{\E}^{2}\lesssim C^{\frac{13}{8}}_{0}(\delta^{2}+e^{-2\gamma_{0}L}).
\end{equation*}
By direct computation,~\eqref{ode:z},~\eqref{Bootset} and~\eqref{est:theta}, for $n=1,\ldots,N$,
\begin{align*}
\frac{\d}{\d t}\left(e^{2\alpha_{n}t}(a^{-}_{n}(t))^{2}\right)
&=2e^{2\alpha_{n}t}a_{n}^{-}(t)\left(\frac{\d}{\d t}a^{-}_{n}(t)+\alpha_{n}a^{-}_{n}(t)\right)\\
&=e^{2\alpha_{n}t}O\left(|a_{n}^{-}|^{q_{0}}+\|\vec{\varphi}\|_{\E}^{q_{0}}+\theta\right)\\
&=e^{2\alpha_{n}t}O\left(C_{0}^{q_{0}}\left(\delta^{q_{0}}+e^{-q_{0}\gamma_{0}L}\right)+e^{-3\gamma_{0} L}\right).
\end{align*}
Integrating on $[0,t]$, for any $t\in [0,T_{*}(\vec{u}_{0})]$ and any $n=1,\ldots,N$, we obtain
\begin{align*}
(a_{n}^{-}(t))^{2}
&\lesssim e^{-2\alpha_{n}t}(a_{n}^{-}(0))^{2}+
e^{-2\alpha_{n}t}\int_{0}^{t}e^{2\alpha_{n}s}\left(C_{0}^{q_{0}}\left(\delta^{q_{0}}+e^{-q_{0}\gamma_{0}L}\right)+e^{-3\gamma_{0} L}\right)\d s\\
&\lesssim C^{\frac{13}{8}}_{0}(\delta^{2}+e^{-2\gamma_{0}L})+C_{0}^{q_{0}}\left(\delta^{q_{0}}+e^{-q_{0}\gamma_{0}L}\right)
+e^{-3\gamma_{0} L},
\end{align*}
which strictly improves the estimate on ${\boldsymbol{a}}^{-}=(a^{-}_{n})_{n\in \{1,\ldots,N\}}$ in~\eqref{Bootset} for taking $L$ large enough and $\delta$ small enough.

{\bf{Step 4.}} Final argument on the unstable parameters. Let
\begin{equation*}
b(t)=\sum_{n=1}^{N}(a^{+}_{n}(t))^{2}\quad \mbox{and}\quad \bar{\alpha}=\min_{n}\alpha_{n}.
\end{equation*}
We claim, for any time $t\in [0,T_{*}(\vec{u}_{0})]$ where it holds $b(t)=C_{0}^{\frac{3}{2}}\left(\delta^{2}+e^{-2\gamma_{0}L}\right)$, the following transversality property holds,
\begin{equation}\label{tran}
\frac{\d}{\d t}b(t)\ge \bar{\alpha}C_{0}^{\frac{3}{2}}\left(\delta^{2}+e^{-2\gamma_{0}L}\right)>0.
\end{equation}
 From~\eqref{ode:z},~\eqref{Bootset} and~\eqref{est:theta}, taking $C_{0}$ large enough, for any time $t\in [0,T_{*}]$ where it holds $b(t)=C_{0}^{\frac{3}{2}}\left(\delta^{2}+e^{-2\gamma_{0}L}\right)$, 
 \begin{equation*}
 \begin{aligned}
 \frac{\d}{\d t}b(t)
 &=2\sum_{n=1}^{N}\alpha_{n}(a^{+}_{n}(t))^{2}+O\left[\bigg(\sum_{n=1}^{N}|a_{n}^{+}(t)|\bigg)\bigg(\|\vec{\varphi}\|_{\E}^{q_{0}-1}+\theta\bigg)\right]\\
 &=2\sum_{n=1}^{N}\alpha_{n}(a^{+}_{n}(t))^{2}+O\left(\sum_{n=1}^{N}|a_{n}^{+}|^{q_{0}}+
 \|\vec{\varphi}\|_{\E}^{q_{0}}+e^{-3\gamma_{0}L}\right)\\
 &\ge 2\bar{\alpha}C_{0}^{\frac{3}{2}}\left(\delta^{2}+e^{-2\gamma_{0}L}\right)
 -C_{0}^{q_{0}+1}\left(\delta^{q_{0}}+e^{-q_{0}\gamma_{0}L}\right)-C_{0}e^{-3\gamma_0 L},
 \end{aligned}
 \end{equation*}
 which implies~\eqref{tran} for taking $\delta$ small enough and $L$ large enough (depending on $C_{0}$). The transversality relation~\eqref{tran} is enough to justify the 
 existence of at least a couple 
 ${\boldsymbol{a}}^{+}(0)=(a^{+}_{1}(0),\ldots,a^{+}_{N}(0))\in \bar{\mathcal{B}}_{\RR^{N}}(r)$ such that $T_{*}(\vec{u}_{0})=\infty$ where $r=C_{0}^{\frac{3}{4}}\left(\delta^{2}+e^{-2\gamma_{0}L}\right)^{\frac{1}{2}}$.

The proof is by contradiction, we assume that for all $\boldsymbol{a}^{+}(0)\in \bar{\mathcal{B}}_{\RR^{N}}(r)$, it holds $T_{*}(\vec{u}_{0})<\infty$. Then, a contradiction follows from the following discussion (see for instance more details in~\cite{CMM} and~\cite[Section 3.1]{CMkg}).

\emph{Continuity of $T_{*}(\vec{u}_{0})$.} The above transversality condition~\eqref{tran} implies that the map
\begin{equation*}
\boldsymbol{a}^{+}(0)\in \bar{\mathcal{B}}_{\RR^{N}}(r)\mapsto T_{*}(\vec{u}_{0})\in [0,\infty)
\end{equation*}
is continuous and 
\begin{equation*}
T_{*}(\vec{u}_{0})=0\quad \mbox{for}\quad \boldsymbol{a}^{+}(0)\in \mathcal{S}_{\RR^{N}}\left(r\right).
\end{equation*}

\emph{Construction of a retraction}. We define
\begin{align*}
\mathcal{M}:\ \bar{\mathcal{B}}_{\RR^{N}}(r)&\mapsto {\mathcal{S}}_{\RR^{N}}(r)\\
\boldsymbol{a}^{+}(0)&\mapsto \boldsymbol{a}^{+}(T_{*}(\vec{u}_{0})).
\end{align*}
From what precedes, $\mathcal{M}$ is continuous. Moreover, $\mathcal{M}$ restricted to
${\mathcal{S}}_{\RR^{N}}(r)$ is the identity. The
existence of such a map is contradictory with the no retraction theorem for continuous maps from the ball to
the sphere.

\smallskip

We have proved the existence of $\boldsymbol{a}^{+}(0)\in \bar{\mathcal{B}}_{\RR^{N}}(r)$, associated to a global solution $\vec{u}=(u_{1},u_{2})$ of~\eqref{NLKGvec} with initial data defined in Lemma~\ref{chooini}, which also satisfies~\eqref{estQ0} and~\eqref{Bootset} for all $t\in [0,\infty)$. The proof of Theorem~\ref{main:theo} is  thus complete.
\end{proof}


\begin{thebibliography}{10}
\bibitem{BGS} F. Béthuel, P. Gravejat and D. Smets,
Stability in the energy space for chains of solitons of the one-dimensional Gross-Pitaevskii equation.
 \emph{Ann. Inst. Fourier} \textbf{64} (2014), no. 1, 19--70.
 
 \bibitem{BL}H.~Berestycki and P.-L.~Lions, 
 Nonlinear scalar field equations. I. Existence of
a ground state. 
\emph{Arch. Rational Mech. Anal.} \textbf{82}, (1983), no.~4, 313–345.

\bibitem{BP}V.~S.~Buslaev and G.~S.~Perelman, 
On the stability of solitary waves for nonlinear Schr\"odinger equations.
\emph{Nonlinear evolution equations}, 75–98, 
Amer. Math. Soc. Transl. Ser. 2, \textbf{164}, Adv. Math. Sci., 22, 
\emph{Amer. Math. Soc., Providence, RI,} 1995.

\bibitem{Com} V.~Combet,
Multi-soliton solutions for the supercritical gKdV equations. 
\emph{Comm. Partial Differential Equations} \textbf{36} (2011), no. 3, 380–419.

\bibitem{CP} A. Contreras and D. Pelinovsky,
Stability of multi-solitons in the cubic NLS equation.
\emph{J. Hyperbolic Differ. Equ.} \textbf{11} (2014), no. 2, 329--353.

\bibitem{CMM} R.~C\^ote,~Y.~Martel and~F.~Merle,
Construction of multi-soliton solutions for the $L^{2}$-supercritical gKdV and NLS equations.
\emph{Rev. Mat. Iberoamericana} \textbf{27} (2011), no.~1, 273-302.

\bibitem{CMkg} R.~C\^ote and C.~Mu\~noz, 
Multi-solitons for nonlinear Klein-Gordon equations.
\emph{Forum of Mathematics}, Sigma \textbf{2} (2014), Paper No. e15, 38 pp.

\bibitem{GVKG} J.~Ginibre and G.~Velo, 
The global Cauchy problem for the nonlinear Klein-Gordon equation.
\emph{Math. Z.} \textbf{189} (1985), no. 4, 487—505.

\bibitem{KNS} J.~Krieger, K.~Nakanishi and W.~Schlag, 
Center-stable manifold of the ground state in the energy space for the critical wave equation.
\emph{Math. Ann} \textbf{361} (2015), no. 1-2, 1-50.

\bibitem{LCW} S.~ Le Coz and Y.~ Wu,
Stability of multisolitons for the derivative nonlinear Schr\"odinger equation. 
\emph{Int. Math. Res. Not. IMRN} 2018, no. 13, 4120–4170.

\bibitem{Ma} Y. Martel, 
Asymptotic $N$-soliton-like solutions of the subcritical and critical generalized Korteweg-de Vries equations.
 \emph{Amer. J. Math.} \textbf{127} (2005), no.~5, 1103--1140.

\bibitem{MMnls} Y. Martel and F. Merle, 
Multi solitary waves for nonlinear Schr\"odinger equations.
\emph{Ann. Inst. H. Poincar\'e Anal. Non Lin\'eaire} {\bf 23} (2006), no.~6, 849--864.

\bibitem{MMwave} Y.~Martel and F.~Merle,
Construction of multi-solitons for the energy-critical wave equation in dimension 5. 
\emph{Arch. Ration. Mech. Anal.} \textbf{222} (2016), no. 3, 1113–1160.

\bibitem{MMTgkdv} Y.~Martel, F.~Merle and T.-P. Tsai,
Stability and asymptotic stability in the energy space of the sum of $N$ solitons for subcritical gKdV equations.
\emph{Comm. Math. Phys.} \textbf{231} (2002), no.~2, 347-373.

\bibitem{MMTSchor} Y.~Martel, F.~Merle and T.-P. Tsai,
Stability in $H^{1}$ of the sum of $K$ solitary waves for some nonlinear Schr\"odinger equations. 
\emph{Duke Math. J.} \textbf{133} (2006), no. 3, 405–466.

\bibitem{MTX} C.~Miao, X.~Tang and G.~Xu,
Stability of the traveling waves for the derivative Schr\"odinger equation in the energy space.
\emph{Calc. Var. Partial Differential Equations} \textbf{56} (2017), no. 2, Paper No.~45, 48 pp.

\bibitem{NSEMS} K.~Nakanishi and W.~Schlag, 
Invariant manifolds and dispersive Hamiltonian evolution equations. Zurich Lectures in Advanced Mathematics. \emph{European Mathematical Society} (EMS), Zürich, 2011.

\bibitem{NS} K.~Nakanishi and W.~Schlag, 
 Global dynamics above the ground state for the nonlinear Klein-Gordon equation without a radial assumption. \emph{Arch. Ration. Mech. Anal.} \textbf{203} (2012), no. 3, 809–851. 

\bibitem{GP1} G.~Perelman,
Some results on the scattering of weakly interacting solitons for
nonlinear Schr\"odinger equations. 
\emph{Spectral Theory, Microlocal Analysis, Singular Manifolds}, 78-137,
Math. Top., 14, Adv. Partial Differential Equations, \emph{Akademie Verlag, Berlin}, 1997.

\bibitem{RSS} I. Rodnianski, W. Schlag, A. Soffer,
Asymptotic stability of $N$-soliton states of NLS. Preprint
arXiv:math/0309114.

\bibitem{Xwave} X.~ Yuan,
On multi-solitons for the energy-critical wave equation in dimension 5.
\emph{Nonlinearity} \textbf{32} (2019), no. 12, 5017–5048.
\end{thebibliography}
\end{document}